\newtheorem{thm}{Theorem}[section]
\newtheorem{prop}[thm]{Proposition}
\newtheorem{defn}[thm]{Definition}
\newtheorem{lem}[thm]{Lemma}
\newtheorem{cor}[thm]{Corollary}
\newtheorem{conj}[thm]{Conjecture}
\newtheorem{rem}[thm]{Remark}
\newtheorem*{thmn}{Theorem}
\newcommand{\subsubsubsection}{\@startsection{paragraph}{4}{\z@}%
 {1.0\Cvs \@plus.5\Cdp \@minus.2\Cdp}%
 {.1\Cvs \@plus.3\Cdp}%
 {\reset@font\sffamily\normalsize}
 }
\newcommand{\cf}{\textit{cf.\ }}
\DeclareMathOperator{\Gal}{Gal}
\DeclareMathOperator{\id}{id}
\DeclareMathOperator{\Hom}{Hom}
\DeclareMathOperator{\Rep}{Rep}
\DeclareMathOperator{\Mod}{Mod}
\DeclareMathOperator{\Aut}{Aut} 
\DeclareMathOperator{\Fil}{Fil} 
\DeclareMathOperator{\Ind}{Ind}
\DeclareMathOperator{\Spa}{Spa}
\DeclareMathOperator{\Lie}{Lie}
\DeclareMathOperator{\GL}{GL}
\DeclareMathOperator{\basic}{basic}
\DeclareMathOperator{\supp}{supp}
\DeclareMathOperator{\IC}{IC}
\DeclareMathOperator{\Bun}{Bun}
\DeclareMathOperator{\Perf}{Perf}
\DeclareMathOperator{\Hecke}{Hecke}
\DeclareMathOperator{\Gr}{Gr}
\DeclareMathOperator{\diag}{diag}
\DeclareMathOperator{\gr}{gr}
\DeclareMathOperator{\sch}{sch}
\DeclareMathOperator{\Div}{Div}
\DeclareMathOperator{\et}{et}
\DeclareMathOperator{\Frob}{Frob}
\DeclareMathOperator{\rk}{rk}
\DeclareMathOperator{\Isom}{Isom}
\DeclareMathOperator{\lis}{lis}
\newcommand{\bA}{\mathbb{A}}
\newcommand{\bB}{\mathbb{B}}
\newcommand{\bC}{\mathbb{C}}
\newcommand{\bD}{\mathbb{D}}
\newcommand{\bF}{\mathbb{F}}
\newcommand{\bG}{\mathbb{G}}
\newcommand{\bQ}{\mathbb{Q}}
\newcommand{\bZ}{\mathbb{Z}}
\newcommand{\cC}{\mathcal{C}}
\newcommand{\cD}{\mathcal{D}}
\newcommand{\cF}{\mathcal{F}}
\newcommand{\cM}{\mathcal{M}}
\newcommand{\cO}{\mathcal{O}}
\newcommand{\cP}{\mathcal{P}}
\newcommand{\cS}{\mathcal{S}}
\newcommand{\cT}{\mathcal{T}}
\newcommand{\fm}{\mathfrak{m}}
\newcommand{\sE}{\mathscr{E}}
\newcommand{\sF}{\mathscr{F}}
\newcommand{\sT}{\mathscr{T}}
\newcommand{\sU}{\mathscr{U}}
\newcommand{\ol}{\overline}
\newcommand{\ul}{\underline}
\newcommand{\wh}{\widehat}
\newcommand{\wt}{\widetilde}
\newcommand{\lra}{\longrightarrow}
\newcommand{\xra}{\xrightarrow}
\newcommand{\ola}{\overleftarrow}
\newcommand{\ora}{\overrightarrow}
\newcommand{\solid}{\mathsmaller{\blacksquare}}
\begin{document}

\title{Non-semi-stable loci in Hecke stacks and Fargues' conjecture}
\author{Ildar Gaisin and Naoki Imai}
\date{}
\maketitle

\begin{abstract}
We show the Harris--Viehmann conjecture 
under some Hodge--Newton reducibility condition 
for a generalization of the diamond of 
a non-basic Rapoport--Zink space at infinite level, 
which appears as a cover of the non-semi-stable locus 
in the Hecke stack. 
We show also that 
the cohomology of the non-semi-stable locus 
with coefficient coming from a cuspidal Langlands parameter 
vanishes. 
As an application, we show the Hecke eigensheaf property in 
Fargues' conjecture for cuspidal Langlands parameters 
in the $\GL_2$-case. 
\end{abstract}

\section*{Introduction}
In \cite{FarGover}, Fargues formulated a conjecture on 
a geometrization of the local Langlands correspondence 
motivated by a formulation of the geometric Langlands conjecture 
in \cite{FGVGeo}.  

Let $E$ be a $p$-adic number field with 
residue field $\bF_q$. 
Let $G$ be a quasi-split reductive group over $E$. 
Then we can define a moduli stack $\Bun_G$ 
of $G$-bundle on the Fargues--Fontaine curve, 
and a moduli $\Div_X^1$ of Cartier divisors of degree $1$ on 
the Fargues--Fontaine curve. 
Further, we have a diagram 
\[
 \xymatrix{
 & \Hecke^{\leq \mu} 
 \ar@{->}[ld]_-{\ola{h}} \ar@{->}[rd]^-{\ora{h}} & \\ 
 \Bun_G  & & 
 \Bun_G \times \Div_X^1 , 
 } 
\]
where 
$\Hecke^{\leq \mu}$ 
is a moduli stack of modifications of $G$-bundle on 
the Fargues--Fontaine curve 
with some condition determined by 
a cocharacter $\mu$ of $G$, 
which is called a Hecke stack. 
For a discrete Langlands parameter 
$\varphi \colon W_E \to {}^L G$, 
Fargues' conjecture predicts 
the existence of a sheaf $\sF_{\varphi}$ 
on $\Bun_{G}$ 
satisfying some conditions, 
the most intriguing one of which is 
the Hecke eigensheaf property 
\[
 \ora{h}_{\natural} (\ola{h}^* 
 \sF_{\varphi} \otimes \IC_{\mu}') = 
 \sF_{\varphi} \boxtimes (r_{\mu} \circ \varphi ) , 
\]
where 
$r_{\mu}$ 
is a representation of ${}^L G$ 
determined by $\mu$, and 
$\IC_{\mu}'$ is an object of the derived category of sheaves determined by $\mu$ via the geometric Satake 
correspondence. 
The conjecture is stated based on some conjectural objects. 
However, in the case $\varphi$ is cuspidal and $\mu$ is minuscule, 
we can define every object in the conjecture 
assuming only the local Langlands correspondence, 
which is constructed in many cases.

Assume that 
$\varphi$ is cuspidal and $\mu$ is minuscule. 
Then the support of the sheaf 
$\sF_{\varphi}$ is contained in 
the semi-stable locus $\Bun_{G}^{\mathrm{ss}}$ of 
$\Bun_{G}$. 
The Hecke eigensheaf property then predicts 
that 
\[
 \supp \ora{h}_{\natural} (\ola{h}^* 
 \sF_{\varphi} \otimes \IC_{\mu}') \subset 
 \Bun_{G}^{\mathrm{ss}} \times \Div_X^1 . 
\]
This is non-trivial since the inclusion 
\[
 \ola{h}^{-1} \bigl( \Bun_{G}^{\mathrm{ss}} \bigr)  
 \subset 
 \ora{h}^{-1} \bigl( \Bun_{G}^{\mathrm{ss}} 
 \times \Div_X^1 \bigr) 
\]
does not hold. 
The vanishing of 
$\ora{h}_{\natural} (\ola{h}^* \sF_{\varphi} \otimes \IC_{\mu}')$ 
outside the semi-stable locus 
involves geometry of a non-semi-stable locus of 
the Hecke stack $\Hecke^{\leq \mu}$. 

One aim of this paper is to give 
a partial result in this direction. 
Assume that $\varphi$ is cuspidal, 
but $\mu$ can be general in the following. 
Let $B(G)$ be the set of 
$\sigma$-conjugacy classes 
in $G(\breve{E})$, where 
$\breve{E}$ is the completion of 
the maximal unramified extension of $E$. 
Then we have a decomposition 
\[
 \Bun_{G} = 
 \coprod_{[b] \in B(G)} 
 \Bun_{G}^{[b]} 
\]
into strata, 
where the the strata corresponding to 
basic elements of $B(G)$ forms the 
semi-stable locus. 
Let $[b],[b'] \in B(G)$. 
We define 
$\Hecke_{[b],[b']}^{\leq \mu}$ 
by the fiber products 
\begin{equation*}
 \xymatrix{
 \Hecke_{[b],[b']}^{\leq \mu} \ar@{->}[r] \ar@{->}[dd] 
 & 
 \Hecke_{[b]}^{\leq \mu} \ar@{->}[r] \ar@{->}[d] & 
 \Bun_{G}^{[b]} \times \Div_X^1 
 \ar@{->}[d] \\
  & 
 \Hecke^{\leq \mu} \ar@{->}^-{\overrightarrow{h}}[r] 
 \ar@{->}^-{\overleftarrow{h}}[d] & 
 \Bun_{G} \times \Div_X^1 \\ 
 \Bun_{G}^{[b']} \ar@{->}[r] & 
 \Bun_{G}. &  
 }
\end{equation*}
We assume that $[b]$ is not basic, 
and $[b']$ is basic. 
Let $\Hecke_{[b],[b']}^{\mu}$ 
be an open substack of 
$\Hecke_{[b],[b']}^{\leq \mu}$, 
where the modifications have type $\mu$. 
We find that 
a generalization $\cM_{b,b'}^{\mu}$ of 
a diamond of a non-basic Rapoport--Zink space 
at infinite level 
covers 
$\Hecke_{[b],[b']}^{\mu}$. 

We can define a Levi subgroup $L^b$ of $G$ 
such that 
$[b]$ is an image of a basic element 
$[b_{00}]$ of $B(L^b)$. 
Take a proper Levi subgroup $L$ of $G$ 
containing $L^b$. 
Let $[b_0]$ be the image of $[b_{00}]$ 
in $B(L)$. 
We assume that 
$[b']$ is in the image of 
an element $[b'_0] \in B(L)$. 
Further, we assume that $([b],[b'],\mu)$ 
satisfies a twisted analogue of 
Hodge--Newton reducibility. 
Our main theorem is the following: 
\begin{thmn}
The compactly supported cohomology of 
$\cM_{b,b'}^{\mu}$ 
is a parabolic induction of 
the compactly supported cohomology of 
$\cM_{b_0,b'_0}^{\mu}$ 
with some degree shift and twist. 
\end{thmn}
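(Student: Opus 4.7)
The plan is to adapt the classical Harris--Viehmann strategy to the twisted infinite-level diamond setting of $\cM_{b,b'}^{\mu}$. The twisted HN-reducibility assumption on $([b],[b_0],\mu)$ is precisely the input that provides, functorially in families, a canonical reduction of the relevant structure to the parabolic $P \subset G$ with Levi $L$, and this reduction is the geometric engine producing parabolic induction on the cohomological side.

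\textbf{Step 1 (Reduction to the Levi).} First I would construct a canonical morphism
\[
 \pi \colon \cM_{b,b'}^{\mu} \lra \cM_{b_0,b'_0}^{\mu}.
\]
Starting from a family of modifications $\cE' \to \cE$ of type $\mu$ with $\cE$ in class $[b]$ and $\cE'$ in class $[b']$, the HN-reducibility furnishes a canonical $P$-reduction of $\cE$ with Levi quotient of class $[b_0]$, compatible with a $P$-reduction of $\cE'$ whose Levi quotient has class $[b'_0]$. Applying the Levi functor to the modification yields an $L$-modification of the prescribed $L$-type, and the infinite-level trivialization data descend through the $P$-reductions, producing the desired point of $\cM_{b_0,b'_0}^{\mu}$.

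\textbf{Step 2 (Fibers and cohomology of $\pi$).} The next step is to analyze the fibers of $\pi$. Because the $P$-reduction is canonical, the fiber over an $L$-modification classifies the ``unipotent-radical part'' of the type-$\mu$ relative position once the $L$-type is fixed. Via the local model on the $B_{\mathrm{dR}}^{+}$-affine Grassmannian, this fiber should be v-locally an iterated extension of positive Banach--Colmez / perfectoid open balls, indexed by the positive roots of $N$ appearing in $\mu$. Its compactly supported cohomology is then the constant sheaf shifted by $2d$ and Tate-twisted by $-d$, where $d$ is the dimension of this ``unipotent part''. Proper base change then gives $R\pi_{!}\Lambda \simeq \Lambda[-2d](-d)$, and hence
\[
 H^{*}_{c}(\cM_{b,b'}^{\mu},\Lambda) \simeq H^{*-2d}_{c}(\cM_{b_0,b'_0}^{\mu},\Lambda)(-d)
\]
as graded $\Lambda$-modules.

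\textbf{Step 3 ($G(E)$-equivariance and induction).} Finally, I would upgrade this to an isomorphism of smooth $G(E) \times W_{E}$-modules. The Hecke $G(E)$-action on $\cM_{b,b'}^{\mu}$ restricts on the image of $\pi$ to a $P(E)$-action preserving the fibers, and $\cM_{b,b'}^{\mu}$ is assembled from $G(E)$-translates of the $P(E)$-stable preimage. Combining this with Step 2, one identifies $H^{*}_{c}(\cM_{b,b'}^{\mu},\Lambda)$ with the parabolic induction from $P(E)$ to $G(E)$ of $H^{*-2d}_{c}(\cM_{b_0,b'_0}^{\mu},\Lambda)(-d)$, after absorbing the modulus character of $N$ into the normalization of the shift and twist.

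The main obstacle will be Step 2: rigorously establishing the perfectoid-ball / Banach--Colmez structure of the fibers of $\pi$ inside the diamond framework. This calls for an explicit local computation on the $B_{\mathrm{dR}}^{+}$-affine Grassmannian, decomposing the $\mu$-Schubert cell along its $L$-Levi type into an iterated unipotent layer over the $L$-modification, and verifying that the compactly supported $\ell$-adic cohomology of each layer is that of a point up to the expected shift and twist. Once this geometric input is in hand, Steps 1 and 3 are essentially formal, the only subtlety being a careful bookkeeping of the modulus character coming from the $N$-action to match the normalization of parabolic induction.
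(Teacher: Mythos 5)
Your Step 1 has a genuine gap that undermines Step 2. The morphism
\[
 \pi \colon \cM_{b,b'}^{\mu} \lra \cM_{b_0,b'_0}^{\mu}
\]
you want to construct does \emph{not} exist on all of $\cM_{b,b'}^{\mu}$. A point of $\cM_{b,b'}^{\mu}$ is a modification $(\sE,\sE',D,f)$ \emph{together with trivializations} $\phi\colon \sE_b\xrightarrow{\sim}\sE$ and $\phi'\colon\sE_{b'}\xrightarrow{\sim}\sE'$. The canonical HN-filtration on $\sE$ plus Proposition~\ref{prop:vecss} indeed produce a $P$-reduction of both $\sE$ and $\sE'$, and $\phi$ automatically respects the resulting filtration on $\sE$. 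But there is \emph{no reason} the trivialization $\phi'$ carries the fixed filtration $\Fil_\lambda\sE_{b'}$ to the filtration on $\sE'$ determined by $f$ and the HN-reduction of $\sE$. The locus where it does is precisely the closed $\wt{P}_{b'}$-stable subspace $\cP_{b,b'}^{\mu}\subset\cM_{b,b'}^{\mu}$, and the map to $\cM_{b_0,b'_0}^{\mu}$ is only defined there. Consequently the claimed isomorphism of graded modules $H_c^{*}(\cM_{b,b'}^{\mu})\simeq H_c^{*-2d}(\cM_{b_0,b'_0}^{\mu})(-d)$ in Step 2 is false: what is true is $H_c^{*}(\cP_{b,b'}^{\mu})\simeq H_c^{*-2d}(\cM_{b_0,b'_0}^{\mu})(-d)$, and the passage from $\cP_{b,b'}^{\mu}$ to $\cM_{b,b'}^{\mu}$ is the very source of the parabolic induction, via the identification $\cM_{b,b'}^{\mu}\simeq\cP_{b,b'}^{\mu}\times^{\wt{P}_{b'}}\wt{J}_{b'}$ (Corollary~\ref{cor:MPJ}). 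If the graded-module statement of Step~2 were correct, Step~3 could never output an induced representation from a \emph{proper} parabolic of that same module without a contradiction.

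Two secondary corrections. First, the acting group is $J_{b'}(E)$, not $G(E)$; since $b'$ is basic, $\wt{J}_{b'}=\ul{J_{b'}(E)}$ is an inner form of $G(E)$ (they agree only when $b'$ is trivial), and the induction is from $P_{b'}(E)\subset J_{b'}(E)$, where $P_{b'}$ is the stabilizer of $\Fil_\lambda\sE_{b'}$ in $J_{b'}$. Second, the fiber you are trying to analyze via the $B^{+}_{\mathrm{dR}}$-affine Grassmannian is, in the paper's description, the diamond $\wt{J}_b^{U}$ of filtration-preserving unipotent automorphisms of $\sE_b$: one shows directly (Lemma~\ref{lem:contr}) that it is a partially proper smooth contractible diamond of dimension $N_{U,b}=\langle 2\rho_U,\nu_b\rangle$ by filtering it through its graded pieces, which are sections of slope-positive line bundles; no local-model computation is needed. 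The isomorphism $\cP_{b,b'}^{\mu}\simeq\cM_{b_0,b'_0}^{\mu}\times\wt{J}_b^{U}$ (Proposition~\ref{prop:han1.7}) requires a surjectivity argument that you would still need to supply: given any $(D,f)\in\cP_{b,b'}^{\mu}$, one must show that the induced Levi modification $f_L$ has type exactly $\mu$, not some other $L$-dominant conjugate; this is where the uniqueness Lemma~\ref{lem:Ising} for $I_{b_0,b'_0,\mu,L}$ enters, and where the matrix Lemma~\ref{lem:gLg} is used to verify that $(f_L\times^L P)^{-1}\circ f_P\in\wt{J}_b^{U}$.

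In short: the geometric engine is not a fibration $\cM_{b,b'}^{\mu}\to\cM_{b_0,b'_0}^{\mu}$, but a two-step decomposition $\cM_{b,b'}^{\mu}\simeq\bigl(\cM_{b_0,b'_0}^{\mu}\times\wt{J}_b^{U}\bigr)\times^{\wt{P}_{b'}}\wt{J}_{b'}$: the first factor is responsible for the shift and Tate twist, the induced-space structure over $\wt{P}_{b'}\subset\wt{J}_{b'}$ for the parabolic induction.
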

See Theorem \ref{thm:HInd} for the precise statement. 
This theorem is a generalization of 
the Harris--Viehmann conjecture on 
cohomology of non-basic Rapoport--Zink spaces 
in \cite[Conjecture 8.5]{RVlocSh} 
(\cf \cite[Conjecture 5.2]{HarLLCvcS}) up to a character twist
under the Hodge--Newton reducibility condition. 
We also show that the compactly supported cohomology of 
$\cM_{b,b'}^{\mu}$ does not contain any supercuspidal representation. 
These results can be viewed as generalization of results in \cite{MantnbRZ}. 
Using the above theorem, we can show the following: 
\begin{thmn}
The compactly supported cohomology of 
$\Hecke_{[b],[b']}^{\mu}$ with coefficient in 
$\ola{h}^* \sF_{\varphi}$ 
vanishes. 
\end{thmn}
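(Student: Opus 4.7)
The plan is to deduce the vanishing from the two preceding results on $\cM_{b,b'}^{\mu}$—the parabolic-induction theorem and the non-existence of supercuspidal subquotients in its cohomology—via the cover $\cM_{b,b'}^{\mu} \to \Hecke_{[b],[b']}^{\mu}$, after first reducing to the open Hecke stratum. The stack $\Hecke_{[b],[b']}^{\leq \mu}$ admits a stratification by dominant $\mu' \leq \mu$, with locally closed strata $\Hecke_{[b],[b']}^{\mu'}$ consisting of modifications of exact type $\mu'$. Using the distinguished triangles associated to the open/closed decompositions and descending induction on $\mu'$, it suffices to establish the vanishing of $R\Gamma_c(\Hecke_{[b],[b']}^{\mu'}, \ola{h}^* \sF_{\varphi})$ for each such $\mu'$. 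Replacing $\mu$ by $\mu'$, the problem reduces to proving vanishing on the open Hecke stratum $\Hecke_{[b],[b']}^{\mu}$.

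On $\Hecke_{[b],[b']}^{\mu}$ the leg $\ola{h}$ lands in the basic stratum $\Bun_{G,\ol{\bF}_q}^{[b']}$, and the restriction of $\sF_{\varphi}$ there corresponds, under the local Langlands correspondence, to a supercuspidal representation $\pi_{\varphi,b'}$ of the inner form $G_{b'}(E)$ (which is supercuspidal precisely because $\varphi$ is cuspidal). Since $\cM_{b,b'}^{\mu}$ covers $\Hecke_{[b],[b']}^{\mu}$ and carries a $G_{b}(E)\times G_{b'}(E)$-action, computing the left-hand cohomology via this cover yields an identification of the form
\[
R\Gamma_c\bigl(\Hecke_{[b],[b']}^{\mu}, \ola{h}^* \sF_{\varphi}\bigr)
\;\simeq\;
R\Hom_{G_{b'}(E)}\bigl(\pi_{\varphi,b'}^{\vee},\; R\Gamma_c(\cM_{b,b'}^{\mu})\bigr),
\]
up to a shift and a Tate twist, with a residual $G_{b}(E)$- and Weil-group action on both sides that is irrelevant for vanishing.

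By the preceding result of the paper, the smooth $G_{b'}(E)$-representation $R\Gamma_c(\cM_{b,b'}^{\mu})$ contains no supercuspidal subquotient (in the HN-reducible setting this follows from the parabolic induction of Theorem \ref{thm:HInd}, and more generally from the non-basic structure provided by the proper Levi $L^b$). Since $\pi_{\varphi,b'}$ is supercuspidal, the standard orthogonality between supercuspidals and smooth representations with no supercuspidal subquotient forces the displayed $R\Hom$ to vanish, yielding the desired vanishing on the open stratum and hence, by the induction of the first paragraph, on all of $\Hecke_{[b],[b']}^{\leq \mu}$.

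The principal obstacle is the second step: making the descent formula precise. One must verify that $\sF_{\varphi}|_{\Bun_{G,\ol{\bF}_q}^{[b']}}$ is indeed the sheaf attached to $\pi_{\varphi,b'}$ under the dictionary between smooth representations of $G_{b'}(E)$ and sheaves on the basic stratum, and one must keep track of the $G_{b}(E)$-, $G_{b'}(E)$- and Weil-group actions coming from the tower $\cM_{b,b'}^{\mu}\to \Hecke_{[b],[b']}^{\mu}$ so that the $R\Hom$ is taken exactly against the full smooth $G_{b'}(E)$-representation $R\Gamma_c(\cM_{b,b'}^{\mu})$. Once this bookkeeping is settled, the stratification argument and the supercuspidal orthogonality are formal consequences.
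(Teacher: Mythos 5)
Your plan correctly identifies the key geometric input (cover $\Hecke_{[b],[b']}$ by the $\wt J_b\times\wt J_{b'}$-space $\cM_{b,b'}$, descend, and kill the cohomology using cuspidality of $\pi_{\varphi,b',\rho}$), and the reduction along the stratification by $\mu'\leq\mu$ is legitimate. But two steps as written contain genuine gaps.

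\textbf{The descent functor is wrong.} The spectral sequence postulated in Section \ref{sec:Hsta} and used in the paper is
\[
H_i\bigl(J_{b'}(E),\, H_{\mathrm{c}}^j(\cM_{b,b',\bC_p^{\flat}}^{\leq\mu},\ol{\bQ}_\ell)\otimes\pi_{\varphi,b',\rho}\bigr)\ \Longrightarrow\ H_{\mathrm{c}}^{j-i}\bigl(\cT_{b,b',\bC_p^{\flat}}^{\leq\mu},\, t_{b,b'}^*\ul{\pi_{\varphi,b',\rho}}\bigr),
\]
i.e.\ group \emph{homology} of the tensor product, not $R\Hom_{J_{b'}(E)}(\pi^{\vee}, R\Gamma_{\mathrm{c}}(\cM))$. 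For a smooth representation $V$ of a $p$-adic group and an irreducible admissible $\pi$, the coinvariants $(V\otimes\pi)_G$ detect copies of $\pi^{\vee}$ occurring as \emph{quotients} of $V$, while $\Hom_G(\pi^{\vee},V)$ detects copies occurring as \emph{subrepresentations}; these are not the same functor, and neither is a straightforward derived version of the other. Your formula needs to be replaced by the above homology statement before any orthogonality argument is applied.

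\textbf{The ``standard orthogonality'' does not give the higher vanishing for free.} Knowing that $H_{\mathrm{c}}^j(\cM_{b,b'})$ has no supercuspidal subquotient immediately kills $\Hom$ (or the ordinary coinvariants in degree $0$), but the spectral sequence forces you to kill \emph{all} the derived functors $H_i(J_{b'}(E),-)$. The paper achieves this by exploiting the concrete shape
\[
H_{\mathrm{c}}^j(\cM_{b,b',\bC_p^{\flat}}^{\leq\mu}) \simeq \Ind_{P_{b'}(E)}^{J_{b'}(E)} H_{\mathrm{c}}^j(\cP_{b,b',\bC_p^{\flat}}^{\leq\mu})
\]
obtained from Lemma \ref{lem:quotiso} and Theorem \ref{thm:HInd}, then noting that parabolic induction preserves projective objects (by Bernstein's second adjoint theorem, since it has the exact Jacquet functor as a right adjoint). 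This yields an explicit projective resolution of $H_{\mathrm{c}}^j(\cM)\otimes\pi_{\varphi,b',\rho}$ whose terms have the form $\Ind_{P_{b'}(E)}^{J_{b'}(E)}V_i\otimes\pi_{\varphi,b',\rho}$, and the coinvariants of each such term vanish because $\pi_{\varphi,b',\rho}$ has trivial Jacquet module. You would need either this argument, or a careful appeal to the Bernstein-block decomposition showing $H_{\mathrm{c}}^j(\cM)$ lies entirely in the non-cuspidal blocks, to replace the phrase ``standard orthogonality.''

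Finally, a structural remark: the paper does not reduce to the open stratum inside the proof of Theorem \ref{thm:van}; it works directly with $\cM^{\leq\mu}$ and applies the $\mu$-level results there. Your reduction is in fact the more conservative reading of Lemma \ref{lem:quotiso} and Theorem \ref{thm:HInd} (which are stated only for $\mu$), so this part of your strategy is sound and arguably cleaner, but the two issues above must be fixed for the argument to go through.
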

See Theorem \ref{thm:van} 
for the precise statement. 
This result is partial, since we are assuming Hodge--Newton reducibility. 
On the other hand, 
the assumption is automatically satisfied 
if 
$\Hecke_{[b],[b']}^{\leq \mu}$ is not empty 
in the case where $G=\GL_2$ and 
$\mu (z) =\diag (z,1)$. 
As an application, we can show the following: 
\begin{thmn}
Assume that $G=\GL_2$ and 
$\mu (z) =\diag (z,1)$. 
Then the Hecke eigensheaf property for a cuspidal Langlands 
parameter holds. 
\end{thmn}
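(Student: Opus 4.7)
My plan is to verify the Hecke eigensheaf property stratum by stratum. Using the Newton decomposition $\Bun_{G,\ol{\bF}_q} = \coprod_{[b]} \Bun_{G,\ol{\bF}_q}^{[b]}$ together with proper base change along $\ora{h}$, it suffices to check for each $[b] \in B(\iGL_2)$ that $\ora{h}_!(\ola{h}^* \sF_\varphi \otimes \IC_\mu)$ restricted to $\Bun_G^{[b]} \times_{\bF_q} \Div_X^1$ agrees with the restriction of $\sF_\varphi \boxtimes (r_\mu \circ \varphi)$. Since $\sF_\varphi$ is supported on the semi-stable locus, the right-hand side vanishes on non-basic strata, and the support of $\ola{h}^* \sF_\varphi$ confines the Hecke correspondence to the pieces $\Hecke_{[b],[b']}^{\leq \mu}$ with $[b']$ basic.

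For non-basic $[b]$, since $\mu = \diag(z,1)$ is minuscule, $\IC_\mu$ is a shifted constant sheaf and the desired vanishing reduces to the second theorem stated in the introduction, once its hypotheses are checked. The only proper Levi of $\iGL_2$ is the diagonal torus $T$, which coincides with $L^b$ for every non-basic $[b]$. Taking $L = T$, a direct computation using the Newton polygon dominance inequality $\nu_{[b]} \leq \nu_{[b']} + \mu$ shows that $\Hecke_{[b],[b']}^{\leq \mu}$ can be non-empty only when $[b] \simeq \cO(n+1) \oplus \cO(n)$ and $[b'] \simeq \cO(n)^{\oplus 2}$ for some $n \in \bZ$, so $[b']$ automatically lies in the image of $B(T)$, as already claimed above. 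The twisted HN-reducibility condition on $([b],[b_0],\mu)$ is satisfied for this $(L, [b_0])$ because $\mu$ factors through $T$ and the HN polygon of $[b]$ has a single non-trivial break. The cited vanishing theorem then gives $R\Gamma_c(\Hecke_{[b],[b']}^{\leq \mu}, \ola{h}^* \sF_\varphi \otimes \IC_\mu) = 0$, so $\ora{h}_!(\ola{h}^* \sF_\varphi \otimes \IC_\mu)$ vanishes over $\Bun_G^{[b]} \times \Div_X^1$.

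For basic $[b]$, every basic $[b']$ contributing to the correspondence also lies in the semi-stable locus, and the degrees of $[b]$ and $[b']$ differ by one. The correspondence $\Hecke_{[b],[b']}^{\mu}$ is identified, via the Fargues--Scholze diamond formalism, with a quotient of the infinite-level Lubin--Tate tower for $\iGL_2$. Its compactly supported cohomology realizes both the supercuspidal part of the local Langlands correspondence and the local Jacquet--Langlands correspondence for $\iGL_2$, by classical results of Deligne, Carayol, and Harris--Taylor reinterpreted in the Fargues--Scholze framework. Combined with the fact that $\sF_\varphi$ is built from the local Langlands correspondence, this identifies the restriction of $\ora{h}_!(\ola{h}^* \sF_\varphi \otimes \IC_\mu)$ over the basic stratum of $\Bun_G \times \Div_X^1$ with $\sF_\varphi \boxtimes (r_\mu \circ \varphi)$, completing the proof.

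The main obstacle I expect is the basic-stratum identification: making the identification of $\Hecke^\mu_{[b],[b']}$ with the Lubin--Tate diamond precise, and carefully matching the cohomological realization of the local Langlands correspondence with the normalization of $\sF_\varphi$, including tracking degree shifts, Tate twists, and the Weil descent data. The non-basic vanishing, by contrast, is a clean application of the previously proven vanishing theorem once the automatic nature of its hypotheses in this case has been verified.
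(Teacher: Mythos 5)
Your proposal matches the paper's argument in structure and substance: the vanishing over the non-basic strata is obtained exactly by combining the structure lemma on degree-one modifications of rank-two bundles (Lemma~\ref{lem:E1nE2s}) with the vanishing theorem (Theorem~\ref{thm:van}), after observing that for $G=\iGL_2$ with $L=T$ the HN-reducibility hypothesis and the requirement that $[b']$ lift to $B(L)$ hold automatically; and the identification over the semi-stable locus is the non-abelian Lubin--Tate computation (Proposition~\ref{prop:NALT}), which covers every basic class because $N\equiv 0,1 \pmod 2$ for all $N$. The delicacy you flag at the end --- matching degree shifts, Tate twists, and Weil descent data in the Lubin--Tate identification --- is precisely where the paper invests its effort in Proposition~\ref{prop:NALT} and Lemma~\ref{lem:Mbb'LT}.
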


During the course of this work, 
Hansen put a related preprint 
\cite{HanHarr} on his webpage, 
which shows the Harris--Viehmann conjecture for $\GL_n$ under 
the Hodge--Newton reducibility condition. 
We learned 
his result on canonical filtrations 
and some consequences of 
Scholze's work \cite{SchEtdia} on cohomology of diamonds  
from \cite{HanHarr}. 
Note that the result of \cite{HanHarr} 
is enough for the application to Fargues' conjecture in $\GL_2$-case. 
Our main points are 
proving the Harris--Viehmann conjecture 
under the Hodge--Newton reducibility condition 
for general reductive groups 
and making the relation to Fargues' conjecture clear. 
After this work was done, 
Fargues' conjecture 
for cuspidal Langlands parameters in the $\GL_n$-case is proved in \cite{AnLBAvGLn} and \cite{HansclocSh} by a different method. 

In Section \ref{sec:Gbun}, 
we recall a definition of 
the stack of $G$-bundle on the Fargues--Fontaine curve, 
and its structure. 
In Section \ref{sec:Hsta}, 
we recall a definition of the Hecke stack and explain a cohomological formula. 
In Section \ref{sec:Fconj}, 
we construct a sheaf which satisfies 
properties (1), (2) and (3) of \cite[Conjecture 4.4]{FarGover} 
and explain the Hecke eigensheaf property 
in Fargues' conjecture 
for cuspidal Langlands parameters. 

In Section \ref{sec:vcoh}, 
we study a non-semi-stable locus in the Hecke stack. 
We find that a generalization of 
a diamond of a non-basic Rapoport--Zink space at infinite level 
covers the non-semi-stable locus in the Hecke stack. 
We show that the cohomology of 
the generalized space 
can be written as a parabolic induction of 
the cohomology of smaller space 
associated a Levi subgroup 
under the Hodge--Newton reducibility condition. 
In particular, 
we see that the cohomology 
does not contain any supercuspidal representation 
in each degree. 
As a result, we show that 
the cohomology of the non-semi-stable locus in the Hecke stack 
with a coefficient coming from a cuspidal Langlands parameter 
vanishes. 

In Section \ref{sec:NALT}, 
we see that we can recover  
Hecke eigensheaf property 
on some part of the semi-stable locus 
from non-abelian Lubin--Tate theory 
in the $\GL_n$-case. 
In Section \ref{sec:Hep}, 
we show that the Hecke eigensheaf property 
in the $\GL_2$-case, 
using the results in the preceding sections. 

\subsection*{Acknowledgements}
The authors would like to thank Laurent Fargues 
and Peter Scholze 
for answering our questions on 
their forthcoming works. 
They also want to thank Paul Ziegler 
for answering a question regarding his work. 
They are grateful to David Hansen 
for his helpful comment 
on a previous version of this paper. 
They also want to thank Teruhisa Koshikawa 
for his comments on this paper. 
Finally, they thank a referee for helpful comments and suggestions. 

\section{Stack of $G$-bundles}\label{sec:Gbun}

In this section we recall various results regarding the stack of $G$-bundles on the curve. Let $p$ be a prime number. 
Fix $E$ a finite extension of $\bQ_p$ 
with residue field $\bF_q$. 
We follow the definition of perfectoid algebra in 
\cite[1.1]{FonPerBou} (\cf \cite[Definition 5.1]{SchPerf}). 
For an algebraic extension $k$ of $\bF_q$, let $\Perf_{k}$ be the category of 
perfectoid spaces over $k$ 
equipped with v-topology (\cf \cite[Definition 8.1(iii)]{SchEtdia}). 
For $S \in \Perf_{\bF_q}$, 
we have the relative Fargues--Fontaine curve 
$X_S = Y_{S}/\varphi^{\bZ}$ as in \cite[Definition II.1.15]{FaScGeomLLC}. 
For an affinoid perfectoid $\Spa (R,R^+) \in \Perf_{\bF_q}$, 
we have also the schematical 
relative Fargues--Fontaine curve 
$X_{\Spa(R,R^{+})}^{\sch}$ as defined just after \cite[Remark II.2.8]{FaScGeomLLC}. The schematic version $X_{\Spa(R,R^{+})}^{\sch}$ only depends on $R$ and so we denote it by $X_R^{\sch}$. 
We have an equivalence between categories of 
vector bundles on 
$X_{\Spa (R,R^+)}$ and $X_R^{\sch}$ 
by \cite[Theorem 8.7.7]{KeLiRpHF}.

Let $G$ a connected reductive group over $E$. 
Let $\Bun_G$ be the fibered category in groupoids 
whose fiber at $S \in \Perf_{\ol{\bF}_q}$ 
is the groupoid of $G$-bundles on $X_S$. 
Then $\Bun_G$ has a reasonable geometry. Let us just mention that, in particular it is a small v-stack  (\cf \cite[Proposition III.1.3]{FaScGeomLLC}). 

Let $\breve{E}$ be the completion 
of the maximal unramified extension of $E$. 
Let $\sigma$ be the continuous automorphism of 
$\breve{E}$ lifting the $q$-th power Frobenius 
on the residue field. 
For $b \in G(\breve{E})$, 
we have an associated 
$G$-isocrystal 
\[
 \cF_b \colon 
 \Rep (G) \lra \varphi\mathchar`-\Mod_{\breve{E}} ; \ 
 (V,\rho) \mapsto (V \otimes_E \breve{E}, \rho(b)\sigma ). 
\]
Let $B(G)$ be the set of $\sigma$-conjugacy classes in 
$G(\breve{E})$. 
Then 
we have a bijection 
\[
 B(G) \lra \{ 
 \textrm{the isomorphism classes of 
 $G$-isocrystals over $\breve{E}$} \} ;\ 
 [b] \mapsto [\cF_b] 
\]
by \cite[Remarks 3.4 (i)]{RaRiFiso}. 

Let $S \in \Perf_{\ol{\bF}_q}$. 
We have a functor 
\[
 \varphi\mathchar`-\Mod_{\breve{E}} \lra 
 \Bun_{X_S} ;\ 
 (D,\varphi) \mapsto \sE (D,\varphi ), 
\]
where $\sE (D,\varphi )$ is given by 
\[
 Y_S \times_{\varphi} D \lra 
 Y_S /\varphi^{\bZ} =X_S. 
\]
The composite 
\[
 \Rep (G) \stackrel{\cF_b}{\lra} 
 \varphi\mathchar`-\Mod_{\breve{E}} 
 \xra{\sE(-)} \Bun_{X_S} 
\]
gives a $G$-bundle $\sE_{b,X_S}$ on $X_S$. 
We simply write $\sE_b$ 
for $\sE_{b,X_S}$ sometimes. 
If $b'=g b \sigma (g)^{-1}$, 
then we have an isomorphism 
\begin{equation}\label{eq:tg}
 t_g \colon \sE_{b,X_S} \lra 
 \sE_{b',X_S}  
\end{equation}
induced by the multiplication by $g$. 
The isomorphism class of $\sE_{b,X_S}$ 
depends only on the class of $b$ in $B(G)$. Moreover by \cite[Theorem III.2.2]{FaScGeomLLC}, this gives a complete description of the points of $\Bun_G$. 

Let $\pi_1 (G)$ be an algebraic fundamental group 
of $G$ defined in \cite[1.4]{BoroAGc}. 
Let $\ol{E}$ be a separable closure of $E$ and let $\Gamma =\Gal (\ol{E}/E)$ be its absolute Galois group. 
Let 
\[
 \kappa \colon 
 B(G) \lra \pi_1 (G)_{\Gamma}  
\]
be the Kottwitz map 
in \cite[Theorem 1.15]{RaRiFiso} 
(\cf \cite[Lemma 6.1]{KotShlam}). Then \cite[Theorem III.2.7]{FaScGeomLLC}) provides a decomposition 
\[
 \Bun_{G} = 
 \coprod_{\alpha \in \pi_1 (G)_{\Gamma}} 
 \Bun_{G}^{\alpha} 
\]
into open and closed substacks. 

Let $\bD$ be the split pro-algebraic torus 
over $E$ such that 
$X_* (\bD)=\bQ$. 
For $b \in G(\breve{E})$, 
we have an associated homomorphism 
\[
 \tilde{\nu}_b \colon \bD_{\breve{E}} \lra G_{\breve{E}} 
\]
constructed in \cite[4.2]{KotIso}. 
This gives a well-defined map 
\[
 \nu \colon 
 B(G) \lra \bigl( 
 \Hom (\bD_{\breve{E}},G_{\breve{E}})/G(\breve{E}) 
 \bigr)^{\sigma} ;\ 
 [b] \mapsto [\tilde{\nu}_b], 
\]
which is called the Newton map. 
We say that 
$b \in G(\breve{E})$ is basic, if 
$\tilde{\nu}_b$ factors through the center of 
$G_{\breve{E}}$. 
We say that $[b] \in B(G)$ is basic 
if it consists of basic elements in $G(\breve{E})$. 
Let $B(G)_{\basic}$ denote 
the basic elements in $B(G)$. 
We recall that 
the Kottwitz map induces a bijection 
\[
 \kappa \colon B(G)_{\basic} \stackrel{\sim}{\lra} 
 \pi_1 (G)_{\Gamma} . 
\]

Assume that $G$ is quasi-split in the sequel.  
We fix subgroups $A \subset T \subset B$ 
of $G$, where $A$ is a maximal split torus,
$T$ is a maximal torus and 
$B$ is a Borel subgroup. 
We write 
$X_* (A)^+$ for the dominant cocharacters of $A$. 
Then we have a natural isomorphism 
\[
 X_* (A)^+_{\bQ} \stackrel{\sim}{\lra} 
 \bigl( 
 \Hom (\bD_{\breve{E}},G_{\breve{E}})/G(\breve{E}) 
 \bigr)^{\sigma}. 
\]
Let $b \in G(\breve{E})$. 
We write 
$\nu_b \in X_* (A)^+_{\bQ}$ 
for the representative of $[\tilde{\nu}_b]$. 
Let $w$ be the maximal length element 
in the Weyl group of $G$ with respect to $T$. 
Then the map 
\[
 \mathrm{HN} \colon 
 B(G) \to X_* (A)^+_{\bQ} ; \ 
 [b] \mapsto w \cdot (- \nu_b) 
\]
is called the Harder--Narasimhan map. After equipping $X_* (A)^+_{\bQ}$ with the natural order topology, as discussed in \cite[Section 2]{RaRiFiso}, the map $\mathrm{HN}$ is upper semicontinuous by \cite[Theorem III.2.3]{FaScGeomLLC}. 

We define an algebraic group $J_b$ over $E$ by 
\[
 J_b (R)= \{ g \in G(R \otimes_E \breve{E} ) \mid 
 g b \sigma (g)^{-1} = b \} 
\]
for any $E$-algebra $R$. 
Then we have $J_b (E)=\Aut (\cF_b)$. 
We define a v-sheaf $\wt{J}_b$ on 
$\Perf_{\ol{\bF}_q}$ by 
\[
 \wt{J}_b (S) = \Aut (\sE_{b,S}) 
\]
for an $S \in \Perf_{\ol{\bF}_q}$. 
We note that 
the isomorphism class of 
$J_b$ and $\wt{J}_b$ depend only on 
$[b] \in B(G)$. 

For a locally profinite group $H$, we write 
$\ul{H}$ for v-sheaf on $\Perf_{\ol{\bF}_q}$ 
associated to $H$. 
Then we have an inclusion 
\[
 \ul{J_b(E)} \subset \wt{J}_b. 
\]
Let 
$\wt{J}_b^0$ be the connected component of 
the unit section of $\wt{J}_b$. 
Then we have 
\[
 \wt{J}_b = \wt{J}_b^0 \rtimes \ul{J_b(E)} 
\]
and $\wt{J}_b^0$ is of dimension $\langle 2\rho, \nu_{b} \rangle$ by \cite[Proposition III.5.1]{FaScGeomLLC}. In particular $\ul{J_b(E)} = \wt{J}_b$ 
if and only if $b$ is basic. 

Let $\Bun_G^{\mathrm{ss}}$ be the 
semi-stable locus of $\Bun_G$. 
Then $\Bun_G^{\mathrm{ss}}$ is an open substack of 
$\Bun_G$ by \cite[Theorem III.4.5]{FaScGeomLLC}]. 
Let $\alpha \in \pi_1 (G)_{\Gamma}$. Then the upper semicontinuity of $\mathrm{HN}$ provides a stratification 
\[
 \Bun_{G}^{\alpha} = 
 \coprod_{\nu \in X_*(A)^+_{\bQ}} 
 \Bun_{G}^{\alpha,\mathrm{HN}=\nu}.
\]
Take $\nu \in X_*(A)^+_{\bQ}$ and assume that 
$\Bun_{G}^{\alpha,\mathrm{HN}=\nu}$ 
is not empty. 
Then we have a unique 
$[b] \in B(G)$ such that 
$\kappa ([b])=\alpha$ and $\mathrm{HN}([b])=\nu$. 
Take any representative 
$b$ of $[b]$. 
Then by \cite[Proposition III.5.3]{FaScGeomLLC} we have an isomorphism 
\[
 x_b \colon [\Spa (\ol{\bF}_q )/\wt{J}_b ] 
 \stackrel{\sim}{\lra} 
 \Bun_{G}^{\alpha,\mathrm{HN}=\nu}  
\]
defined by $\sE_{b}$. 
If $b$ is basic, 
then $\Bun_{G}^{\alpha,\mathrm{HN}=\nu}$ 
is equal to the semi-stable locus 
$\Bun_{G}^{\alpha,\mathrm{ss}}$ of 
$\Bun_{G}^{\alpha}$ by \cite[Theorem III.4.5]{FaScGeomLLC}]. 

The $\wt{J}_b$-torsor 
$\sT_b$ over 
$\Bun_{G}^{\alpha,\mathrm{HN}=\nu}$ 
given by $x_b$ is the torsor defined by the functor 
which sends $S \in \Perf_{\ol{\bF}_q}$ to 
\[
 \Bigl( f \colon S \lra 
 \Bun_{G}^{\alpha,\mathrm{HN}=\nu}, 
 \phi \colon \sE_{b,S} \stackrel{\sim}{\lra} 
 \sE_f \Bigr) , 
\]
where $\sE_f$ is the $G$-bundle on $X_S$ 
determined by $f$, and 
$g \in \wt{J}_b (S)$ acts on 
$\sT_b (S)$ (on the right) by 
\begin{equation}\label{eq:ac}
 (f,\phi) \mapsto (f,\phi \circ g ). 
\end{equation}
Then we have 
$\Frob^* x_b =x_{\sigma(b)}$ and 
$\Frob^* \sT_b =\sT_{\sigma(b)}$. 
Since we have $\sigma (b)=b^{-1} b \sigma(b)$, 
we have a Weil descent datum 
\begin{equation}\label{eq:wb}
 w_b \colon \Frob^* \sT_b \lra \sT_b 
\end{equation}
induced by $t_{b^{-1}} \colon \sE_{b,S} \to \sE_{\sigma(b),S}$ 
in \eqref{eq:tg}. Explicitly at the level of $S$-points, \eqref{eq:wb} sends $(f,\phi)$ to $(f, \phi \circ t_{b^{-1}})$. If $b'=g b \sigma(g)^{-1}$, 
then 
$t_g^{-1}$ induces 
an isomorphism 
$\sT_b \to \sT_{b'}$, 
which is compatible with 
the Weil descent data $w_b$ and $w_{b'}$. 
Hence the isomorphism class of 
$(\sT_b,w_b)$ depends only on 
$[b] \in B(G)$.

\section{The global Hecke stack}\label{sec:Hsta}

Let $\Div^1_{X,\bF_q}$ be the moduli space of degree $1$ closed Cartier divisors 
defined in \cite[Definition II.1.19]{FaScGeomLLC}, 
which sends 
$S \in \Perf_{\bF_q}$ 
to the set of isomorphism classes of 
degree $1$ closed Cartier divisors on $X_S$. 
By \cite[Proposition II.1.21]{FaScGeomLLC}, 
$\Div_{X,\bF_q}^1 \to \Spa (\bF_q)$ is representable in spatial diamonds and  
we have an isomorphism 
\[
 \Spa (E)^{\diamond} 
 / \varphi_{E^{\diamond}}^{\bZ} 
 \stackrel{\sim}{\lra} \Div_{X,\bF_q}^1 , 
\] 
where $\varphi_{E^{\diamond}}$ is a $q$-th power 
Frobenius action on $E^{\diamond}$. 
We put $\Div_{X}^1 =\Div_{X,\bF_q}^1 \times_{\bF_q} \ol{\bF}_q$. 

We write 
$X_{*}(T)^+$ for the set of dominant 
cocharacters of $T$. 
Let $\mu \in X_*(T)^+/\Gamma$. 
We define a Hecke stack $\Hecke^{\leq \mu}$ 
as the fibered category in groupoids 
whose fiber at an affinoid perfectoid $\Spa (R,R^+) \in \Perf_{\bF_q}$ 
is the groupoid of quadruples $(\sE, \sE', D, f)$, 
where 
\begin{itemize}
 \item 
 $\sE$ and $\sE'$ are $G$-bundles on $X_R^{\sch}$, 
 \item 
 $D$ is an effective Cartier divisor of degree $1$ 
 on $X_R^{\sch}$ given by some untilt of $R$, 
 \item 
 the isomorphism 
 \[
  f \colon 
  \sE|_{X_R^{\sch} \setminus D} \stackrel{\sim}{\lra} 
  \sE'|_{X_R^{\sch} \setminus D}
 \]
 is a modification, which is bounded by $\mu$ 
 geometric fiberwisely. 
\end{itemize}
Then we have morphisms 
\[
 \xymatrix{
 & \Hecke^{\leq \mu} 
 \ar@{->}[ld]_-{\ola{h}} \ar@{->}[rd]^-{\ora{h}} & \\ 
 \Bun_G  & & 
 \Bun_G \times \Div_X^1 
 } 
\]
defined by 
$\ola{h} (\sE, \sE', D, f) =\sE'$ and 
$\ora{h} (\sE, \sE', D, f) =(\sE,D)$. 

In the sequel, a diamond means 
a diamond on $\Perf_{\ol{\bF}_q}$. 
Let $\ell$ be a prime number different from $p$. 
As we will need the natural functor (i.e. relative homology) constructed in \cite{FaScGeomLLC}, let us briefly review it. For $X$ a small v-stack, the derived category of solid $\ol{\bQ}_{\ell}$-sheaves $D_{\solid}( X, \ol{\bQ}_{\ell})$ is constructed in \cite[Definition VII.1.17]{FaScGeomLLC}. 
For what follows all tensor products are solid tensor products as constructed in \cite[Proposition VII.2.2]{FaScGeomLLC}.
For a map 
$f \colon X \to Y$ of small v-stacks, 
there is a functor 
\[
  f_{\natural} \colon 
 D_{\solid}( X, \ol{\bQ}_{\ell}) 
 \to D_{\solid}( Y, \ol{\bQ}_{\ell})  
\]
constructed in \cite[\S VII.3]{FaScGeomLLC}. 
See \cite[Proposition VII.3.1]{FaScGeomLLC} for basic properties of this functor. 
For an $\ell$-cohomologically smooth morphism $f \colon X \to Y$ of diamonds, we put 
\[
f^!\ol{\bQ}_{\ell}=\varprojlim_{n} Rf^! (\bZ/\ell^n \bZ) \otimes_{\bZ_{\ell}} \ol{\bQ}_{\ell} \in D_{\solid}( X, \ol{\bQ}_{\ell}). 
\]
For an Artin v-stack $X$, let 
$D_{\lis}(X,\ol{\bQ}_{\ell}) \subset D_{\solid}( X, \ol{\bQ}_{\ell})$ be the subcategory defined in 
\cite[Definition VII.6.1]{FaScGeomLLC}.

Let $\cD_{\infty}$ be a diamond over $\bC_p^{\flat}$ with an action of a profinite group $K_0$. 
Let $f_{\infty} \colon \cD_{\infty} \to \Spa (\bC_p^{\flat})$ be the structure morphism. 
Assume that the action of $K_0$ on geometric points of $\cD_{\infty}$ is free and the quotient diamond $\cD_{\infty}/K_0$ is an $\ell$-cohomologically smooth diamond over $\bC_p^{\flat}$. 
For an open subgroup $K$ of $K_0$, we put $\cD_K =\cD_{\infty}/K$, and let $f_K \colon \cD_K \to \Spa (\bC_p^{\flat})$ be the induced morphism. 
Then we put 
\[ H_{\mathrm{c}}^i (\cD_{\infty},\ol{\bQ}_{\ell})  = \varinjlim_{K \subset K_0}  R^i f_{K,\natural} ((f_K^!\ol{\bQ}_{\ell})^{\vee})
\]
for $i \geq 0$. Let $f \colon \cD \to \Spa (\bC_p^{\flat})$ be an $\ell$-cohomologically smooth morphism of diamonds.
For $\sF \in D_{\solid}( \cD, \ol{\bQ}_{\ell})$ and $i \geq 0$, we put 
\[
 H_{\mathrm{c}}^i (\cD,\sF ) = 
 R^i f_{\natural} (\cF \otimes (f^!\ol{\bQ}_{\ell})^{\vee}). 
\]
Let $h \colon \cM \to \cD$ be a $G_0$-torsor such that 
\[
 \varinjlim_{K \subset G_0} R f_{K,\natural} ((f_K^!\ol{\bQ}_{\ell})^{\vee}) \in D_{\lis}(\Spa (\bC_p^{\flat}),\ol{\bQ}_{\ell}) , 
\] 
where $G_0$ is a locally profinite group, $K$ runs along compact open subgroups of $G_0$ and $f_K \colon \cM /K \to \Spa (\bC_p^{\flat})$. 
Then we can regard $H_{\mathrm{c}}^j (\cM ,\ol{\bQ}_{\ell})$ as a smooth representation of $G_0$. 
Let $\pi$ be a smooth representation of 
$G_0$ 
over $\ol{\bQ}_{\ell}$. 
We 
define $\sF_{\pi} \in D_{\lis}( \cD, \ol{\bQ}_{\ell})$ 
as the pushforward of $\cM$ by $\pi$. 
Then we have a spectral sequence 
\begin{align}\label{eq:HSsp}
 H_i \bigl( G_0, H_{\mathrm{c}}^j (\cM ,\ol{\bQ}_{\ell}) \otimes \pi \bigr) 
 \Rightarrow H_{\mathrm{c}}^{j-i} (\cD,\sF_{\pi} ). 
\end{align}
This follows from \cite[Proposition VII.3.1]{FaScGeomLLC} as in the proof of \cite[Lemma 1.4]{ImaConv}.

\section{Fargues' conjecture}\label{sec:Fconj}
We recall the Hecke eigensheaf property in 
Fargues' conjecture in the 
case where 
the Langlands parameter is cuspidal and 
$\mu$ is minuscule. 
Up to some technicalities which were worked out in \cite{FaScGeomLLC}, we refer the reader to \cite[Conjecture 4.4(4)]{FarGover} for the general case. 

Let $\widehat{G}$ and ${}^L G$ be the dual group and L-group of $G$ over $\ol{\bQ}_{\ell}$. 
Let 
$\varphi \colon W_E \rightarrow {}^L G$ be 
a cuspidal $\ell$-adic L-parameter for $G$ (\cf \cite[Definition 1.15]{ImaLLCell}, \cite[Definition 4.1]{FarGover}). 
Let $S_{\varphi}$ be the centralizer of $\varphi$ in $\widehat{G}$. 
We fix a Whittaker datum. 
For $b \in B(G)_{\basic}$, 
let 
$\{ \pi_{\varphi,b,\rho} \}_{\rho \in \wh{S}_{\varphi}}$ 
be the $L$-packet 
corresponding to $\varphi$ 
by the local Langlands correspondence 
for the extended pure inner form $J_b$ of $G$ 
(\cf \cite[Conjecture 2.4.1]{KalSiso}). 
We recall that we have a decomposition 
\[
 \Bun_{G}^{\mathrm{ss}} = 
 \coprod_{\alpha \in \pi_1 (G)_{\Gamma}} 
 \Bun_{G}^{\alpha ,\mathrm{ss}} 
\]
into open and closed substacks. 
Let $\sF_{\varphi}$ be the object of 
$D_{\lis} (\Bun_{G},\ol{\bQ}_{\ell})$ 
with an action of $S_{\varphi}$ determined by the following conditions: 
\begin{itemize}
 \item 
 The support of $\sF_{\varphi}$ is contained in 
 $\Bun_{G}^{\mathrm{ss}}$. 
 \item 
 Let $\alpha \in \pi_1 (G)_{\Gamma}$.  
 Take a basic element $b \in G(\breve{E})$ 
 such that $\alpha=\kappa ([b])$. 
 Let 
 $\rho \in \wh{S}_{\varphi}$. 
 Let $\ul{\rho}$ be the 
 constant $\ol{\bQ}_{\ell}$-sheaf with action of 
 $S_{\varphi}$ on 
 $\Bun_{G}^{\alpha ,\mathrm{ss}}$  
 associated to $\rho$. 
 Let 
 $\ul{\pi_{\varphi,b,\rho}}$ be  
 the object of 
 $D_{\lis} (\Bun_{G}^{\alpha ,\mathrm{ss}},\ol{\bQ}_{\ell})$ 
 obtained as the pushforward of 
 the $\ul{J_b (E)}$-torsor $\sT_b$ under 
 $\pi_{\varphi,b,\rho}$. 
 Then we have 
 \begin{equation}\label{eq:resxb}
  \sF_{\varphi}|_{\Bun_{G}^{\alpha ,\mathrm{ss}}} 
  = 
  \bigoplus_{\rho \in \wh{S}_{\varphi},\, \rho|_{Z(\wh{G})^{\Gamma}} =\alpha} 
  \ul{\rho} \otimes \ul{\pi_{\varphi,b,\rho}}, 
 \end{equation}
 where we view $\alpha$ as an element of 
 $X^* (Z(\wh{G})^{\Gamma})$ 
 under the canonical isomorphism 
 $\pi_1 (G)_{\Gamma} \simeq X^* (Z(\wh{G})^{\Gamma})$. 
 The isomorphism class of 
 the right hand side of \eqref{eq:resxb} 
 does not depend on the choice of $b$, since the same is true for 
$\sT_b$. 
\end{itemize}
Then properties (1), (2) and (3) of \cite[Conjecture 4.4]{FarGover} are immediate. 

Take a representative $\mu' \in X_* (T)^+$ 
of $\mu$. 
Let $\Gamma'$ be the stabilizer of $\mu'$ in $\Gamma$. 
We put 
\[
 r_{\mu} = \Ind_{\wh{G} \rtimes \Gamma'}^{{}^L G} 
 r_{\mu'} , 
\]
where 
$r_{\mu'}$ is the highest weight $\mu'$ 
irreducible representation of 
$\wh{G} \rtimes \Gamma'$. 

As in \cite[IX.2]{FaScGeomLLC}, we can construct a functor 
\begin{equation}\label{eq:funcSatshf}
	\Rep_{\ol{\bQ}_{\ell}} ({}^L G) \to 
	D_{\solid} (\mathrm{Hecke}^{\leq \mu},\ol{\bQ}_{\ell} );\ V \mapsto \cS'_V 
\end{equation}
via the geometric Satake equivalence (\cf \cite[\S 10]{ImaGeomLLC}). 
Let $\IC_{\mu}'$ be the image of $r_{\mu}$ under  the functor \eqref{eq:funcSatshf}.

Now we can state the Hecke eigensheaf property in 
Fargues' conjecture: 

\begin{conj}
We have $$\overrightarrow{h}_{\natural}(\overleftarrow{h}^{*} 
 \sF_{\varphi} {\otimes}_{\overline{\bQ}_{\ell}} \IC_{\mu}') \cong \sF_{\varphi} \boxtimes (r_{\mu} \circ \varphi )$$
as objects of $D_{\solid} (\Bun_{G} \times \Div_{X}^1, \ol{\bQ}_{\ell})$ 
with actions of $S_{\varphi}$. 
\end{conj}

\section{Non-semi-stable locus}\label{sec:vcoh}
Let $b ,b' \in G(\breve{E})$. 
We have a natural morphism 
\[
 y_b \colon 
 [\Div_{X}^1 /\wt{J}_b ] 
 \simeq 
 [\Spa ( \ol{\bF}_q ) /\wt{J}_b ] \times 
 \Div_X^1 
 \xra{(x_b,\id )} 
 \Bun_{G} \times \Div_X^1 . 
\]
We consider the cartesian diagram (i.e. every sub-square is cartesian) 
\begin{equation*}
 \xymatrix{
 \Hecke_{b,b'}^{\leq \mu} \ar@{->}[r] \ar@{->}[dd]_-{\ola{h}_{b,b'}} 
 & 
 \Hecke_b^{\leq \mu} \ar@{->}[r] \ar@{->}[d] & 
 [\Div_{X}^1 /\wt{J}_b ] 
 \ar@{->}^-{y_b}[d] \\
  & 
 \Hecke^{\leq \mu} \ar@{->}^-{\overrightarrow{h}}[r] 
 \ar@{->}^-{\overleftarrow{h}}[d] & 
 \Bun_{G} \times \Div_X^1 \\ 
 [\Spa (\ol{\bF}_q)/\wt{J}_{b'} ] \ar@{->}[r]^-{x_{b'}} & 
 \Bun_{G}. &  
 }
\end{equation*}

By the construction, 
for a perfectoid affinoid $\ol{\bF}_q$-algebra $(R,R^+)$, 
the groupoid 
$\Hecke_{b,b'}^{\leq \mu} (R,R^+)$
consists of quadruples 
$(\sE, \sE', D, f)$, 
where 
\begin{itemize}
 \item 
 $\sE$ and $\sE'$ are $G$-bundles on $X_R^{\mathrm{sch}}$ 
 which are isomorphic to $\sE_b$ and $\sE_{b'}$ 
 fiberwisely over $\Spa (R,R^+)$. 
 \item 
 $D$ is an effective Cartier divisor of degree $1$ 
 on $X_R^{\sch}$, 
 \item 
 $f \colon \sE|_{X_R^{\mathrm{sch}} \setminus D} \to \sE'|_{X_R^{\mathrm{sch}} \setminus D}$ is a modification 
 bounded by $\mu$ geometric fiberwisely over $\Spa (R,R^+)$. 
\end{itemize}
Let $\cT_{b,b'}^{\leq \mu}$ be 
the $\wt{J}_b$-torsor over $\Hecke_{b,b'}^{\leq \mu}$ 
obtained by considering an isomorphism 
$\phi \colon \sE_b \xrightarrow{\sim} \sE$. 
Let 
$\Gr_{b,b'}^{\leq \mu}$ and 
$\cM_{b,b'}^{\leq \mu}$ be the 
$\wt{J}_{b'}$-torsors over 
$\Hecke_{b,b'}^{\leq \mu}$ and 
$\cT_{b,b'}^{\leq \mu}$ 
obtained by considering an isomorphism 
$\phi' \colon \sE_{b'} \xrightarrow{\sim} \sE'$ 
respectively. 
Then $\cM_{b,b'}^{\leq \mu}$ is a 
$\wt{J}_{b'}$-equivariant 
$\wt{J}_b$-torsor over $\Gr_{b,b'}^{\leq \mu}$. 
We have commutative diagrams 
\begin{equation*}
 \xymatrix{
 \cM_{b,b'}^{\leq \mu} \ar@{->}[r] \ar@{->}[d] & 
 \cT_{b,b'}^{\leq \mu} \ar@{->}[r] \ar@{->}[d] & 
 \Spa ( \breve{E} )^{\diamond} \ar@{->}[d] \\
 \Gr_{b,b'}^{\leq \mu} \ar@{->}[r] & 
 \Hecke_{b,b'}^{\leq \mu} \ar@{->}[r] & 
 [\Div_{X}^1 /\wt{J}_b ] , 
 }
\end{equation*}
where the sub-squares are cartesian. 

By \cite[Proposition 3.20]{FarGover}, $\cT_{b,b'}^{\leq \mu}$ is a diamond. 
Furthermore by \cite[Lemma 10.13, Proposition 11.5]{SchEtdia}, $\cM_{b,b'}^{\leq \mu}$ 
is a diamond if $b'$ is basic. 

\begin{rem}
The maps $\cM_{b,b'}^{\leq \mu} \to \Gr_{b,b'}^{\leq \mu}$ and $\cM_{b,b'}^{\leq \mu} \to \cT_{b,b'}^{\leq \mu}$ appearing in the above diagram are generalized versions of the Hodge--Tate period map and the Gross--Hopkins period map. Indeed if $b'=1$ and $\mu$ is minuscule then $\cM_{b,b'}^{\leq \mu} \to \Gr_{b,b'}^{\leq \mu}$ is the usual Hodge--Tate period map of a Rapoport--Zink space at infinite level associated to the isocrystal $b$ and $\cM_{b,b'}^{\leq \mu} \to \cT_{b,b'}^{\leq \mu}$ is the usual Gross--Hopkins period map. On the other hand if $b=1$ and $\mu$ is minuscule then $\cM_{b,b'}^{\leq \mu} \to \Gr_{b,b'}^{\leq \mu}$ is the Gross--Hopkins map and  $\cM_{b,b'}^{\leq \mu} \to \cT_{b,b'}^{\leq \mu}$ is the Hodge--Tate map associated to the isocrystal $b'$. 
\end{rem}

For a finite dimensional algebraic 
representation $V$ of $G$ and a rational number $\alpha$, 
we put 
\[
 \Fil_b^{\alpha} V = \bigoplus_{\alpha' \leq - \alpha} V_{\alpha'} , 
\]
where 
\[
 V = \bigoplus_{\alpha \in \bQ} V_{\alpha} 
\]
is the slope decomposition given by 
$\nu_b \in X_* (A)_{\bQ}^+$. 
This gives a filtration 
$\Fil_b$ 
on the forgetful fiber functor 
$\omega \colon \Rep (G) \to \mathrm{Vect}_E$ 
(\cf \cite[IV, 2.1]{SaaCatT}). 
The stabilizer of 
$\Fil_b \omega$ gives a parabolic subgroup $P^b$ of $G$. 
Let $L^b$ 
be the centralizer of $\nu_b \in X_* (A)_{\bQ}^+$. 
Take a Levi subgroup $L$ of $G$ containing $L^b$. 
We put $P=L P^b$. 
Then, 
$P$ is a parabolic subgroup of $G$ and 
$[b] \in B(G)$ is the image of 
an element $b_{00} \in L^b (\breve{E})$. 
Let $b_0$ be the image of $b_{00}$ in $L (\breve{E})$. 

We take a cocharacter 
$\lambda \in X_* (A)$ so that 
$P$ is associated to $\lambda$ 
in the sense of \cite[13.4.1]{SprLAG}. 
Then we have a 
filtration $\Fil_{\lambda}$ on $\omega$ 
associated to $\lambda$. 

We assume that $[b']$ is in the image of 
$B(L) \to B(G)$. 
Then $\Fil_{\lambda} \omega$ 
induces the filtrations 
$\Fil_{\lambda} \sE_b$ and $\Fil_{\lambda} \sE_{b'}$ as fiber functors 
by the construction, 
because $[b], [b']$ are in the image of 
$B(L) \to B(G)$ and 
$L$ is 
the centralizer of $\lambda$ in $G$. 

We define a closed subspace 
$\cC_{b,b'}^{\leq \mu}$ 
of $\Gr_{b,b'}^{\leq \mu}$ as a functor 
that sends 
a perfectoid affinoid $\ol{\bF}_q$-algebra 
$(R,R^+)$ 
to the isomorphism classes of 
$(\sE, \sE', D, f, \phi')$, 
where 
\begin{itemize}
 \item 
 $(\sE, \sE', D, f)$ is as in 
 $\Hecke_{b,b'}^{\leq \mu} (R,R^+)$, 
 \item 
 $\phi' \colon \sE_{b'} \xrightarrow{\sim} \sE'$ 
 and $f$ are compatible with 
 $\Fil_{\lambda} \sE_b$ and $\Fil_{\lambda} \sE_{b'}$ 
 geometric fiberwisely 
 in the sense that following holds for 
 any geometric point $\Spa (F,F^+)$ of $\Spa (R,R^+)$: 
 Take an isomorphism 
 $\sE_{b} \xrightarrow{\sim} \sE$ over $X_F^{\mathrm{sch}}$. 
 Let $D_F$ be a Cartier divisor of $X_F^{\mathrm{sch}}$  determined by $D$. 
 Then the composite 
 \[
  \sE_{b}|_{X_F^{\mathrm{sch}}\setminus D_F}  \stackrel{\sim}{\lra} 
  \sE|_{X_F^{\mathrm{sch}}\setminus D_F} 
  \stackrel{f}{\lra} 
  \sE'|_{X_F^{\mathrm{sch}}\setminus D_F} 
  \xra{\phi'^{-1}} 
  \sE_{b'}|_{X_F^{\mathrm{sch}}\setminus D_F}
 \] 
 respects the filtrations 
 $\Fil_{\lambda} \sE_b |_{X_F^{\mathrm{sch}}\setminus D_F}$ and $\Fil_{\lambda} \sE_{b'} |_{X_F^{\mathrm{sch}}\setminus D_F}$.
\end{itemize}

\begin{rem}
The condition that 
$\phi'$ and $f$ are compatible with 
$\Fil_{\lambda} \sE_b$ and $\Fil_{\lambda} \sE_{b'}$ 
is independent of choice of 
an isomorphism $\sE_{b} \xrightarrow{\sim} \sE$, 
because the automorphism group $\wt{J}_b$ of 
$\sE_{b}$ respects the filtration 
$\Fil_{\lambda} \sE_b$. 
\end{rem}

For $\mu \in X_* (T)$, 
we put 
\[
 \ol{\mu} = 
 \frac{1}{[\Gamma :\Gamma_{\mu}]} 
 \sum_{\tau \in \Gamma / \Gamma_{\mu}} 
 \tau (\mu) , 
\]
where $\Gamma_{\mu}$ is a stabilizer of $\mu$ 
in $\Gamma$, 
and let $\mu^{\natural}$ denote 
the image of $\mu$ in $\pi_1 (G)_{\Gamma}$. 

\begin{defn}(\cf \cite[Definition 2.5]{RVlocSh})
We say that 
$[b] \in B(G)$ is acceptable for 
$(\mu,[b'])$ 
if $\nu_b -\nu_{b'} \leq \ol{\mu}$. 
We say that 
$[b] \in B(G)$ is neutral for 
$(\mu,[b'])$ 
if $\kappa_G ([b]) -\kappa_G ([b']) = \mu^{\natural}$.
\end{defn}

Let $B(G,\mu,[b'])$ be the set of 
acceptable neutral elements in $B(G)$ 
for $(\mu,[b'])$.

\begin{rem}
The set $B(G,\mu,[b'])$ is a twisted analogue of the set $B(G,\mu)$, the latter due to Kottwitz. We refer the reader to \cite[\S 6.2]{KotIsoII} for this definition. 
\end{rem}
To state our main results we need the notion of Hodge--Newton reducibility.

\begin{defn}(\cf \cite[Definition 4.28]{RVlocSh})
A triple $([b],[b'],\mu)$ such that 
$[b] \in B(G,\mu,[b'])$ and $b'$ is basic 
is called Hodge--Newton reducible, 
if there is a standard proper 
Levi subgroup $L$ of $G$ and 
$[b_0] ,[b_0'] \in B(L)$ 
such that
$[b]$ and $[b']$ are the images of $[b_0]$ 
and $[b_0']$ respectively, 
$\mu$ factors through $L$, 
$[b_0] \in B(L,\mu,[b'_0])$ and 
the action of $\nu_{b_0}$ on $R_{\mathrm{u}}(B)$ is non-negative. 
\end{defn}

\begin{lem}\label{lem:DVRquot}
Let $R$ be a DVR with the maximal ideal $\fm$, and 
$M$ be an $R$-module such that 
$M \simeq \bigoplus_{1 \leq i \leq n} R/\fm^{k_i}$, 
where $k_1 \geq \cdots \geq k_n$ 
is a sequence of non-negative integers. 
Let $N$ be a quotient of 
$M$ generated by $j$ elements, 
where $j \leq n$. 
Then we have 
$l (N) \leq k_1 + \cdots + k_j$. 
Further, if the equality holds, 
then $N$ is a direct summand of $M$. 
\end{lem}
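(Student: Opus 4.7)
The plan is to invoke the structure theorem for finitely generated modules over the DVR $R$ to write $N \simeq \bigoplus_{i=1}^j R/\fm^{l_i}$ with $l_1 \geq \cdots \geq l_j \geq 0$ (padding with zero terms if $N$ is generated by fewer than $j$ elements). Then $l(N) = l_1 + \cdots + l_j$, and the length bound reduces to showing termwise
\[
 l_i \leq k_i \quad \text{for every } 1 \leq i \leq j.
\]

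I would prove this by comparing $\fm$-adic graded pieces. Let $\pi \colon M \to N$ be the given surjection. Since $\pi(\fm^k M) = \fm^k N$, it induces a surjection
\[
 \fm^k M / \fm^{k+1} M \twoheadrightarrow \fm^k N / \fm^{k+1} N
\]
of $R/\fm$-vector spaces for every $k \geq 0$. The left-hand side has dimension $\#\{1 \leq m \leq n : k_m > k\}$, the right-hand side has dimension $\#\{1 \leq m \leq j : l_m > k\}$, so
\[
 \#\{m \leq j : l_m > k\} \leq \#\{m \leq n : k_m > k\} \qquad (k \geq 0).
\]
If $l_i > k_i$ for some $i$, then taking $k = k_i$ the left side is $\geq i$ (as $l_1 \geq \cdots \geq l_i > k_i$), while the right side is $\leq i-1$ (since $k_m > k_i$ forces $m < i$), a contradiction.

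For the equality case, I would argue by induction on $j$. Assume $l_i = k_i$ for all $i \leq j$. Lift the first generator $e_1$ of $N$ to any $\tilde{e}_1 = (x_1, \ldots, x_n) \in M$; the annihilator of $\tilde{e}_1$ is $\fm^m$ with $m = \max_i(k_i - v(x_i)) \leq k_1$, and it must be contained in $\mathrm{ann}(e_1) = \fm^{k_1}$, so $m = k_1$ and $x_1 \in R^\times$. The change of basis $f_1 \mapsto \tilde{e}_1 / x_1$ produces a decomposition $M = R\tilde{e}_1 \oplus M'$ with $M' = \bigoplus_{i=2}^n R/\fm^{k_i}$ and $\pi|_{R\tilde{e}_1}$ an isomorphism onto $R e_1$. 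Pushing forward, the composite $M' \hookrightarrow M \twoheadrightarrow N \twoheadrightarrow N/R e_1 = \bigoplus_{i=2}^j R/\fm^{l_i}$ is again surjective, still satisfying the equality condition with $j-1$ generators, so by induction it admits a splitting $s' \colon N/Re_1 \to M'$. Defining $s(e_i) = s'(\bar e_i) - a_i \tilde{e}_1$ for $i \geq 2$, where $a_i \in R$ is chosen so that $\pi(s'(\bar e_i)) = e_i + a_i e_1$, and $s(e_1) = \tilde{e}_1$, gives the desired splitting of $\pi$; the relations $\fm^{l_i} s(e_i) = 0$ are automatic since the component of $s(e_i)$ in $M'$ already has annihilator $\fm^{l_i}$.

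The conceptually subtle step is the equality case: one must verify that in the equality regime, any lift of $e_1$ automatically generates a direct summand of $M$ of the correct length (here the hypothesis $k_i \leq k_1$ for all $i$ is exactly what forces $x_1$ to be a unit), which is what allows the induction to reduce to a smaller pair. The main routine obstacle is bookkeeping the adjustments needed so that the inductively constructed map actually splits $\pi$ on the nose.
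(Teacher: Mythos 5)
Your proof is correct and essentially self-contained, but it takes a genuinely different route from the paper's: the paper disposes of the lemma in one line by invoking \cite[Lemma 3.2]{HanHarr} and passing to Pontryagin duals (which exchanges quotients of $M$ for submodules of a module of the same shape), whereas you give a direct from-scratch argument via the graded-piece comparison $\dim_{R/\fm}\fm^kM/\fm^{k+1}M \geq \dim_{R/\fm}\fm^kN/\fm^{k+1}N$ for the inequality, and an induction on $j$ for the equality case. Your approach has the advantage of not depending on an external reference; the paper's is shorter but outsources the content. Two small imprecisions are worth fixing. First, the conclusion $m = k_1$ only yields that some coordinate $x_{i_0}$ with $k_{i_0}=k_1$ is a unit; since then $k_1=\cdots=k_{i_0}$ you may permute those coordinates to arrange $i_0=1$, but the reindexing should be stated. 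Second, your stated reason that $\fm^{l_i}s(e_i)=0$ (``the component of $s(e_i)$ in $M'$ already has annihilator $\fm^{l_i}$'') accounts for only one of the two components of $s(e_i)\in M'\oplus R\tilde e_1$; the missing step is that applying $\fm^{l_i}$ to $\pi(s'(\bar e_i))=e_i+a_ie_1$, together with $\fm^{l_i}s'(\bar e_i)=0$, forces $\fm^{l_i}a_ie_1=0$ in $N$, hence $v(a_i)\geq k_1-l_i$ and so $\fm^{l_i}a_i\tilde e_1=0$ as well. With these two routine adjustments the argument is complete.
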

\begin{proof}
This follows from \cite[Lemma 3.2]{HanHarr} 
by taking the Pontryagin dual. 
\end{proof}

The following proposition is a slight generalization of 
\cite[Theorem 3.1]{HanHarr}, 
where the slope of a semi-stable bundle 
is assumed to be zero. 

\begin{prop}\label{prop:vecss}
Assume that $G=\GL_n$. 
Let $(k_1 \geq \cdots \geq k_n)$ 
be the sequence of integers 
corresponding to $\mu \in X_*(T)^+$. 
Let $(R,R^+)$ be a perfectoid affinoid $\ol{\bF}_q$-algebra. 
Let 
\[
  f \colon 
  \sE|_{X_R^{\sch} \setminus D} \stackrel{\sim}{\lra} 
  \sE'|_{X_R^{\sch} \setminus D}
\]
be a modification of between $G$-bundles 
$\sE$ and $\sE'$ over $X_R^{\sch}$ along an 
effective Cartier divisor of degree $1$ 
which is equal to 
$\mu$ geometric fiberwisely. 
We view $\sE$ and $\sE'$ 
as vector bundles of rank $n$. 
Let $\sE^+$ be a saturated sub-vector bundle of $\sE$ 
such that 
\begin{equation}\label{eq:degEk}
 \deg (\sE_x^+ ) + 
 \sum_{1 \leq j \leq \rk (\sE^+)} 
 k_{n+1-j} = \rk (\sE^+) s  
\end{equation}
for every point $x$ of $Spa (R,R^+)$. 

Assume that 
$\sE'$ is semi-stable of slope $s$ geometric fiberwisely. 
Let 
$j \colon X_R^{\sch} \setminus D \to X_R^{\sch}$ 
be the open immersion. 
We put 
\[
 \sE'^+ =j_* f(j^*\sE^+ ) \cap \sE' . 
\]
Then 
$\sE'^+$ is a semi-stable 
vector bundle of slope $s$ such that 
$\rk (\sE'^+) =\rk (\sE^+)$. 
\end{prop}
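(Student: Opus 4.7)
The plan is to first verify fiberwise that $\sE'^+$ is a saturated rank-$r$ sub-vector bundle of $\sE'$, then pin down its degree at each geometric fiber by combining a local lattice-theoretic computation at $D$ (via Lemma~\ref{lem:DVRquot}) with the semi-stability hypothesis on $\sE'$ and the hypothesis \eqref{eq:degEk}.

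Fix a geometric point $x \in \Spa(R,R^+)$ with residue field $F$, set $r := \rk(\sE^+)$, and let $R_0$ denote the DVR given by the local ring of $X_F^{\sch}$ at the point underlying $D_x$, with uniformizer $\pi$ and fraction field $K$. Let $M$, $M'$, $N$ be the stalks at $R_0$ of $\sE_x$, $\sE'_x$, $\sE^+_x$; then $M, M'$ are free $R_0$-lattices of rank $n$ inside $V := M \otimes_{R_0} K = M' \otimes_{R_0} K$ (identified via $f$), and $N$ is a rank-$r$ direct summand of $M$. By the definition of $\sE'^+$, its stalk at $R_0$ is $N' := M' \cap (N \otimes_{R_0} K)$. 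Since $M'/N'$ is torsion-free and finitely generated over the DVR $R_0$, it is free of rank $n-r$, so $N'$ is a direct summand of $M'$ of rank $r$. Combined with the fact that $\sE'^+$ coincides with $f(\sE^+)$ on $X_F^{\sch} \setminus D_x$, this shows that $\sE'^+$ is a rank-$r$ sub-vector bundle of $\sE'$.

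For the degree, I would first reduce to the case $M \subseteq M'$: replacing $\sE'$ with $\sE'(-cD)$ for a suitable integer $c \leq k_n$ shifts each $k_i$ to $k_i - c$ and the slope $s$ to $s-c$, while the construction $\sE'^+ \mapsto \sE'^+(-cD)$ is compatible with the defining formula; both \eqref{eq:degEk} and the target equality are preserved under this operation. In the reduced setting the Cartan decomposition gives $M'/M \cong \bigoplus_{i=1}^n R_0/\pi^{k_i}$ with $k_1 \geq \cdots \geq k_n \geq 0$. Because $N$ is a direct summand of $M$, one checks that $N' \cap M = N$, so the natural map $M \to M'/N'$ has kernel $N$ and fits into the short exact sequence
\[
 0 \to M/N \to M'/N' \to M'/(M+N') \to 0
\]
in which both $M/N$ and $M'/N'$ are free $R_0$-modules of rank $n-r$. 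In particular $M'/(M+N')$ is a torsion $R_0$-module generated by $n-r$ elements, and it is simultaneously a quotient of $M'/M \cong \bigoplus R_0/\pi^{k_i}$. Applying Lemma~\ref{lem:DVRquot} to this quotient gives
\[
 l \bigl( M'/(M+N') \bigr) \leq k_1 + \cdots + k_{n-r} ,
\]
and subtracting from $l(M'/M) = \sum_{i=1}^n k_i$ yields
\[
 \deg(\sE'^+_x) - \deg(\sE^+_x) = l(N'/N) \geq \sum_{j=1}^r k_{n+1-j} .
\]

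Combining with \eqref{eq:degEk} gives $\deg(\sE'^+_x) \geq rs$, while the semi-stability of $\sE'_x$ of slope $s$ forces $\deg(\sE'^+_x) \leq rs$, so $\deg(\sE'^+_x) = rs$. Since $\sE'^+$ is a rank-$r$ sub-bundle of the semi-stable bundle $\sE'$ of the same slope, every sub-bundle of $\sE'^+$ has slope at most $s$, so $\sE'^+$ is itself semi-stable of slope $s$, proving the proposition. \emph{The main obstacle} is the local lattice computation: the key observation is that $M'/(M+N')$ is generated by only $n-r$ elements rather than $n$, which is precisely what lets Lemma~\ref{lem:DVRquot} produce the sharp bound $\sum_{j=1}^r k_{n+1-j}$ matching \eqref{eq:degEk}; the sign and ordering conventions for $(k_i)$ must also be tracked carefully, which is why one first reduces to $M \subseteq M'$.
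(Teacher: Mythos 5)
Your degree computation matches the paper's argument essentially exactly, just phrased locally: your torsion module $M'/(M+N')$ is precisely the stalk of the paper's cokernel $Q^{-} = \Coker\bigl(f^{-}\colon \sE^{-}\to\sE'^{-}\bigr)$, and the chain
\[
 l(N'/N) \;=\; l(M'/M) - l\bigl(M'/(M+N')\bigr) \;\geq\; \sum_{i} k_i - \sum_{i \leq n-r}k_i \;=\; \sum_{j\leq r} k_{n+1-j}
\]
is the same application of Lemma~\ref{lem:DVRquot} that the paper makes (bound $l(Q^-)$ above because $Q^-$ is a quotient of a rank-$(n-r)$ bundle, then deduce a lower bound on $l(Q^+)$). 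The preliminary twist by $\cO(-cD)$ to force $M\subseteq M'$ is the same normalization as the paper's twist of $\sE'$ by $\cO(N)$, just with the opposite sign convention. So the core of the argument is correct and follows the paper's route.

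The one genuine gap is the passage from the fiberwise statement to the statement over $X_R^{\sch}$ for a general perfectoid $R$. Everything you prove is a statement on $X_F^{\sch}$ for a single geometric point $x = \Spa(F,F^+)$, and then you assert "this shows that $\sE'^+$ is a rank-$r$ sub-vector bundle of $\sE'$" — but the subsheaf $\sE'^{+} = j_* f(j^*\sE^+) \cap \sE'$ is defined on $X_R^{\sch}$, and having vector-bundle fibers of constant rank and degree over geometric points does not by itself make it a vector bundle in the family. The paper handles exactly this step by first proving the statement in the case that $R$ is a perfectoid field and then citing the reduction argument of \cite[\S 3.2]{HanHarr} for general $R$; your proof omits any mention of this reduction, so as written it only establishes the perfectoid-field case.
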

\begin{proof}
We follow arguments in the proof of 
\cite[Theorem 3.1]{HanHarr}. 

Take a modification 
$f_1 \colon \cO|_{X_R^{\sch} \setminus D} \xra{\sim} \cO (1)|_{X_R^{\sch} \setminus D}$ of degree $1$ along $D$. 
For a large $N$, changing 
$\sE'$, $f$ and 
$(k_1 ,\ldots ,k_n)$ by 
$\sE' (N)$, 
\[
 (\id_{\sE'} \otimes f_1^{\otimes N}) \circ f \colon 
 \sE |_{X_R^{\sch} \setminus D} \stackrel{\sim}{\lra} 
 \sE'(N)|_{X_R^{\sch} \setminus D} 
\]
and 
$(k_1 +N ,\ldots ,k_n +N )$ respectively, 
we may assume that $f$ 
extends to an injective morphism 
$f \colon \sE \to \sE'$, 
which induces a morphism 
$f^+ \colon \sE^+ \to \sE'^+$. 
We put 
$\sE^- =\sE / \sE^+$ and 
$\sE'^- =\sE' / \sE'^+$. 
Let $f^- \colon \sE^- \to \sE'^-$ 
be the morphism induced by $f$. 

First, we treat the case where 
$R$ is a perfectoid field. 
In this case, 
$\sE'^+$ and $\sE'^-$ are vector bundles such that 
$\rk (\sE'^+) = \rk (\sE^+)$ and 
$\rk (\sE'^-) = \rk (\sE^-)$.  
Let $Q^+$ and $Q^-$ be the cokernel of 
$f^+$ and $f^-$ respectively. 
Then we have 
\[
 l (Q^-) \leq \sum_{1 \leq i \leq \rk (\sE^-) } 
 k_i 
\] 
by Lemma \ref{lem:DVRquot}, 
since $Q^-$ is generated by $\rk (\sE^-)$-elements. 
Hence we have 
\[
 l (Q^+) \geq \sum_{1 \leq j \leq \rk (\sE^+) } 
 k_{n+1-j} . 
\]
By this and \eqref{eq:degEk}, 
we have 
\[
 \deg (\sE'^+ )= 
 \deg (\sE^+) +l (Q^+) \geq 
 \rk (\sE^+) s . 
\]
On the other hand, we have 
$\deg (\sE'^+ ) \leq \rk (\sE^+) s$, 
since $\sE'$ is semi-stable. 
Therefore, 
$\sE'^+$ is a semi-stable vector bundle of 
slope $s$. 

The general case is reduced to the above case 
by the same argument as in \cite[\S 3.2]{HanHarr}. 
\end{proof}

\begin{lem}\label{lem:HOvan}
Let $(R,R^+)$ be a perfectoid affinoid $\ol{\bF}_q$-algebra. 
For any element $\alpha$ of 
$H_{\et}^1 (X_R^{\sch},\cO )$, 
there is a pro-etale extension 
$(R',R'^+)$ of $(R,R^+)$ such that 
the image of $\alpha$ in $H_{\et}^1 (X_{R'}^{\sch},\cO )$ 
is zero. 
\end{lem}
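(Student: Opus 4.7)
The plan is to reduce the statement to the vanishing of $H^1(X_C^{\sch}, \cO)$ for perfectoid fields $C$, which is a foundational result from Fargues-Fontaine theory, and then to spread this out via a pro-\'etale stalk argument.

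Since $\cO_{X_R^{\sch}}$ is a quasi-coherent sheaf on the scheme $X_R^{\sch}$, Grothendieck's comparison gives $H^1_{\et}(X_R^{\sch}, \cO) = H^1(X_R^{\sch}, \cO)$ in the Zariski sense, so we may work Zariski-locally throughout. When $R = C$ is a complete algebraically closed perfectoid $\ol{\bF}_q$-algebra, $X_C^{\sch}$ is the classical Fargues-Fontaine curve, for which $H^1(X_C^{\sch}, \cO) = 0$ by the foundational work of Fargues-Fontaine (this reflects the ``completeness'' of the curve).

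For general $(R, R^+)$, pick a geometric point $\bar{x} \colon \Spa(C, C^+) \to \Spa(R, R^+)$ and let $(R_i, R_i^+)$ range over the pro-\'etale neighborhoods of $\bar{x}$, so that $(C, C^+)$ is the completed colimit of this cofiltered system. I would then establish
\[
 \varinjlim_i H^1 \bigl( X_{R_i}^{\sch}, \cO \bigr) \xrightarrow{\sim} H^1 \bigl( X_C^{\sch}, \cO \bigr) = 0,
\]
using the standard two-open affine cover $X_R^{\sch} = U_1 \cup U_2$ (so that $H^1(X_R^{\sch}, \cO)$ is the cokernel of $\cO(U_1) \oplus \cO(U_2) \to \cO(U_1 \cap U_2)$) together with the fact that the three relevant period rings are compatible with filtered colimits of perfectoid bases. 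Consequently, the image of $\alpha$ in $H^1(X_{R_i}^{\sch}, \cO)$ vanishes for some index $i$; varying $\bar{x}$ over $\Spa(R, R^+)$ and taking the disjoint union of such neighborhoods yields a pro-\'etale cover $R \to R'$ on which $\alpha$ becomes zero, giving the desired extension.

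The main obstacle is the limit identification above: one must verify that the period rings appearing in the two-term affine cover of $X_R^{\sch}$ commute suitably with filtered colimits of perfectoid $\ol{\bF}_q$-algebras at geometric points. While each of these period rings is built from $R$ by a perfectoid / completion construction, their compatibility with filtered colimits up to the required level of $p$-adic (and $t$-adic) completion is essentially standard within the perfectoid formalism, so this step should go through without conceptual difficulty once unpacked.
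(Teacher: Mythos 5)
The paper's own proof is shorter and goes a different route: it observes that $H^1_{\et}(X_R^{\sch},\cO)$ parametrizes extensions of $\cO$ by $\cO$, and then quotes the relative classification theorem for vector bundles on the Fargues--Fontaine curve (\cite[6.3.1]{FaFoVbp} together with \cite[Theorem 2.26]{FarGover}) to conclude that such an extension --- which is geometrically fiberwise trivial because $H^1(X_C^{\sch},\cO)=0$ over an algebraically closed perfectoid field $C$ --- becomes split after a pro-\'etale extension. Your approach instead tries to push the fiberwise vanishing of $H^1$ directly up to the relative case via a stalk/colimit argument, avoiding the classification theorem altogether; if it worked this would be arguably more elementary, but the step you flag as "the main obstacle" is a genuine gap, not a routine verification.

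Concretely, the isomorphism
\[
 \varinjlim_i H^1\bigl(X_{R_i}^{\sch},\cO\bigr)
 \stackrel{\sim}{\lra}
 H^1\bigl(X_C^{\sch},\cO\bigr)
\]
requires, via the two-open \v{C}ech cover, that the period rings defining $X_R^{\sch}$ commute with filtered colimits along pro-\'etale neighborhoods \emph{followed by the completion} that produces $(C,C^+)$. These rings involve $p$-adic and $[\varpi]$-adic completions of $W(R^+)$-algebras and Frobenius eigenspaces thereof; completions and $\varphi$-eigenspaces do not commute with filtered colimits in any formal way, and indeed it is precisely because such statements fail in naive form that the theory relies on results like the relative classification theorem (whose proof addresses the passage from geometric fibers to a pro-\'etale neighborhood by hand, bundle by bundle, rather than by a general colimit principle). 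Absent either a reference establishing this commutation for the specific graded pieces $B_R^{\varphi=\pi^d}$, or an independent argument, the colimit identification is an unproved assertion, and the proof as written does not close. The paper's use of \cite[Theorem 2.26]{FarGover} is exactly the tool that packages and disposes of this difficulty, and I would recommend either invoking it directly (as the paper does) or supplying a precise reference for the colimit compatibility of the relevant period rings before relying on this stalkwise argument.
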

\begin{proof}
Any extension of $\cO$ by 
$\cO$ on $X_R^{\sch}$ 
splits after 
a pro-etale extension of 
$(R,R^+)$ 
by \cite[6.3.1]{FaFoVbp} and 
\cite[Theorem 2.26]{FarGover} (\cf \cite[Corollary 8.7.10]{KeLiRpHF} ). 
This implies the claim, 
since 
$H_{\et}^1 (X_R^{\sch},\cO )$ 
parametrize the extensions of $\cO$ by 
$\cO$ on $X_R^{\sch}$. 
\end{proof}

Assume that $b'$ is basic. 
Let $U$ be the unipotent radical of $P$. 
Note that we have a surjection 
\[
 P \lra P/U \simeq L, 
\]
where the second isomorphism is given by 
$L \hookrightarrow P \to P/U$. 

\begin{lem}\label{lem:EPredL}
Let $(R,R^+)$ be a perfectoid affinoid $\ol{\bF}_q$-algebra. 
Let $\sE_P$ a $P$-bundle on 
$X_R^{\sch}$ such that 
$\sE_P \times^P L \simeq \sE_{b'_0}$. 
Then we have an isomorphism 
$\sE_P \simeq \sE_{b'_0} \times^L P $ 
after 
a pro-etale extension of 
$(R,R^+)$. 
\end{lem}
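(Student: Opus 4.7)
The plan is to view $\sE_P$ as a reduction of structure group of the $L$-bundle $\sE_{b'_0}$ along the inclusion $L \hookrightarrow P$, and then to show that all such reductions become isomorphic to the split reduction $\sE_{b'_0} \times^L P$ after a pro-etale extension of $(R,R^+)$. Via the short exact sequence $1 \to U \to P \to L \to 1$, the set of isomorphism classes of $P$-reductions inducing $\sE_{b'_0}$ is a torsor under the non-abelian cohomology $H^1_{\et}(X_R^{\sch}, \cU)$, where $\cU := \sE_{b'_0} \times^L U$ is the twist of the unipotent radical by the adjoint $L$-action. It suffices to show that the class distinguishing $\sE_P$ from the split lift becomes trivial after a pro-etale extension.

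First I would use the lower central series $U = U^{(0)} \supset U^{(1)} \supset \cdots \supset U^{(m)} = 1$ given by $U^{(i+1)} = [U, U^{(i)}]$. Since $U$ is unipotent over the characteristic-zero field $E$, each successive quotient $U^{(i)}/U^{(i+1)}$ is an $L$-equivariant vector group, and by construction sits centrally in $U/U^{(i+1)}$. The standard devissage for non-abelian $H^1$ through central extensions reduces the problem inductively to showing that, for each $i$, any class in the abelian cohomology $H^1_{\et}(X_R^{\sch}, \sE_{b'_0} \times^L \mathrm{Lie}(U^{(i)}/U^{(i+1)}))$ can be killed pro-etale locally.

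The key computation is to identify these vector bundles as trivial. Since $[b']$ is basic in $G$, the Newton cocharacter $\nu_{b'}$ lies in $X_*(Z(G))_{\bQ}$, and as $[b'_0] \in B(L)$ maps to $[b'] \in B(G)$, the $G$-Newton cocharacter of $b'_0$ is again $\nu_{b'}$, hence central in $G$. Because $\mathrm{Lie}(U)$ is a direct sum of $G$-root spaces and $Z(G)$ acts trivially on each root space, the cocharacter $\nu_{b'_0}$ acts as zero on the $L$-subquotient $V := \mathrm{Lie}(U^{(i)}/U^{(i+1)}) \subset \mathrm{Lie}(U)$. Consequently the isocrystal $(V \otimes_E \breve{E}, \rho(b'_0)\sigma)$ is pure of slope zero, and by Dieudonne-Manin over $\breve{E}$ (whose residue field $\ol{\bF}_q$ is algebraically closed) is isomorphic to the trivial isocrystal of rank $\dim V$. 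The associated Fargues-Fontaine vector bundle $\sE_{b'_0} \times^L V$ on $X_R^{\sch}$ is therefore isomorphic to $\cO^{\dim V}$.

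With these trivializations established, Lemma \ref{lem:HOvan} supplies the required pro-etale extension at each step, since any class in $H^1_{\et}(X_R^{\sch}, \cO^{\dim V})$ is the data of finitely many classes in $H^1_{\et}(X_R^{\sch}, \cO)$, each of which can be killed pro-etale locally. Iterating up the central series and composing the resulting pro-etale extensions yields a single pro-etale extension over which $\sE_P$ becomes isomorphic to $\sE_{b'_0} \times^L P$. The main delicate point is the non-abelian devissage: one must work with the pointed exact sequences attached to the central extensions $1 \to \cU^{(i)}/\cU^{(i+1)} \to \cU/\cU^{(i+1)} \to \cU/\cU^{(i)} \to 1$, where $\cU^{(i)} := \sE_{b'_0} \times^L U^{(i)}$; but because each kernel is abelian and central, the standard formalism applies and the pro-etale vanishing of $H^1$ propagates cleanly through the induction.
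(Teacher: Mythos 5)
Your proof is correct and takes essentially the same route as the paper's, which (following \cite[Proposition 5.16]{FarGtor}) reduces the claim to pro-\'etale vanishing of $H^1$ of a twisted form of the unipotent radical, handled via the lower-central-series d\'evissage and Lemma~\ref{lem:HOvan}. The only cosmetic difference is that you twist $U$ by the $L$-bundle $\sE_{b'_0}$ rather than by the $P$-bundle $\sE_P$ itself; since $U$ acts trivially on the central subquotients the graded pieces coincide either way, and your explicit identification of them as $\cO^{\dim V}$ via centrality of $\nu_{b'}$ spells out what the paper compresses into ``semi-stable vector bundles of slope zero.''
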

\begin{proof}
We follow arguments in the proof of 
\cite[Proposition 5.16]{FarGtor}. 
Let $P$ act on $U$ by the conjugation. 
We put 
\[
 \sU = \sE_P \times^P U . 
\]
Then 
$H_{\et}^1 (X_R^{\sch},\sU)$ parametrizes 
the fiber of 
\[
 H_{\et}^1 (X_R^{\sch}, P) \lra H_{\et}^1 (X_R^{\sch},L) 
\]
over the image of $\sE_P$. 
Hence, it suffices to show that 
$H_{\et}^1 (X_R^{\sch},\sU)$ is trivial 
after 
a pro-etale extension of 
$(R,R^+)$. 
This follows from Lemma \ref{lem:HOvan}, 
since 
$\sU$ has a filtration whose 
graded subquotients are 
semi-stable vector bundles of slope zero. 
\end{proof}

\begin{lem} \label{lem:clHec}
Let $\mu_1,\mu_2 \in X_{*}(T)^{+}$ such that $\mu_1 \leq \mu_2$. 
Then $\Hecke^{\leq \mu_1} \subset \Hecke^{\leq \mu_2}$ 
is a closed substack. 
\end{lem}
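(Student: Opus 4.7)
The plan is to reduce the problem to the closedness of Schubert subdiamonds inside the $B_{\mathrm{dR}}^+$-affine Grassmannian. Being a closed substack is a property that can be checked after pulling back along any test map $\Spa(R,R^+) \to \Hecke^{\leq \mu_2}$ classifying a quadruple $(\sE, \sE', D, f)$: I must show that the locus in $\Spa(R,R^+)$ where $f$ has type bounded by $\mu_1$ at every geometric point is a closed subspace. After passing to a suitable pro-\'etale cover of $\Spa(R,R^+)$, one can trivialize both $\sE$ and $\sE'$ on an infinitesimal neighborhood of the moving Cartier divisor $D$, so the modification $f$ is then encoded by a morphism from $\Spa(R,R^+)$ to the Beilinson-Drinfeld style $B_{\mathrm{dR}}^+$-affine Grassmannian $\Gr_{G, \Div_X^1}$ constructed by Scholze-Weinstein.

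By the very definition of $\Hecke^{\leq \mu_2}$, this classifying map factors through the Schubert subdiamond $\Gr_{G,\Div_X^1}^{\leq \mu_2}$, and the condition of being bounded by $\mu_1$ geometric fiberwisely translates into the condition that the classifying map factors through the smaller Schubert subdiamond $\Gr_{G,\Div_X^1}^{\leq \mu_1}$. I would then invoke the standard fact that $\Gr_{G,\Div_X^1}^{\leq \mu_1} \hookrightarrow \Gr_{G,\Div_X^1}^{\leq \mu_2}$ is a closed embedding whenever $\mu_1 \leq \mu_2$ in the dominance order; this is part of the $B_{\mathrm{dR}}^+$-affine Grassmannian theory of Scholze-Weinstein (see their Berkeley lecture notes), and is the diamond analogue of the classical closedness of Schubert varieties in the usual affine Grassmannian. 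Pulling back this closed embedding along the classifying map produces the desired closed subspace of $\Spa(R,R^+)$, which is exactly the locus sitting over $\Hecke^{\leq \mu_1}$.

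The main technical obstacle is setting up the dictionary with the local affine Grassmannian rigorously: one must verify that on a suitable pro-\'etale cover the bundles $\sE$ and $\sE'$ can be simultaneously trivialized on an infinitesimal neighborhood of $D$ (via a Beauville-Laszlo argument together with local triviality of $G$-torsors on the relevant punctured discs), and that the fiberwise boundedness condition appearing in the definition of $\Hecke^{\leq \mu}$ coincides with factoring through the corresponding Schubert subdiamond. Once this dictionary is in place, closedness of Schubert subdiamonds in the Bruhat order gives the statement immediately.
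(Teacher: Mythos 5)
Your proposal is correct and follows essentially the same route as the paper. The paper first invokes \cite[Proposition 3.20]{FarGover} to reduce the closedness of $\Hecke^{\leq \mu_1} \subset \Hecke^{\leq \mu_2}$ to the closedness of the Schubert inclusion $\Gr_G^{\leq \mu_1} \subset \Gr_G^{\leq \mu_2}$ in the $B_{\mathrm{dR}}^+$-affine Grassmannian, and then cites the semi-continuity of $\lvert \Gr \rvert \to X_*(T)^+/\Gamma$ (from \cite[3.3.2]{FarGover} and \cite[Lemma 21.2.5]{SchLecB}) for that closedness. Your argument unpacks the same reduction by hand --- pro-\'etale trivialization of the bundles near $D$, Beauville-Laszlo, passage to the local Grassmannian --- and then appeals to the same Scholze-Weinstein closedness statement for Schubert subdiamonds, so the content and the key external inputs coincide; the paper simply packages the reduction step in a single citation.
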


\begin{proof}
By \cite[Proposition 3.20]{FarGover}, it 
is enough to prove 
$\Gr_G^{\leq \mu_1} \subset \Gr_G^{\leq \mu_2}$ 
is closed substack. 
The latter follows from 
the semi-continuity of the map 
$\lvert \Gr \rvert \to X_* (T)^+/\Gamma$ 
in \cite[3.3.2]{FarGover} 
(\cf \cite[Proposition 19.2.3]{ScWeBLp}). 
\end{proof}

We define a substack 
$\Hecke^{\mu}$ of 
$\Hecke^{\leq \mu}$ 
by requiring the condition that 
modifications are equal to $\mu$ 
geometric fiberwisely. 
Then $\Hecke^{\mu}$ is an open substack of 
$\Hecke^{\leq \mu}$ by Lemma \ref{lem:clHec}. 
We use similar definitions and notations 
also for other spaces. 

Let $X$ be a scheme over $E$. 
Let $\mathrm{FilVect}_{X}$ 
be the category of filtered vector bundles on 
$X$. 
We consider the functor 
\[
 \omega_{\lambda} \colon \Rep (G) \lra \mathrm{FilVect}_{X} ;\ 
 V \mapsto (V \otimes_E \cO_X , (\Fil_{\lambda} V) \otimes_E \cO_X) . 
\] 
Let $\mathrm{Fil}_{\lambda}\mathrm{Bun}^{G}_{X}$ 
be the category of 
functors 
$\omega \colon \Rep (G) \to \mathrm{FilVect}_{X}$ 
which are isomorphic to 
$\omega_{\lambda}$ fpqc locally on $X$.  
Let $\mathrm{Bun}^P_{X}$ 
be the category of 
$P$-bundles on $X$. 

\begin{lem}\label{lem:Pfil}
There is an equivalence of categories 
\[
 \mathrm{Fil}_{\lambda}\mathrm{Bun}^{G}_{X} \lra 
 \mathrm{Bun}^P_{X} ;\ 
 \omega \mapsto \ul{\Isom}^{\otimes}_{X} (\omega_{\lambda},\omega), 
\]
where 
$\ul{\Isom}^{\otimes}_{X} (\omega_{\lambda},\omega)$ 
is a functor from the category of schemes over $X$ 
to the category of sets 
which sends $X'$ to the set of isomorphisms 
$\omega_{\lambda}|_{X'} \to \omega|_{X'}$ 
as filtered tensor functors. 
\end{lem}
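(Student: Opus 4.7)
The plan is to set this up as the standard Tannakian dictionary for filtered fiber functors, as developed in \cite[IV, 2.1]{SaaCatT}. The crucial group-theoretic input is the identification
\[
 \ul{\Aut}^{\otimes}_{X} (\omega_{\lambda}) \simeq P_{X}
\]
of the $X$-group scheme of tensor automorphisms of $\omega_{\lambda}$ preserving every filtration $(\Fil_{\lambda} V) \otimes_{E} \cO_{X}$ with the parabolic subgroup $P$ associated to $\lambda$. This is essentially the defining property of $P$ from \cite[13.4.1]{SprLAG}: an element $g \in G$ lies in $P$ if and only if $\lim_{t \to 0} \lambda(t) g \lambda(t)^{-1}$ exists, which is equivalent to $g$ preserving $\Fil_{\lambda} V$ for every $V \in \Rep_{G}$.

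Granting this identification, the functor in the lemma is well-defined: for $\omega \in \mathrm{Fil}_{\lambda}\mathrm{Bun}^{G}_{X}$, the sheaf $\ul{\Isom}^{\otimes}_{X}(\omega_{\lambda},\omega)$ is non-empty fpqc locally by the hypothesis on $\omega$ and is a torsor under $\ul{\Aut}^{\otimes}_{X}(\omega_{\lambda}) \simeq P_{X}$, hence defines an object of $\mathrm{Bun}^{P}_{X}$. For the quasi-inverse, I would send a $P$-bundle $\sE_{P}$ to the filtered tensor functor
\[
 V \longmapsto \bigl( \sE_{P} \times^{P} (V \otimes_{E} \cO_{X}),\ \sE_{P} \times^{P} (\Fil_{\lambda} V \otimes_{E} \cO_{X}) \bigr),
\]
which makes sense because $\Fil_{\lambda} V \subset V$ is $P$-stable by the identification above, and tensor compatibility is clear since $\Fil_{\lambda}$ respects tensor products. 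After trivializing $\sE_{P}$ fpqc locally on $X$ this functor becomes identified with $\omega_{\lambda}$, so it does lie in $\mathrm{Fil}_{\lambda}\mathrm{Bun}^{G}_{X}$.

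Finally, I would check that the two constructions are mutually quasi-inverse. Going $\sE_{P} \mapsto \omega(\sE_{P}) \mapsto \ul{\Isom}^{\otimes}_{X}(\omega_{\lambda},\omega(\sE_{P}))$ recovers $\sE_{P}$ tautologically via the canonical map $\sE_{P} \to \ul{\Isom}^{\otimes}_{X}(\omega_{\lambda},\omega(\sE_{P}))$ of $P$-torsors, which is an isomorphism since both sides are $P$-torsors and the map is $P$-equivariant. Going $\omega \mapsto \sE_{P}(\omega) \mapsto \omega'$ recovers $\omega$ by evaluating the tautological $P$-torsor against each $V \in \Rep_{G}$, and matches the filtrations by construction. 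Both checks reduce by fpqc descent to the trivial case $\sE_{P} = P_{X}$, $\omega = \omega_{\lambda}$. The main obstacle is really only the group-theoretic identification $\Aut^{\otimes}(\omega_{\lambda}) = P$ of the first paragraph; once that is in hand the rest is the usual Tannakian formalism applied in families over $X$.
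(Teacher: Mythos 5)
Your proposal gives the standard Tannakian torsor argument, which is exactly what the paper outsources by citing \cite[Theorem 4.42 and Theorem 4.43]{ZieGrfilT}; so at the level of ideas the two proofs agree. The one place where your sketch quietly leans on something not quite contained in the reference you give is the identification $\ul{\Aut}^{\otimes}_{X}(\omega_{\lambda}) \simeq P_{X}$ as $X$-group schemes. Springer \cite[13.4.1]{SprLAG} characterizes $P = P(\lambda)$ via the limit $\lim_{t \to 0} \lambda(t) g \lambda(t)^{-1}$ at the level of points over a field, and what you actually need is the scheme-theoretic statement that the closed subgroup scheme of $G$ stabilizing every $\Fil_{\lambda} V$ equals $P$, so that it represents $\ul{\Aut}^{\otimes}_{X}(\omega_{\lambda})$ after base change to every $X'$. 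Since $E$ has characteristic zero this upgrade is automatic (the stabilizer is smooth and agrees with $P$ on geometric points), but this is precisely the kind of verification that Ziegler's Theorem 4.42 carries out carefully and in greater generality; that is the reason the paper cites Ziegler rather than Springer. With that point acknowledged, the remainder of your argument — $\ul{\Isom}^{\otimes}_{X}(\omega_{\lambda},\omega)$ is an fpqc-locally trivial $P_{X}$-torsor, the quasi-inverse $\sE_{P} \mapsto \bigl(V \mapsto (\sE_{P} \times^{P} V_{\cO_X},\ \sE_{P} \times^{P} \Fil_{\lambda} V_{\cO_X})\bigr)$, and the reduction of the two composite checks by fpqc descent to the trivial case — is the correct and complete skeleton of the equivalence.
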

\begin{proof}
This follows from \cite[Theorem 4.42 and Theorem 4.43]{ZieGrfilT}. 
\end{proof}

\begin{prop}\label{prop:redPf}
Assume that 
$([b],[b'],\mu)$ is Hodge--Newton reducible 
for $L$. 
Let $(R,R^+)$ be a perfectoid affinoid $\ol{\bF}_q$-algebra, and 
$(\sE, \sE', D, f) \in \Hecke_{b,b'}^{\mu} (R,R^+)$. 
Then, after taking a pro-etale extension of 
$(R,R^+)$, 
there is a reduction 
\[
 f_P \colon 
 \sE_P|_{X_R^{\sch} \setminus D} \stackrel{\sim}{\lra} 
 \sE'_P|_{X_R^{\sch} \setminus D}
\]
of $f$ to $P$ such that 
$\sE_P \simeq \sE_{b_0} \times^L P$ 
and 
$\sE'_P \simeq \sE_{b_0'} \times^L P$. 
\end{prop}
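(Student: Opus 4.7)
The plan is to construct $\sE_P$ by transport from the canonical $L$-reduction of $\sE_b$, and $\sE'_P$ by pushing the $\lambda$-filtration across $f$ via Proposition \ref{prop:vecss}, then trivializing the resulting $P$-bundle via Lemma \ref{lem:EPredL}.

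First I would work pro-etale locally on $\Spa(R,R^+)$ and fix an isomorphism $\phi \colon \sE_b \xrightarrow{\sim} \sE$. Since $[b]$ is represented by $b_0 \in L(\breve{E})$, we have $\sE_b \simeq \sE_{b_0} \times^L G$, which carries the canonical $P$-reduction $\sE_{b_0} \times^L P$; transporting via $\phi$ yields the required $\sE_P \simeq \sE_{b_0} \times^L P$. By Lemma \ref{lem:Pfil} this corresponds to a $\lambda$-filtered tensor fiber functor on $\Rep G$ valued in filtered vector bundles on $X_R^{\sch}$, i.e.\ a $\lambda$-filtration on each associated bundle $\sE(V)$.

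Next, for each $V \in \Rep G$ and each $\alpha$, I would apply Proposition \ref{prop:vecss} to the induced modification $f(V)$ with sub-bundle $\Fil_\lambda^\alpha \sE(V)$, working isotypically in $V$ under $Z(G)$ so that $\sE'(V) \simeq \sE_{b'}(V)$ is semi-stable (since $b'$ is basic in $G$). The key degree hypothesis \eqref{eq:degEk} --- that $\deg \Fil_\lambda^\alpha \sE(V)$ plus the sum of the bottom $\rk \Fil_\lambda^\alpha V$ weights of $\mu$ on $V$ equals $\rk \Fil_\lambda^\alpha V$ times the slope of $\sE'(V)$ --- reduces, using that $\mu$ and $\lambda$ commute (both factor through a common maximal torus, since $\mu$ factors through $L$), to the Kottwitz equality $\kappa_L([b_0]) - \kappa_L([b'_0]) = \mu^\natural$ in $\pi_1(L)_\Gamma$ paired against the $L$-characters of the $\lambda$-graded pieces of $V$. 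The proposition then produces saturated sub-bundles
\[
 \sE'(V)^\alpha := j_* f(V)\bigl(j^* \Fil_\lambda^\alpha \sE(V)\bigr) \cap \sE'(V),
\]
each semi-stable of the prescribed slope and rank, and its equality clause makes them direct summands of the semi-stable graded fiberwise.

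I would then verify that $V \mapsto (\sE'(V), \{\sE'(V)^\alpha\})$ is a filtered tensor fiber functor: compatibility with direct sums, duals and (crucially) tensor products follows from the naturality of $j_*$, $\cap$ and $f(-)$ together with the direct-summand property above, which is precisely what makes the saturation commute with tensor products. By Lemma \ref{lem:Pfil} this yields the $P$-reduction $\sE'_P$ of $\sE'$, with $f$ automatically descending to a $P$-modification $f_P$. The graded $L$-bundle $\sE'_P \times^P L$ is fiberwise isomorphic to $\sE_{b'_0}$; since $b'$ basic in $G$ forces $b'_0$ basic in $L$ (as $Z(G) \subset Z(L)$), this $L$-bundle lies in the basic semi-stable stratum of $\Bun_L$, the classifying stack of $\wt J_{b'_0}$, so pro-etale locally $\sE'_P \times^P L \simeq \sE_{b'_0}$. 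A final application of Lemma \ref{lem:EPredL} upgrades this to $\sE'_P \simeq \sE_{b'_0} \times^L P$, completing the proof. The main obstacle will be simultaneously verifying the numerical degree identity and the Tannakian tensor compatibility across all $V \in \Rep G$; the equality case of Proposition \ref{prop:vecss} (via Lemma \ref{lem:DVRquot}) is essential to both, as it delivers the direct-summand property that makes the saturation behave functorially.
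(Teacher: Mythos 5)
Your proposal follows essentially the same route as the paper's proof: trivialize $\sE$ pro-etale locally via $\phi$, take $\sE_P = \sE_{b_0} \times^L P$, push the $\lambda$-filtration across $f$ using Proposition~\ref{prop:vecss} applied isotypically so that $\sE'(V)$ is semi-stable, extend to a filtered tensor functor and invoke Lemma~\ref{lem:Pfil} to get $\sE'_P$, then apply Lemma~\ref{lem:EPredL}. The only place you differ is in adding more explicit commentary on why the degree hypothesis~\eqref{eq:degEk} is satisfied (reducing it to the Kottwitz equality in $\pi_1(L)_\Gamma$ from HN-reducibility) and on why the saturation construction is tensor-compatible (via the direct-summand property); the paper compresses both of these into the phrase ``functorial construction'' and the appeal to semi-simplicity of $\Rep_G$, so your version is a more detailed rendering of the same argument rather than a different one.
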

\begin{proof}
By taking a pro-etale extension of 
$(R,R^+)$, 
we can take an isomorphism 
$\sE_b  \simeq  \sE $. 
We put $\sE_P =\sE_{b_0} \times^L P$. 
Then $\sE_P$ and the isomorphism 
\[
 \sE_P \times^P G \cong 
 \sE_{b_0} \times^L G \cong \sE_b 
 \stackrel{\sim}{\lra} 
 \sE  
\]
give a reduction of $\sE$ to $P$. 
We put $\phi_P =\id_{\sE_{b_0} \times^L P}$. 
Then $\phi_P$ is a reduction of $\phi$ to $P$. 

For any irreducible $V \in \Rep (G)$, 
the vector bundle $\sE'(V)$ is semi-stable 
geometric fiberwisely. 
By Proposition \ref{prop:vecss}, 
we have a functorial construction of 
a filtration of $\sE'(V)$ 
that is compatible under 
$f(V)$ with the filtration 
of $\sE(V)$ coming from $\sE_P$ by Lemma \ref{lem:Pfil}. 
Since the category $\Rep (G)$ is semi-simple, 
the construction extends to 
all $V \in \Rep (G)$ in a functorial way. 
Hence, by Lemma \ref{lem:Pfil}, we have a reduction  
\[
 f_P \colon 
 \sE_P|_{X_R^{\sch} \setminus D} \stackrel{\sim}{\lra} 
 \sE'_P|_{X_R^{\sch} \setminus D}
\]
of $f$ to $P$ for some $P$-bundle $\sE'_P$. 
By Lemma \ref{lem:EPredL}, 
$\sE'_P$ is isomorphic to $\sE_{b_0'} \times^L P$ 
after taking a pro-etale extension of 
$(R,R^+)$. 
\end{proof}

Let $\wt{P}_{b'}$ be the 
stabilizer of $\Fil_{\lambda} \sE_{b'}$ 
in $\wt{J}_{b'}$. 
Then $\wt{P}_{b'} =\ul{P_{b'} (E)}$ 
for a parabolic subgroup $P_{b'}$ of 
$J_{b'}$.

\begin{prop}\label{prop:SPJ}
Assume that 
$([b],[b'],\mu)$ is Hodge--Newton reducible for $L$. 
Then the action of 
$\wt{P}_{b'}$ on $\Gr_{b,b'}^{\mu}$ 
stabilizes 
$\cC_{b,b'}^{\mu}$, 
and we have a natural 
$\wt{J}_{b'}$-equivariant isomorphism 
\[
 \cC_{b,b'}^{\mu} 
 \times^{\wt{P}_{b'}} \wt{J}_{b'} 
 \stackrel{\sim}{\lra} 
 \Gr_{b,b'}^{\mu} . 
\]
\end{prop}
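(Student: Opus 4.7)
First, I would verify stability of $\cC_{b,b'}^{\mu}$ under the action of $\wt{P}_{b'}$. Recall from \eqref{eq:ac} that the $\wt{J}_{b'}$-action sends a tuple $(\sE,\sE',D,f,\phi')$ to $(\sE,\sE',D,f,\phi'\circ g)$, so the composite $\phi'^{-1}\circ f$ appearing in the definition of $\cC_{b,b'}^{\mu}$ is replaced by $g^{-1}\circ \phi'^{-1}\circ f$. If the original composite respects $\Fil_{\lambda}\sE_b$ and $\Fil_{\lambda}\sE_{b'}$ and $g$ lies in $\wt{P}_{b'}$, then $g^{-1}$ preserves $\Fil_{\lambda}\sE_{b'}$, so the modified composite still does. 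This stability induces the natural $\wt{J}_{b'}$-equivariant morphism
\[
 \cC_{b,b'}^{\mu}\times^{\wt{P}_{b'}} \wt{J}_{b'}\lra \Gr_{b,b'}^{\mu}.
\]

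Next I would show this morphism is an isomorphism; by $\wt{J}_{b'}$-equivariance and pro-etale descent, it suffices to prove pro-etale local bijectivity on $(R,R^+)$-points. Injectivity on fibers is the stability argument run in reverse: if $(\sE,\sE',D,f,\phi')$ and its $g$-translate both lie in $\cC_{b,b'}^{\mu}$, then $g^{-1}$ must preserve $\Fil_{\lambda}\sE_{b'}$, so $g\in \wt{P}_{b'}$. For surjectivity, take a point of $\Gr_{b,b'}^{\mu}(R,R^+)$. Under the HN-reducibility hypothesis, Proposition \ref{prop:redPf} supplies, after a pro-etale extension of $(R,R^+)$, a reduction $f_P\colon \sE_P \to \sE'_P$ of $f$ to $P$ with $\sE_P \simeq \sE_{b_0}\times^L P$ and $\sE'_P \simeq \sE_{b'_0}\times^L P$. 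Since $[b']$ lies in the image of $B(L)\to B(G)$, the standard $P$-reduction of $\sE_{b'}$ is $\sE_{b'_0}\times^L P$, and the chosen identification $\sE'_P \simeq \sE_{b'_0}\times^L P$ extends via $(-)\times^P G$ to a new trivialization $\phi''\colon \sE_{b'} \xra{\sim} \sE'$. By Lemma \ref{lem:Pfil}, $\phi''$ pulls the filtration on $\sE'$ induced by $\sE'_P$ back to $\Fil_{\lambda}\sE_{b'}$; combined with the fact that $f_P$ is a reduction of $f$, the composite $\phi''^{-1}\circ f$ respects the filtrations, so $(\sE,\sE',D,f,\phi'')$ lies in $\cC_{b,b'}^{\mu}$. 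Writing $\phi'' = \phi'\circ g$ for the unique $g\in \wt{J}_{b'}$ provides the required preimage.

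The main obstacle is the surjectivity step, which rests entirely on Proposition \ref{prop:redPf} and hence on the HN-reducibility hypothesis: without a reduction of $f$ to $P$ there is no mechanism for producing a trivialization of $\sE'$ whose composite with $f$ respects $\Fil_{\lambda}$. A secondary care-point is to verify that the filtration on $\sE'$ induced by the $P$-reduction is exactly the one whose pullback under $\phi''$ equals $\Fil_{\lambda}\sE_{b'}$; this is the tensor-categorical dictionary of Lemma \ref{lem:Pfil} applied to the fact that $P$ is associated to $\lambda$ and that $\sE_{b'}$ is extended from $\sE_{b'_0}$ via $L\hookrightarrow G$.
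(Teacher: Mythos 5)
Your proposal follows essentially the same route as the paper: stability by direct inspection of the action, surjectivity (after a pro-\'etale cover) via Proposition \ref{prop:redPf}, and injectivity by checking that a $g$ carrying one point of $\cC_{b,b'}^{\mu}$ to another must preserve $\Fil_{\lambda}\sE_{b'}$. The surjectivity step is handled carefully and correctly; the dependence on HN-reducibility through Proposition \ref{prop:redPf} is exactly the crux, as you say.

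One small gap in the injectivity step: the defining condition of $\cC_{b,b'}^{\mu}$ is about the composite $\phi'^{-1}\circ f$ respecting the filtrations \emph{away from the Cartier divisor} $D$, since $f$ is only an isomorphism there. So what you get directly is that $g$ preserves $\Fil_{\lambda}\sE_{b'}$ on $X_R^{\sch}\setminus D$, not on all of $X_R^{\sch}$. To conclude $g\in\wt{P}_{b'}$ you still need to argue that the preservation extends across $D$; the paper does this by observing that $g$ is a global automorphism of $\sE_{b'}$, so $g\,\Fil_{\lambda}^i\sE_{b'}$ is a sub-bundle of $\sE_{b'}$ that agrees with $\Fil_{\lambda}^i\sE_{b'}$ on the dense open $X_R^{\sch}\setminus D$ and hence everywhere. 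Your phrase ``the stability argument run in reverse'' elides this extension step; it should be made explicit.
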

\begin{proof}
The first claim follows from the 
definitions of 
$\wt{P}_{b'}$ and $\Gr_{b,b'}^{\mu}$. 
The morphism 
\[
 \cC_{b,b'}^{\mu} 
 \times^{\wt{P}_{b'}} \wt{J}_{b'} 
 \lra 
 \Gr_{b,b'}^{\mu} 
\]
induced by the action of $\wt{J}_{b'}$ on 
$\Gr_{b,b'}^{\mu}$ is an epimorphism 
by Proposition \ref{prop:redPf}. 

We show the injectivity. 
Let $g \in \wt{J}_{b'} (R,R^+)$ for  
a perfectoid affinoid $\ol{\bF}_q$-algebra 
$(R,R^+)$. 
Assume that 
$g$ sends a point of 
$\cC_{b,b'}^{\mu} (R,R^+)$ 
to a point of $\cC_{b,b'}^{\mu} (R,R^+)$. 
Then 
$g$ stabilizes $\Fil_{\lambda} \sE_{b'}$ 
outside the Cartier divisor corresponding to $R^{\sharp}$. 
This implies 
$g$ stabilizes 
$\Fil_{\lambda} \sE_{b'}$ on $X_R^{\mathrm{sch}}$, 
since $g$ stabilizes $\sE_{b'}$ itself. 
Hence, we have $g \in \wt{P}_{b'} (R,R^+)$. 
\end{proof}

Let 
$\cP_{b,b'}^{\mu}$ 
be the inverse image of 
$\cC_{b,b'}^{\mu}$ 
under 
$\cM_{b,b'}^{\mu} \to \Gr_{b,b'}^{\mu}$. 

\begin{cor}\label{cor:MPJ}
Assume that 
$([b],[b'],\mu)$ is Hodge--Newton reducible for $L$. 
Then the action of 
$\wt{P}_{b'}$ on $\cM_{b,b'}^{\mu}$ 
stabilizes 
$\cP_{b,b'}^{\mu}$, 
and we have a natural 
$( \wt{J}_b \times \wt{J}_{b'})$-equivariant 
isomorphism 
\[
 \cP_{b,b'}^{\mu} 
 \times^{\wt{P}_{b'}} \wt{J}_{b'} 
 \stackrel{\sim}{\lra} 
 \cM_{b,b'}^{\mu} . 
\]
\end{cor}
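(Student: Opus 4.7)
The plan is to deduce the corollary directly from Proposition \ref{prop:SPJ} by base change along the $\wt{J}_b$-torsor $\pi \colon \cM_{b,b'}^{\mu} \to \Gr_{b,b'}^{\mu}$. The key observation is that this torsor is $\wt{J}_{b'}$-equivariant by construction, and that the two actions of $\wt{J}_b$ and $\wt{J}_{b'}$ on $\cM_{b,b'}^{\mu}$ commute (the former acts along the torsor structure, the latter is inherited from its action on $\Gr_{b,b'}^{\mu}$).

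First, I would verify the stabilization claim. Since $\cP_{b,b'}^{\mu} = \pi^{-1}(\cC_{b,b'}^{\mu})$ by definition, the $\wt{J}_{b'}$-equivariance of $\pi$, combined with the fact that $\wt{P}_{b'}$ stabilizes $\cC_{b,b'}^{\mu}$ by Proposition \ref{prop:SPJ}, immediately yields that $\wt{P}_{b'}$ stabilizes $\cP_{b,b'}^{\mu}$.

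Next, I would produce the natural morphism by restricting the $\wt{J}_{b'}$-action map $\cM_{b,b'}^{\mu} \times \wt{J}_{b'} \to \cM_{b,b'}^{\mu}$ to $\cP_{b,b'}^{\mu} \times \wt{J}_{b'}$; this restriction descends to the contracted product, giving a canonical morphism
\[
 \cP_{b,b'}^{\mu} \times^{\wt{P}_{b'}} \wt{J}_{b'} \lra \cM_{b,b'}^{\mu}.
\]
It is $\wt{J}_{b'}$-equivariant by construction and $\wt{J}_b$-equivariant because the $\wt{J}_b$- and $\wt{J}_{b'}$-actions commute. To see it is an isomorphism, I would consider the diagram
\[
 \xymatrix{
 \cP_{b,b'}^{\mu} \times^{\wt{P}_{b'}} \wt{J}_{b'}
 \ar@{->}[r] \ar@{->}[d]
 & \cM_{b,b'}^{\mu} \ar@{->}[d]^-{\pi} \\
 \cC_{b,b'}^{\mu} \times^{\wt{P}_{b'}} \wt{J}_{b'}
 \ar@{->}[r]^-{\sim}
 & \Gr_{b,b'}^{\mu},
 }
\]
in which the bottom arrow is the isomorphism supplied by Proposition \ref{prop:SPJ}, the right arrow is the $\wt{J}_b$-torsor $\pi$, and the left arrow is a $\wt{J}_b$-torsor because contracted product with the $\wt{P}_{b'}$-stable subspace $\cP_{b,b'}^{\mu}$ commutes with base change by $\pi$. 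Since the top arrow is a map of $\wt{J}_b$-torsors over $\Gr_{b,b'}^{\mu}$ covering an isomorphism, it is itself an isomorphism.

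I do not anticipate a serious obstacle: the argument is essentially formal once the compatibility of the two commuting actions is set up, and the only point requiring any care is the verification that the contracted product commutes with the torsor base change in the category of diamonds, which follows from standard diamond descent.
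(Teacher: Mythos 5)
Your proposal is correct and takes essentially the same approach as the paper, which simply notes that the corollary follows from Proposition \ref{prop:SPJ}. Your write-up usefully unpacks the torsor base-change argument (showing the natural map is a morphism of $\wt{J}_b$-torsors covering the isomorphism from Proposition \ref{prop:SPJ}) that the paper's one-line proof leaves implicit.
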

\begin{proof}
This follows from 
Proposition \ref{prop:SPJ}. 
\end{proof}

We define a subsheaf $\wt{J}_{b}^{U}$ of 
$\wt{J}_b$ by 
\[
 \wt{J}_{b}^{U} (S) = 
 \Bigl\{ g \in \wt{J}_b (S) \Bigm| g|_{\Fil_{\lambda}^j 
 \sE_b} \equiv \id_{\Fil_{\lambda}^j \sE_b} \mod \Fil_{\lambda}^{j+1} \sE_b \ 
 \textrm{for all $j$} \Bigr\} 
\]
for $S \in \Perf_{\ol{\bF}_q}$. 

Let $U_{b'}$ be the unipotent radical of $P_{b'}$. 
The inner form of $L$ determined by $b'$ gives 
a Levi subgroup $L_{b'}$ of $P_{b'}$. 

We use a notation that 
\[
 \gr_{\lambda}^i =\Fil_{\lambda}^i / \Fil_{\lambda}^{i+1} 
\]
for any integer $i$. 
Let $\rho_U$ be the half-sum of the positive roots $\alpha$ of $T$ such that $-\alpha$ occurs in the adjoint action of $T$ on $\Lie(U)$. 
We put 
$N_{U,b} = \langle 2 \rho_U , \nu_b \rangle$. 

\begin{defn} \label{defn:lcontractible}
Let $F$ be a non-archimedean field with a
valuation subring $F^+$. 
Let $f \colon D \to \Spa(F,F^+)^{\diamond}$ be 
an $\ell$-cohomologically smooth morphism of locally spatial diamonds 
(\cf \cite[Definition 23.8]{SchEtdia}). 
We say that $D$ is $\ell$-contractible of pure dimension $d$ 
if $f^! \bF_{\ell} = \bF_{\ell} (d)[2d]$ and the trace morphism 
$Rf_! f^! \bF_{\ell} \to \bF_{\ell}$ is a quasi-isomorphism. 
\end{defn}

\begin{rem}
In the situation of Definition \ref{defn:lcontractible}, we have $f_{\natural}\bF_{\ell} \cong Rf_{!}f^{!}\bF_{\ell}$ by \cite[Proposition VII.5.2]{FaScGeomLLC}. 
\end{rem}

Let $\varpi$ be a uniformizer of $E$. Let $\bB$ denote the v-sheaf on $\Perf_{\bF_{q}}$ given by $\bB(S) = \cO(Y_{S})$ (\cf \cite[Proposition II.2.1]{FaScGeomLLC}). 

\begin{lem} \label{lem:Bcontr}
Let $d$ and $h$ be positive integers. 
Let $f_{d,h} \colon \bB^{\varphi^d =\varpi^h} \times \Spa({\breve{E}})^{\diamond} \to \Spa({\breve{E}})^{\diamond}$ be the natural morphism. 
\begin{enumerate}
\item\label{en:Bcontr}
The v-sheaf $\bB^{\varphi^d =\varpi^h} \times \Spa({\breve{E}})^{\diamond}$ is 
an $\ell$-cohomologically smooth $\ell$-contractible locally spatial diamond 
of pure dimension $h$ over $\Spa({\breve{E}})^{\diamond}$. 
\item\label{en:BactE} 
The action of 
$E^{\times}$ on $f_{d,h,!} \bZ_{\ell}$ is given by $|| \cdot ||^{-d}$. 
\item\label{en:Badd}
Let $F$ be a perfectoid field over $\breve{E}$ and 
$a \in \bB^{\varphi^d =\varpi^h}(F^{\flat})$. 
Let $f_{d,h,F^{\flat}} \colon \bB^{\varphi^d =\varpi^h} \times \Spa(F^{\flat}) \to \Spa(F^{\flat})$ denote the base change of $f_{d,h}$. 
Then the action of $a$ on $f_{d,h,F^{\flat},!} \bZ_{\ell}$ induced by the addition on 
$\bB^{\varphi^d =\varpi^h}$ is trivial. 
\end{enumerate}
\end{lem}
\begin{proof}
Replacing $E$ by the unramified extension of degree $d$, we may assume that $d=1$ 
(\cf \cite[Remarque 4.2.2]{FaFoCfv}). 
We proceed by induction on $h \geq 1$. For $h=1$, 
the diamond $\bB^{\varphi = \varpi} \times \Spa({\breve{E}})^{\diamond}$ 
is isomorphic to 
$\Spa (\bF_q [[x^{1/p^{\infty}}]]) \times \Spa({\breve{E}})^{\diamond}$ by \cite[1.5.3]{FarGover}. 
The action of $\varpi$ on $\Spa (\bF_q [[x^{1/p^{\infty}}]]) \times \Spa({\breve{E}})^{\diamond}$ is induced from the morphism 
\[
 \Spa (\bF_q [[x^{1/q^m}]]) \to \Spa (\bF_q [[x^{1/q^m}]]) ;\ 
 x^{1/q^m} \mapsto x^{1/q^{m-1}} 
\]
of degree $q$ by taking limit with respect to $m \geq 0$. 
On the other hand, the action of $\cO_E^{\times}$ on 
$\Spa (\bF_q [[x^{1/p^{\infty}}]]) \times \Spa({\breve{E}})^{\diamond}$ is induced from an isomorphism on $\Spa (\bF_q [[x^{1/q^m}]])$ 
by taking limit with respect to $m \geq 0$. 
Further the addition of $a \in \Spa (\bF_q [[x^{1/p^{\infty}}]])(F^{\flat})$ 
on $\Spa (F^{\flat} [[x^{1/p^{\infty}}]])$ is 
induced from 
an isomorphism on $\Spa (\bF_q [[x^{1/q^m}]])$ 
by taking limit with respect to $m \geq 0$. 
Hence the claims hold for $h=1$ by \cite[Lemma 1.3]{ImaConv}. 

Assume that the result is true for $\bB^{\varphi = \varpi^{h-1}}$. 
We have an exact sequence 
\begin{equation}\label{eq:BAext}
    0 \lra \bB^{\varphi = \varpi^{h-1}} \times \Spa(\breve{E})^{\diamond} \lra \bB^{\varphi = \varpi^{h}} \times \Spa(\breve{E})^{\diamond} \lra \bA^{1,\diamond}_{\breve{E}} \lra 0 
\end{equation}
of diamonds which splits
pro-etale locally on $\bA^{1,\diamond}_{\breve{E}}$
as in 
\cite[Example 15.2.9 (4)]{ScWeBLp}. 
Therefore 
$\bB^{\varphi = \varpi^{h}} \times \Spa(\breve{E})^{\diamond}$ 
satisfies the claims \ref{en:Bcontr} and \ref{en:BactE}, since 
$\bA^{1,\diamond}_{\breve{E}}$ is 
an $\ell$-cohomologically smooth $\ell$-contractible diamond 
of pure dimension $1$ over $\Spa({\breve{E}})^{\diamond}$ 
and the action of $c \in E^{\times}$ on 
$\bA^{1,\diamond}_{\breve{E}}$ is induced from 
the isomorphism 
$\bA^{1}_{\breve{E}} \to \bA^{1}_{\breve{E}};\ x \mapsto cx$. 

The action of $a \in \bB^{\varphi = \varpi^{h}}(F^{\flat})$ 
on $f_{d,h,F^{\flat},!} \bZ_{\ell}$ depends only on  
the image $\overline{a} \in \bA^{1,\diamond}_{\breve{E}}(F^{\flat})$ of $a$ under \eqref{eq:BAext} 
since the claim \ref{en:Badd} is true for $\bB^{\varphi = \varpi^{h-1}}$. 
Hence it suffices to show that 
the action of $\overline{a}$ on 
$f_{\bA,!} \bZ_{\ell}$ is trivial, where 
$f_{\bA} \colon \bA^{1,\diamond}_{F} \to \Spa (F^{\flat})$ is the natural morphism. 
This follows from that the addition by 
$\overline{a}$ on 
$\bA^{1,\diamond}_{F}$ is induced from an automorphism on 
$\bA^{1}_{F}$ by \cite[Proposition 10.2.3]{ScWeBLp}. 
\end{proof}

Let $\delta_P \colon P(E) \to \ol{\bQ}_{\ell}^{\times}$ 
be the modulus character of $P(E)$. 
Let $A_b$ be the split center of $J_b$. 
Since $J_b$ is an inner form of $L^b$, 
we can view $A_b$ as an algebraic subgroup of $L^b$. 
We put $\delta_{P,A_b}=\delta_P|_{A_b(E)}$. 
Let $g \in J_b(E)$ act on $\wt{J}_b^U$ by 
the conjugation right action $u \mapsto g^{-1}ug$. 

\begin{lem}\label{lem:contr}
Let $f_J \colon \wt{J}_b^U \times \Spa({\breve{E}})^{\diamond} \to \Spa({\breve{E}})^{\diamond}$ be the natural morphism. 
\begin{enumerate}
\item\label{en:Jcontr} 
The functor $\wt{J}_b^U \times \Spa({\breve{E}})^{\diamond}$ 
is an $\ell$-cohomologically smooth $\ell$-contractible diamond of pure dimension $N_{U,b}$ over $\Spa({\breve{E}})^{\diamond}$. 
\item\label{en:JactE} 
Let $\kappa \colon J_b(E) \to \ol{\bQ}_{\ell}^{\times}$ 
be the character of the action of $J_b(E)$ on 
$f_{J,!} \ol{\bQ}_{\ell}$ induced by the conjugation right action of 
$J_b(E)$ on $\wt{J}_b^U$. 
Then we have $\kappa|_{A_b(E)}=\delta_{P,A_b}^{-1}$. 
\item\label{en:Jadd} 
Let $F$ be a perfectoid field over $\breve{E}$. 
Then the action of $\wt{J}_b^U(F^{\flat})$ on 
$f_{J,!} \ol{\bQ}_{\ell}$ induced by the addition on 
$\wt{J}_b^U$ is trivial. 
\end{enumerate}
\end{lem}
\begin{proof}
For $i \geq 0$, 
we define an algebraic subgroup $U_i$ of $P$ by 
\[
 U_i (R) = \Bigl\{ g \in P(R) \Bigm| g|_{\Fil_{\lambda}^j V_R} \equiv \id_{\Fil_{\lambda}^j V_R} \mod \Fil_{\lambda}^{j+i+1} V_R \ 
 \textrm{for all $j$ and $V \in \Rep (G)$} \Bigr\}  
\]
for any $E$-algebra $R$, where $V_R =V \otimes_E R$.  
Then $U_0=U$, 
and $U_i$ are normal in $P$ for all $i$. 
Similarly, we define 
a subsheaf $\wt{J}^U_{b,i}$ of $\wt{J}_b$ for $i \geq 0$ by 
\[
 \wt{J}^U_{b,i} (S)= \Bigl\{ g \in \wt{J}_b (S) \Bigm| g|_{\Fil_{\lambda}^j 
 \sE_b} \equiv \id_{\Fil_{\lambda}^j \sE_b} \mod \Fil_{\lambda}^{j+i+1} \sE_b \ 
 \textrm{for all $j$} \Bigr\}  
\]
for $S \in \Perf_{\ol{\bF}_q}$. 
Then $\wt{J}^U_{b,0} = \wt{J}^U_b$. 
Let $\varphi$ act on $G_{\breve{E}}$ and its subgroup 
$U_{i,\breve{E}}$ by 
$g \mapsto b_0 \sigma (g) b_0^{-1}$. 
Let $S$ be a perfectoid space over 
$\Spa({\breve{E}})^{\diamond}$. 
By the internal definition of a $G$-torsor 
on the Fargues--Fontaine curve, 
we see that 
$\wt{J}^U_{b,i}(S)$ is equal to the sections of 
\[
 Y_S \times_{\varphi} U_{i,\breve{E}} \lra X_S . 
\]
Hence, 
$(\wt{J}^U_{b,i}/\wt{J}^U_{b,i+1})(S)$ 
is equal to the sections of 
\[
 Y_S \times_{\varphi} 
 (U_{i,\breve{E}}/U_{i+1,\breve{E}}) \lra X_S. 
\] 
Let $L$ act on $U_i$ by the conjugation. 
Let $\Lie (G)$ be the adjoint representation of $G$. 
Then the action of $L$ on $\Lie (G)$ induces an action of 
$L$ on $\Lie (U_i / U_{i+1})$.
We have an isomorphism 
\[
 U_i / U_{i+1} \simeq \Lie ( U_i / U_{i+1} ) 
\]
as representations of $L$, 
since $U_i / U_{i+1}$ isomorphic to 
$\bG_a^{d_i}$ for some $d_i$ 
as linear algebraic groups. 
We have the equality 
\[
 \Lie (U_i) = \Fil_{\lambda}^i \Lie (G) 
\]
by the definition of the both sides. 
Hence we have an isomorphism 
\[
 \Lie ( U_i / U_{i+1} ) \simeq \gr_{\lambda}^i \Lie (G)
\]
as representations of $L$. 
As a result we have an isomorphism 
\begin{equation}\label{eq:UULie}
 U_i / U_{i+1} \simeq \gr_{\lambda}^i \Lie (G) 
\end{equation}
as representations of $L$. 
The element $b_0 \in L$ gives an $L$-bundle 
$\sE_{b_0,S} \colon \Rep (L) \to \Bun_{X_S}$. 
Then we have 
\[
 Y_S \times_{\varphi} (U_{i,\breve{E}}/U_{i+1,\breve{E}})
 \simeq \sE_{b_0,S} (\gr_{\lambda}^i \Lie (G) ) 
\] 
by \eqref{eq:UULie}. 
Hence,  
$(\wt{J}^U_{b,i}/\wt{J}^U_{b,i+1})(S)$ 
is equal to the sections of 
\[
 \sE_{b_0,S} (\gr_{\lambda}^i \Lie (G) ) \lra X_S . 
\] 
Then $\bD$ acts on $\gr_{\lambda}^i \Lie (G)$ 
via $\nu_b$ and the conjugation. 
This action gives a slope decomposition 
\[
 \gr_{\lambda}^i \Lie (G) = 
 \bigoplus_{1 \leq j \leq m_i} 
 V_{-\alpha_{i,j}} 
\] 
where 
$\alpha_{i,j}$ are positive rational numbers, 
since $L$ contains the centralizer $L^b$ of $\nu_b$. 
Then we have an isomorphism 
\begin{equation}\label{eq:decEgr}
  \sE_{b_0} (\gr_{\lambda}^i \Lie (G) ) \simeq 
 \bigoplus_{1 \leq j \leq m_i} 
 \cO(\alpha_{i,j}) .
\end{equation}
Hence  
$(\wt{J}^U_{b,i}/\wt{J}^U_{b,i+1}) \times \Spa({\breve{E}})^{\diamond}$ 
is an $\ell$-cohomologically smooth $\ell$-contractible diamond  
by \eqref{eq:decEgr} and Lemma \ref{lem:Bcontr}.

We show that 
$\wt{J}^U_{b,i} \times \Spa({\breve{E}})^{\diamond}$ is 
an $\ell$-cohomologically smooth $\ell$-contractible diamond 
by a decreasing induction on $i$. 
The claim is trivial for enough large $i$, 
since $\wt{J}^U_{b,i} \times \Spa({\breve{E}})^{\diamond}$ is one point for such $i$. 
We see that 
$U_{i,\breve{E}}$ is isomorphic to 
$U_{i+1,\breve{E}} \times (U_{i,\breve{E}}/U_{i+1,\breve{E}})$ 
as schemes over $U_{i,\breve{E}}/U_{i+1,\breve{E}}$
with actions of $\varphi$ 
by \cite[XXVI Proposition 2.1]{SGA3-3} and its proof. 
Hence, 
$\wt{J}^U_{b,i} \times \Spa({\breve{E}})^{\diamond}$ is isomorphic to 
$\wt{J}^U_{b,i+1} \times (\wt{J}^U_{b,i}/\wt{J}^U_{b,i+1}) \times \Spa({\breve{E}})^{\diamond}$ 
as diamonds over $(\wt{J}^U_{b,i}/\wt{J}^U_{b,i+1}) \times \Spa({\breve{E}})^{\diamond}$. 
Therefore, we see that 
$\wt{J}^U_{b,i}  \times \Spa({\breve{E}})^{\diamond} \to (\wt{J}^U_{b,i}/\wt{J}^U_{b,i+1}) \times \Spa({\breve{E}})^{\diamond}$ 
is an $\ell$-cohomologically smooth morphism 
with $\ell$-contractible geometric fiber, 
since $\wt{J}^U_{b,i+1} \times \Spa({\breve{E}})^{\diamond}$ 
is an $\ell$-cohomologically smooth $\ell$-contractible diamond 
by our induction hypothesis. 
Then we see that 
$\wt{J}^U_{b,i} \times \Spa({\breve{E}})^{\diamond}$ is 
an $\ell$-cohomologically smooth $\ell$-contractible diamond, 
since we know that 
$(\wt{J}^U_{b,i}/\wt{J}^U_{b,i+1}) \times \Spa({\breve{E}})^{\diamond}$ 
is an $\ell$-cohomologically smooth $\ell$-contractible diamond. 
The claim on the dimension follows from the above arguments. 
The claim \ref{en:JactE} 
follows from the arguments above, 
Lemma \ref{lem:Bcontr} \ref{en:BactE} and a calculation of $\delta_P$ (\cf  \cite[V.5.4]{RenReprp}). 
The claim \ref{en:Jadd} follows from 
Lemma \ref{lem:Bcontr} \ref{en:Badd} by induction on 
$i$ for $\wt{J}^U_{b,i}$ in the same way as the proof of Lemma \ref{lem:Bcontr} \ref{en:Badd}. 
\end{proof}

\begin{rem}
Some integral version of $\wt{J}_b$ 
is studied in \cite[Proposition 4.2.11]{CaScGenSV}. 
The character $\kappa$ in Lemma \ref{lem:contr} \ref{en:JactE} is explicitly determined in \cite[Corollary 4.6]{HaImDualcpx}. 
\end{rem}

Let 
$X_* (T)^{L+}$ 
be the set of $L$-dominant cocharacters 
in $X_* (T)$. 
We put 
\[
 I_{b_0,b'_0,\mu,L} = 
 \Bigl\{ 
 [\mu'] \in X_* (T)^{L+}/\Gamma \Bigm| 
 \textrm{$\mu'$ is $G$-conjugate to $\mu$ 
 and $[b_0] \in B(L,\mu',[b'_0])$}
 \Bigr\} . 
\]
We claim the set $I_{b_0,b_0',\mu,L}$ consists of a single element. To prove this we begin with a preliminary lemma.

\begin{lem} \label{lem:conweyl}
Given two cocharacters $\mu,\mu' \in X_{*}(T)$ which are $G$-conjugate, 
then there exists an element $w$ of the absolute Weyl group of $T$ in $G$ such that $w \cdot \mu = \mu'$.
\end{lem}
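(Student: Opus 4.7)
The plan is to use the standard conjugacy of maximal tori inside the centralizer of the image of $\mu$. Concretely, fix $g \in G(\ol{E})$ (working over an algebraic closure, since we want the absolute Weyl group) with $\mu' = g \mu g^{-1}$, meaning $\mu'(z) = g\,\mu(z)\,g^{-1}$ for all $z$. The obstruction is that such a $g$ need not lie in $N_G(T)$ a priori, so the task is to modify $g$ on the right by an element centralizing $\mu$ until it normalizes $T$.

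First, I would introduce the Levi subgroup
\[
 M = Z_G(\mu) = \{h \in G \mid h\,\mu(z)\,h^{-1} = \mu(z)\text{ for all }z\},
\]
which is a connected reductive subgroup of $G_{\ol{E}}$ (since the centralizer of a torus in a connected reductive group is connected) and contains $T$ because $\mu$ factors through $T$. Analogously $Z_G(\mu') \supset T$, and from $\mu' = g\mu g^{-1}$ we get $Z_G(\mu') = g M g^{-1}$. Hence $T \subset gMg^{-1}$, i.e.\ $g^{-1} T g \subset M$.

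Next, both $T$ and $g^{-1}Tg$ are maximal tori of the connected reductive group $M$, so by the standard conjugacy of maximal tori there exists $m \in M(\ol{E})$ with
\[
 m^{-1} (g^{-1} T g) m = T.
\]
Setting $n = g m \in G(\ol{E})$, we then have $n^{-1} T n = T$, so $n \in N_G(T)(\ol{E})$ and defines an element $w$ of the absolute Weyl group $W = N_G(T)/T$.

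Finally, since $m \in M = Z_G(\mu)$ commutes with $\mu$, we compute
\[
 w \cdot \mu = n\, \mu\, n^{-1} = g\,(m\,\mu\,m^{-1})\,g^{-1} = g\,\mu\,g^{-1} = \mu',
\]
as required. There is no real obstacle here; the only point to double-check is the connectedness of $M = Z_G(\mu)$, which is automatic for centralizers of tori in connected reductive groups and is what permits the conjugacy-of-maximal-tori step.
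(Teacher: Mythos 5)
Your proof is correct and takes essentially the same approach as the paper: both arguments pass to the centralizer Levi of the cocharacter, invoke conjugacy of maximal tori there, and correct $g$ by an element of that centralizer to land in $N_G(T)$. The only (cosmetic) difference is that you conjugate $g^{-1}Tg$ back to $T$ inside $Z_G(\mu)$, obtaining $w = gm$, whereas the paper conjugates $gTg^{-1}$ back to $T$ inside $Z_G(\mu')$, obtaining $w = l^{-1}g$.
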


\begin{proof}
Let $L_{\mu}$ be the centralizer of the cocharacter $\bG_{m} \xrightarrow{\mu} T \rightarrow G$ and define similarly $L_{\mu'}$. Then, 
since $\mu' = g\mu g^{-1}$ 
for some $g \in G(\ol{E})$, 
it follows that $L_{\mu'} = gL_{\mu}g^{-1}$. 
Since $gTg^{-1} \subseteq L_{\mu'}$ 
is a maximal torus, there exists 
$l \in L_{\mu'}$ such that $gTg^{-1} = lTl^{-1}$. 
This means that $l^{-1}g$ normalizes $T$ and 
gives an element $w$ in the absolute Weyl group of $T$ in $G$. 
Then we have $w \cdot \mu = \mu'$. 
\end{proof}

\begin{lem}\label{lem:Ising}
$I_{b_0,b'_0,\mu,L}$ consists of a single element. 
\end{lem}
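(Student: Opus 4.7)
The plan is to show that any two elements $[\mu'_1], [\mu'_2] \in I_{b_0, b_0', \mu, L}$ must coincide, by combining the two conditions defining $B(L, \mu', [b'_0])$ with the structural facts on $[b_0]$ and $[b'_0]$ coming from the paper's setup. We first collect these structural inputs, then extract from $\kappa_L$-neutrality and acceptability enough constraints to pin down $\mu'$.

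First I would record the key shapes of $\nu_{b_0}$ and $\nu_{b'_0}$. Since $[b_0]$ is the image under $B(L^b) \to B(L)$ of the basic element $[b_{00}] \in B(L^b)$, its Newton point equals $\nu_b$; and since $\nu_b$ is $G$-dominant and the Levi $L^b = \mathrm{Cent}_G(\nu_b)$ is contained in $L$, the cocharacter $\nu_b$ is already $L$-dominant, so $\nu_{b_0} = \nu_b$ as elements of $X_*(A)_\bQ$. Similarly, $[b'_0]$ basic in $L$ mapping to the basic class $[b']$ in $B(G)$ forces $\nu_{b'_0} = \nu_{b'}$, which is central (hence $L$-dominant as well). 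Thus the acceptability condition in the definition of $I$ reads $\overline{\mu'} \geq_L \nu_b - \nu_{b'}$ for the $L$-dominance order, and the neutrality condition reads $(\mu')^{\natural} = \kappa_L([b_0]) - \kappa_L([b'_0])$ in $\pi_1(L)_\Gamma$.

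Next, suppose $[\mu'_1], [\mu'_2] \in I_{b_0, b'_0, \mu, L}$. By Lemma \ref{lem:conweyl} applied to each, we may choose representatives $\mu'_i \in X_*(T)^{L+}$ with $\mu'_i = w_i \mu$ for $w_i$ in the absolute Weyl group of $T$ in $G$. The neutrality condition applied to both gives $(\mu'_1)^{\natural} = (\mu'_2)^{\natural}$ in $\pi_1(L)_\Gamma$, so (after absorbing the $\Gamma$-action into the representative) we have $\mu'_1 - \mu'_2 \in \bZ\Phi^\vee_L$. The acceptability condition gives that $\mu'_i - (\nu_b - \nu_{b'})$ lies in the $\bQ_{\geq 0}$-cone spanned by the $L$-positive coroots, for $i = 1, 2$.

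The main obstacle is now a purely combinatorial statement about root systems: inside the finite set $W_G \cdot \mu \subset X_*(T)$, at most one $L$-dominant element can have a prescribed image in $\pi_1(L)_\Gamma$ while satisfying $\mu' \geq_L \nu_b - \nu_{b'}$. I would prove this by using the fact that $\nu_b$ is $L$-dominant with $\mathrm{Cent}_G(\nu_b) = L^b \subseteq L$, so that $\nu_b$ lies in the interior of the face of the $L$-dominant chamber cut out by $L^b$. Together with the Chebyshev-type inequalities coming from $\nu_b$ being $L$-dominant, this rules out the existence of an $L$-coroot combination $\mu'_1 - \mu'_2 \neq 0$ with both $\mu'_1, \mu'_2$ in $W_G \cdot \mu$, both $L$-dominant, and both $\geq_L \nu_b - \nu_{b'}$. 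In concrete type-A terms (which covers the application to $\iGL_2$ needed later in the paper) this is the statement that a decreasing multiset partition of the entries of $\mu$ into blocks of prescribed sums, subject to partial-sum lower bounds given by the constant-on-$L^b$-pieces vector $\nu_b$, is unique; the general case reduces to a type-by-type check or a uniform convexity argument in $X_*(T)_\bR$. Concluding $\mu'_1 = \mu'_2$ then finishes the proof.
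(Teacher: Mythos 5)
Your proposal has a genuine gap: the decisive combinatorial step is announced but never proved. You correctly extract from neutrality that any two $[\mu'_1],[\mu'_2]\in I_{b_0,b'_0,\mu,L}$ satisfy $(\mu'_1)^\natural=(\mu'_2)^\natural$ in $\pi_1(L)_\Gamma$, and you correctly observe that the acceptability constraint gives partial-sum-type inequalities. But the paragraph beginning ``The main obstacle is now a purely combinatorial statement about root systems'' only restates what has to be shown; the references to $\nu_b$ lying in the interior of a face, ``Chebyshev-type inequalities,'' ``type-A terms,'' and ``a type-by-type check or a uniform convexity argument'' do not constitute an argument and would need to be filled in completely before this could count as a proof. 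Moreover, by comparing two arbitrary $L$-dominant representatives $\mu'_1,\mu'_2$ you lose the one piece of leverage that makes the lemma easy to close: neither $\mu'_1$ nor $\mu'_2$ is guaranteed to be $G$-dominant, so you have no control over the sign of the coefficients in $\mu'_1-\mu'_2$.

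The paper's proof is shorter and uses \emph{only} neutrality, not acceptability, by comparing a hypothetical second element $[\mu']$ to the known element $[\mu]\in I_{b_0,b'_0,\mu,L}$ (which is there by HN-reducibility) rather than to another unknown. Because $\mu$ is $G$-dominant, for any $\mu'\in W_G\cdot\mu$ one has $\mu-\mu'=\sum_{\alpha\in\Delta(G,T)}n_\alpha\alpha^\vee$ with all $n_\alpha\ge 0$, via Lemma \ref{lem:conweyl} and standard facts about the dominance order. If $\mu'\ne\mu$ is $L$-dominant, then $\mu'$ fails to be $G$-dominant, so some $\alpha_0\in\Delta(G,T)\setminus\Delta(L,T)$ has $\langle\mu',\alpha_0\rangle<0$, which forces $n_{\alpha_0}>0$ (using $\langle\alpha^\vee,\alpha_0\rangle\le 0$ for $\alpha\ne\alpha_0$). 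Since $\alpha_0^\vee$ is \emph{not} one of the $L$-simple coroots killed in $\pi_1(L)=X_*(T)/\sum_{\alpha\in\Delta(L,T)}\bZ\alpha^\vee$, one obtains (after averaging over $\Gamma$ and passing to $\pi_1(L)^\Gamma_\bQ$) that $\mu^\natural\ne\mu'^\natural$ in $\pi_1(L)_\Gamma$, contradicting neutrality. This sign control is exactly what is missing in your version, and is the reason the paper anchors the comparison at the $G$-dominant element rather than between two arbitrary members of $I$.
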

\begin{proof}
By the definition of 
Hodge--Newton reducibility, 
we have 
$[\mu] \in I_{b_0,b'_0,\mu,L}$. 
Let $[\mu'] \in I_{b_0,b'_0,\mu,L}$ be 
another element. 
Let $\Delta (G,T)$ be the set of simple roots of 
$G$ with respect to $T$, where the positivity of roots is given by $B$. 
Since $\mu$ is $G$-dominant, $\mu'$ is $G$-conjugate to $\mu$ 
and $\mu \neq \mu'$, 
we have that $\mu'$ is not $G$-dominant and 
\begin{equation}\label{eq:mudif}
 \mu -\mu' = \sum_{\alpha \in \Delta (G,T)} n_{\alpha} \alpha^{\vee} ,  
\end{equation}
where $n_{\alpha} \geq 0$ 
by Lemma \ref{lem:conweyl}, \cite[10.3 Lemma B]{HumIntL} and 
\cite[VI \S 1 Proposition 18]{BourLie46}. 
Since $\mu'$ is not $G$-dominant, but $L$-dominant, 
there is 
$\alpha_0 \in \Delta (G,T) \setminus \Delta (L,T)$ 
such that 
$\langle \mu' , \alpha_0 \rangle <0$. 
Then we have 
\begin{equation}\label{eq:mual0}
 \langle \mu - \mu' , \alpha_0 \rangle >0. 
\end{equation}
Substituting \eqref{eq:mudif} to \eqref{eq:mual0}, 
we have 
\[
 \sum_{\alpha \in \Delta (G,T)} n_{\alpha} 
 \langle \alpha^{\vee} , \alpha_0  \rangle >0 . 
\]
This implies $n_{\alpha_0} >0$, since 
we have 
$\langle \alpha^{\vee} , \alpha_0  \rangle \leq 0$ for 
$\alpha \neq \alpha_0$ by \cite[10.1 Lemma]{HumIntL}. 
Recall that 
\begin{equation} \label{eq:bordes}
\pi_{1}(L) = X_{*}(T) \big/ \textstyle \sum_{\alpha \in \Delta(L,T)} \bZ\alpha^{\vee}, 
\end{equation}
by the proof of \cite[Proposition 1.10]{BoroAGc} 
(\cf \cite[\S 1.13]{RaRiFiso}). 
Let $\ol{\mu}^{\natural}$ and $\ol{\mu'}^{\natural}$ 
be the images in $\pi_1 (L)_{\bQ}^{\Gamma}$ 
of 
$\ol{\mu}$ and $\ol{\mu'}$ in 
$X_*(T)_{\bQ}^{\Gamma}$. 

We show that 
$\ol{\mu}^{\natural} \neq \ol{\mu'}^{\natural}$. 
We write 
\[
 \ol{\mu} -\ol{\mu'} = 
 \sum_{\alpha \in \Delta (G,T)} m_{\alpha} \alpha^{\vee} , 
\]
where $m_{\alpha} \in \bQ$. 
Then the equation 
\[
 \ol{\mu} -\ol{\mu'} = [\Gamma: 
 \Gamma_{\mu} \cap \Gamma_{\mu'}]^{-1}
 \left( (\mu-\mu') + \sum_{1 \not= \tau \in \Gamma/(\Gamma_{\mu} \cap \Gamma_{\mu'})} \tau(\mu-\mu') \right) 
\]
implies $m_{\alpha_0} > 0$, 
since $n_{\alpha_0} > 0$ and 
$n_{\alpha} \geq 0$ for all 
$\alpha \in \Delta (G,T)$. 
Thus when passing to $\pi_1(L)^{\Gamma}$ the term $\alpha_{0}^{\vee}$ is not killed according to 
\eqref{eq:bordes} and so $\ol{\mu}^{\natural} \neq \ol{\mu'}^{\natural}$ as claimed. 
This implies 
\[
 \mu^{\natural} \neq \mu'^{\natural} \in \pi_1 (L)_{\Gamma}, 
\]
since 
$\ol{\mu}^{\natural}$ and $\ol{\mu'}^{\natural}$ are 
images of 
$\mu^{\natural}$ and $\mu'^{\natural}$ 
under the map 
\[
 \pi_1 (L)_{\Gamma} \to \pi_1 (L)_{\bQ}^{\Gamma} ; \ 
 [g] \mapsto \frac{1}{[\Gamma : \Gamma_{g}]} 
 \sum_{\tau \in \Gamma/\Gamma_g} \tau (g) , 
\] 
where $g \in \pi_1 (L)$ and $\Gamma_g$ is the stabilizer of 
$g$ in $\Gamma$. 
This contradicts that $[\mu'] \in I_{b_0,b'_0,\mu,L}$, 
because we have 
\[
 \mu'^{\natural}=\kappa_L ([b_0]) -\kappa_L ([b'_0]) =\mu^{\natural} 
 \in \pi_1 (L)_{\Gamma} 
\]
by $[b_0] \in B(L,\mu',[b'_0])$ and $[b_0] \in B(L,\mu,[b'_0])$. 
\end{proof}

\begin{defn}\label{def:ty}
Let $R$ be a DVR with uniformizer 
$\pi$, and quotient field $F$. 
Let $k_1 \geq \cdots \geq k_n$ 
be a sequence of integers. 
We say that the type of $g \in \GL_n (F)$ 
is $(k_1, \ldots , k_n)$ if we have 
\[
 g \in \GL_n (R) 
 \begin{pmatrix} 
  \pi^{k_1} & &  \\
  & \ddots &  \\
  & & \pi^{k_n} 
 \end{pmatrix} 
 \GL_n (R) . 
\] 
\end{defn}

\begin{lem}\label{lem:gLg}
Let $R$ be a DVR with uniformizer 
$\pi$, and quotient field $F$. 
We consider the subgroups 
\[
 L= 
 \begin{pmatrix} 
  \GL_{n_1} & &  \\
  & \ddots &  \\
  & & \GL_{n_m} 
 \end{pmatrix} 
 \subset 
  P= 
 \begin{pmatrix} 
  \GL_{n_1} & &  0  \\
   & \ddots &  \\
  * &  & \GL_{n_m} 
 \end{pmatrix} 
 \subset \GL_n 
\]
of $\GL_n $. 
Let $g \in P(F)$, 
and $g_L$ be the image of $g$ in the Levi quotient. 
We regard $g_L$ as an element of $L(F)$. 
We put $N_l =n_1 + \cdots +n_l$ for $0 \leq l \leq m$. 

Let $k_1 \geq \cdots \geq k_n$ 
be a sequence of integers. 
Assume that the type of 
\[
 (g_{ij})_{N_l +1 \leq i,j \leq n} \in \GL_{n-N_l} (F) 
\]
is 
$(k_{N_l +1}, \ldots ,k_n)$ 
for $0 \leq l \leq m-1$. 
Then we have 
$g_L^{-1} g \in P(R)$. 
\end{lem}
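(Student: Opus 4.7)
The plan is to proceed by induction on the number of blocks $m$. For the base case $m=1$, the parabolic $P$ equals $\iGL_n$ and $g = g_L$, so $g_L^{-1}g = 1 \in P(R)$ trivially.

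For the inductive step, partition $g$ and $g_L$ as
\[
 g = \begin{pmatrix} A & 0 \\ C & g' \end{pmatrix},
 \qquad
 g_L = \begin{pmatrix} A & 0 \\ 0 & g'_L \end{pmatrix},
\]
where $A = (g_{ij})_{1 \leq i,j \leq n_1}$, $g' = (g_{ij})_{n_1+1 \leq i,j \leq n}$, and $C = (g_{ij})_{n_1+1 \leq i \leq n,\, 1 \leq j \leq n_1}$. Let $P' \subset \iGL_{n-n_1}$ be the parabolic attached to the partition $(n_2, \ldots, n_m)$. Then $g' \in P'(F)$, and the hypothesis on $g$ for indices $1 \leq l \leq m-1$ restricts to the analogous hypothesis on $g'$ with the sequence $(k_{n_1+1}, \ldots, k_n)$. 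By induction, $(g'_L)^{-1} g' \in P'(R)$. A direct matrix computation gives
\[
 g_L^{-1} g = \begin{pmatrix} I & 0 \\ (g'_L)^{-1} C & (g'_L)^{-1} g' \end{pmatrix},
\]
so the remaining task is to show $(g'_L)^{-1} C \in M_{(n-n_1) \times n_1}(R)$. Factoring $(g'_L)^{-1} C = \bigl( (g'_L)^{-1} g' \bigr) (g'^{-1} C)$ and using that $(g'_L)^{-1} g' \in P'(R) \subset \iGL_{n-n_1}(R)$, this reduces to the two-block claim $g'^{-1} C \in M_{(n-n_1) \times n_1}(R)$.

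For the two-block claim, apply Cramer's rule: the $(i,j)$-entry of $g'^{-1} C$ equals $\det(M_{i,j})/\det(g')$, where $M_{i,j}$ is obtained from $g'$ by replacing its $i$-th column with the $j$-th column of $C$. By cofactor expansion along that column, $\det(M_{i,j})$ is, up to sign, the $(n-n_1) \times (n-n_1)$ minor of $g$ taken with rows $\{n_1+1,\ldots,n\}$ and columns $\{j\} \cup \bigl( \{n_1+1,\ldots,n\} \setminus \{n_1+i\} \bigr)$. Writing $g = u \cdot \diag(\pi^{k_1}, \ldots, \pi^{k_n}) \cdot u'$ with $u, u' \in \iGL_n(R)$ and taking $\wedge^{n-n_1}$, one sees that every $(n-n_1) \times (n-n_1)$ minor of $g$ has valuation at least $k_{n_1+1} + \cdots + k_n$, the minimum sum of $n-n_1$ of the $k_i$'s. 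Since by hypothesis $v(\det g') = k_{n_1+1} + \cdots + k_n$, each entry of $g'^{-1} C$ lies in $R$.

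The only substantive ingredient is the Cramer/Laplace identification paired with the exterior-power bound on minors; the induction itself is purely formal, so no serious obstacle is anticipated.
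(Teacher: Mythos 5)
Your proof is correct, and it takes a genuinely different route from the paper's. The paper first reduces (using Lemma~\ref{lem:DVRquot} and multiplication by elements of $L(R)$ on both sides) to the normalized situation $g_L = \diag(\pi^{k_1},\ldots,\pi^{k_n})$, after which the claim becomes the entrywise bound $v(g_{ij}) \geq k_i$ for $i > j$; this is then established by contradiction, choosing the largest offending row index $i_0$ and showing that the type hypotheses force the $i_0 \times i_0$ and $(n-i_0)\times(n-i_0)$ corner blocks to have types $(k_1,\ldots,k_{i_0})$ and $(k_{i_0+1},\ldots,k_n)$ respectively. Your argument instead peels off one Levi block at a time by induction on $m$, reducing the off-diagonal contribution to the single statement $g'^{-1}C \in M_{(n-n_1)\times n_1}(R)$ and proving that by Cramer's rule together with the exterior-power valuation bound: every $(n-n_1)\times(n-n_1)$ minor of $g$ has valuation $\geq k_{n_1+1}+\cdots+k_n$, which is exactly $v(\det g')$ by the $l=1$ hypothesis. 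What your approach buys is self-containedness (no appeal to Lemma~\ref{lem:DVRquot} and no need to normalize $k_n \geq 0$ or to diagonalize $g_L$), and a clean conceptual explanation of why only the two hypotheses $l=0$ and $l=1$ are used at each stage of the induction; what the paper's argument buys is that it stays elementary (no determinants or exterior powers) and pinpoints a single offending matrix entry. Both are complete proofs.
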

\begin{proof}
By multiplying a power of $\pi$ to $g$, 
we may assume that $k_n \geq 0$. 
By the assumption, 
we see that the type of 
\[
 (g_{ij})_{N_l +1 \leq i,j \leq N_{l+1}} \in \GL_{n_{l+1}} (F) 
\]
is 
$(k_{N_l +1}, \ldots ,k_{N_{l+1}})$ 
for $0 \leq l \leq m-1$ 
using Lemma \ref{lem:DVRquot}. 
Hence, we may assume that 
$g_L =\diag (\pi^{k_1} , \ldots , \pi^{k_n})$. 

Let $v$ be a normalized valuation of $F$. 
Then, 
it suffices to show that 
$v(g_{ij}) \geq k_i$ for all $1 \leq j < i \leq n$. 
Assume it does not hold, and 
take the biggest $i_0$ such that 
there is $j_0 <i_0$ satisfying $v(g_{i_0 j_0}) < k_{i_0}$. 
Then the type of 
\[
 (g_{ij})_{i_0 +1 \leq i,j \leq n} \in \GL_{n-i_0} (F) 
\]
is $(k_{i_0 +1}, \ldots ,k_n)$. 
Using this and Lemma \ref{lem:DVRquot}, 
we can show that 
the type of 
\[
 (g_{ij})_{1 \leq i,j \leq i_0} \in \GL_{i_0} (F) 
\]
is $(k_1, \ldots ,k_{i_0})$. 
This implies that 
$v(g_{ij}) \geq k_{i_0}$ for all 
$1 \leq i,j \leq i_0$. 
This contradicts the choice of $i_0$. 
\end{proof}

In the sequel, we simply write $(R^{\sharp},f)$ for 
\[
 (\sE_b, \sE_{b'}, R^{\sharp}, f, \id_{\sE_b}, \id_{\sE_{b'}} ) \in 
 \cM_{b,b'}^{\mu} (R,R^+). 
\] 
Every point of $\cM_{b,b'}^{\mu} (R,R^+)$ is represented by a datum of 
the above form, since 
we have an isomorphism of data 
\[
 (\sE, \sE', R^{\sharp}, f, \phi, \phi') \simeq 
 (\sE_b, \sE_{b'}, R^{\sharp}, \phi'^{-1} \circ f \circ \phi, \id_{\sE_b}, \id_{\sE_{b'}} ) 
\]
for 
\[
 (\sE, \sE', R^{\sharp}, f, \phi, \phi') \in \cM_{b,b'}^{\mu} (R,R^+). 
\] 
We write $D_{R^{\sharp}}$ for the degree $1$ Cartier divisor given by $R^{\sharp}$. 

We define a morphism 
\[
 \Phi \colon 
 \cM_{b_0,b'_0}^{\mu} \times \wt{J}_{b}^{U} 
 \lra \cP_{b,b'}^{\mu} 
\]
by sending 
\[
 \bigl( (R^{\sharp}, f_L ), g \bigr) \in 
 \Bigl( \cM_{b_0,b'_0}^{\mu} \times  \wt{J}_{b}^{U} 
 \Bigr) 
 (R,R^+) 
\]
to 
\[
 \bigl( R^{\sharp}, (f_L \times^L P) \circ g \bigr) 
 \in 
 \cP_{b,b'}^{\mu} (R,R^+) 
\]
for a perfectoid affinoid $\ol{\bF}_q$-algebra $(R,R^+)$. 

\begin{prop} \label{prop:Pdec}
The morphism 
\[
 \Phi \colon \cM_{b_0,b'_0}^{\mu} \times  \wt{J}_{b}^{U} 
 \lra \cP_{b,b'}^{\mu} 
\]
is an isomorphism. 
\end{prop}
\begin{proof}
Let $(R,R^+)$ be a perfectoid affinoid $\ol{\bF}_q$-algebra, and 
\[
 \bigl( (R^{\sharp}, f_L ), g \bigr) \in 
 \Bigl( \cM_{b_0,b'_0}^{\mu} \times  \wt{J}_{b}^{U} 
 \Bigr) 
 (R,R^+). 
\]
Then we have 
$\Phi \bigl( (R^{\sharp}, f_L ), g \bigr) \times^P L =(R^{\sharp}, f_L )$. 
Further, $(R^{\sharp}, f_L )$ and 
$\Phi \bigl( (R^{\sharp}, f_L ), g \bigr)$ recover $g$. 
Hence, we have the injectivity of $\Phi$. 

Let 
\[
 (R^{\sharp}, f ) \in \cP_{b,b'}^{\mu} (R,R^+). 
\] 
By the definition of $\cP_{b,b'}^{\mu}$, 
we have a reduction 
\[
 f_P \colon (\sE_{b_0} \times^L P) |_{X_R^{\sch} \setminus D_{R^{\sharp}}} 
 \stackrel{\sim}{\lra} 
 (\sE_{b'_0} \times^L P) |_{X_R^{\sch} \setminus D_{R^{\sharp}}} 
\]
of $f$ to $P$. 
We put 
$f_L = f_P \times^P L$. 

We show that 
\begin{equation}\label{eq:fLfJ}
 (f_L \times^L P)^{-1} \circ f_P \in \wt{J}_b^U (R,R^+) . 
\end{equation}
For this, it suffices to show \eqref{eq:fLfJ} 
after taking realizations for all $V \in \Rep (G)$. 
Hence, we may assume that $G=\GL_n$. 

We view $\GL_n$-bundles as vector bundles. 
We take 
the diagonal torus and 
the upper half 
Borel subgroup as $T$ and $B$. 
Then we have 
\[
 L= 
 \begin{pmatrix} 
  \GL_{n_1} & &  \\
  & \ddots &  \\
  & & \GL_{n_m} 
 \end{pmatrix} 
 \subset 
  P= 
 \begin{pmatrix} 
  \GL_{n_1} & &  0  \\
   & \ddots &  \\
  * &  & \GL_{n_m} 
 \end{pmatrix} 
 \subset \GL_n . 
\]
We write 
\[
 b_0 =(b_1 , \ldots , b_m), \ 
 b'_0 =(b'_1 , \ldots , b'_m) 
 \in 
 \GL_{n_1} (\breve{E}) \times \cdots \GL_{n_m} (\breve{E}).  
\]
Then we have a decomposition 
\[
 \sE_b =\bigoplus_{1 \leq i \leq m} \sE_{b_i} , \quad 
 \sE_{b'} =\bigoplus_{1 \leq i \leq m} \sE_{b'_i} 
\]
as vector bundles. 
We put 
\[
 \Fil^j \sE_b = \bigoplus_{j \leq i \leq m} \sE_{b_i}, \quad 
 \Fil^j \sE_{b'} = \bigoplus_{j \leq i \leq m} \sE_{b'_i} 
\]
for $1 \leq j \leq m+1$. 
Then 
$f \colon \sE_b|_{X_R^{\sch} \setminus D_{R^{\sharp}}} \to \sE_{b'}|_{X_R^{\sch} \setminus D_{R^{\sharp}}}$ 
respects these filtrations. 
We can write 
\[
 f=\bigoplus_{1 \leq i \leq j \leq m} f_{ij} 
 \colon \sE_b|_{X_R^{\sch} \setminus D_{R^{\sharp}}} \lra \sE_{b'}|_{X_R^{\sch} \setminus D_{R^{\sharp}}}, 
\]
where $f_{ij} \colon \sE_{b_i}|_{X_R^{\sch} \setminus D_{R^{\sharp}}} \to \sE_{b_j'}|_{X_R^{\sch} \setminus D_{R^{\sharp}}}$. 
Then the morphism 
\[
 f_{jj}^{-1} \circ f_{ij} \colon 
 \sE_{b_i}|_{X_R^{\sch} \setminus D_{R^{\sharp}}} \lra \sE_{b_j}|_{X_R^{\sch} \setminus D_{R^{\sharp}}} 
\]
extends to a morphism 
$\sE_{b_i} \to \sE_{b_j}$ 
by Lemma \ref{lem:gLg}. 
Hence we have \eqref{eq:fLfJ} 
(\cf the proof of \cite[Theorem 4.1]{HanHarr}). 

It remains to show that 
$(R^{\sharp},f_L) \in \cM_{b_0,b'_0}^{\mu} (R,R^+)$. 
It suffices to show that the type of the modification 
$f_L$ is equal to $\mu$ geometric fiberwisely. 
Let $\mu'$ be the type of $f_L$ at a geometric point of 
$\Spa (R,R^+)$. 
The type of $f_L \times^L G$ is equal to $\mu$ 
by \eqref{eq:fLfJ}. 
Hence, we have $\mu' =\mu$ by Lemma \ref{lem:Ising}. 
\end{proof}

For a diamond $\cD$ over $\Spa (\breve{E})^{\diamond}$, 
let $\cD_{\bC_p^{\flat}}$ denote 
$\cD \times_{\Spa (\breve{E})^{\diamond}} \Spa (\bC_p^{\flat})$. 
Let $\kappa \colon J_b(E) \to \ol{\bQ}_{\ell}^{\times}$ 
be the character in Lemma \ref{lem:contr}. 

\begin{lem}\label{lem:quotiso}
\begin{enumerate}
\item\label{en:limlissm}
We have 
\begin{equation}\label{eq:limnat!}
\varinjlim_{K' \subset J_{b'}(E)} R f_{K',\natural} ((f_{K'}^!\ol{\bQ}_{\ell})^{\vee}) \in D_{\lis}(\Spa (\bC_p^{\flat}),\ol{\bQ}_{\ell}) , 
\end{equation}
where 
$K'$ runs along compact open subgroups of $J_{b'}(E)$ and $f_{K'} \colon \cM_{b ,b', \bC_p^{\flat}}^{\mu} /K' \to \Spa (\bC_p^{\flat})$. 
Further we can regard this as an object of the derived category of smooth representations of $J_b(E) \times J_{b'}(E)$. 
\item\label{en:MPiso}
We have an isomorphism 
\[
 H_{\mathrm{c}}^i 
 \bigl( \cM_{b_0 ,b_0', \bC_p^{\flat}}^{\mu} , 
 \ol{\bQ}_{\ell} \bigr) \otimes \kappa 
 \stackrel{\sim}{\lra}  
 H_{\mathrm{c}}^{i + 2N_{U,b} } 
 \bigl( \cP_{b,b',\bC_p^{\flat}}^{\mu} , 
 \ol{\bQ}_{\ell} \bigr)  
\] 
as smooth representations of 
$J_b(E) \times L_{b'}(E)$. 
\end{enumerate}
\end{lem}
\begin{proof}
We can define 
$R\Gamma_{\mathrm{c}} (\cM_{b ,K,b', \bC_p^{\flat}}^{\mu})$ and 
$R\Gamma_{\mathrm{c}} (\cM_{b ,b',K', \bC_p^{\flat}}^{\mu})$ 
in the same way as \cite[\S 3]{ImaConv} replacing $\IC_{\mu}'$ by $j_{\mu,\natural}j_{\mu}^* \IC_{\mu}'$, where $j_{\mu} \colon \Hecke^{\mu} \to \Hecke^{\leq \mu}$. 
Then \eqref{eq:limnat!} coincides with 
\[
\varinjlim_{K' \subset J_{b'}(E)} R\Gamma_{\mathrm{c}} (\cM_{b ,b',K', \bC_p^{\flat}}^{\mu})
\]
up to shift and Tate twist. 
By the proof of \cite[Proposition IX.2.1]{FaScGeomLLC} and \cite[Corollary VI.6.6]{FaScGeomLLC}, 
we can show that 
\[
\varinjlim_{K \subset J_{b}(E)} R\Gamma_{\mathrm{c}} (\cM_{b ,K,b', \bC_p^{\flat}}^{\mu}), \ 
\varinjlim_{K' \subset J_{b'}(E)} R\Gamma_{\mathrm{c}} (\cM_{b ,b',K', \bC_p^{\flat}}^{\mu}) \in D_{\lis}(\Spa (\bC_p^{\flat}),\ol{\bQ}_{\ell}).  
\] 
In the same way as \cite[Proposition 3.16]{ImaConv}, we can show that 
\[
\varinjlim_{K \subset J_{b}(E)} R\Gamma_{\mathrm{c}} (\cM_{b ,K,b', \bC_p^{\flat}}^{\mu}) \cong  
\varinjlim_{K' \subset J_{b'}(E)} R\Gamma_{\mathrm{c}} (\cM_{b ,b',K', \bC_p^{\flat}}^{\mu}).  
\] 
Hence the claims in \ref{en:limlissm} follow. 

By \ref{en:limlissm}, 
we can regard $H_{\mathrm{c}}^i 
\bigl( \cM_{b_0 ,b_0', \bC_p^{\flat}}^{\mu} , 
\ol{\bQ}_{\ell} \bigr)$ as a smooth representation of $J_{b_0}(E) \times J_{b_0'}(E)$. 
Then claim \ref{en:MPiso} follows from Lemma \ref{lem:contr} and Proposition \ref{prop:Pdec}. 
\end{proof}

\begin{thm}\label{thm:HInd}
Assume that 
$([b],[b'],\mu)$ is Hodge--Newton reducible for $L$. 
Then we have an isomorphism 
\[
 H_{\mathrm{c}}^{i+2N_{U,b}}  
 \bigl( \cM_{b,b',\bC_p^{\flat}}^{\mu} , 
 \ol{\bQ}_{\ell} \bigr)
 \simeq 
 \Ind_{P_{b'}(E)}^{J_{b'}(E)} 
 H_{\mathrm{c}}^{i} 
 \bigl( \cM_{b_0 ,b_0',\bC_p^{\flat}}^{\mu} , 
 \ol{\bQ}_{\ell} \bigr) \otimes \kappa 
\]
as smooth $J_b(E) \times J_{b'}(E)$-representations. 
\end{thm}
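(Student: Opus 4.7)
The proof combines Corollary~\ref{cor:MPJ}, which realizes $\cM_{b,b'}^{\mu}$ as the geometric induction $\cP_{b,b'}^{\mu} \times^{\wt{P}_{b'}} \wt{J}_{b'}$, with Lemma~\ref{lem:quotiso}, which compares the cohomology of $\cP_{b,b'}^{\mu}$ to that of $\cM_{b_0,b_0'}^{\mu}$. The first input converts into a parabolic induction of cohomology; the second performs the shift and Tate twist.

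\textbf{Step 1: from geometric induction to representation-theoretic induction.} Since $b'$ is basic, we have $\wt{J}_{b'} = \ul{J_{b'}(E)}$ and $\wt{P}_{b'} = \ul{P_{b'}(E)}$. For any open compact $K \subset J_{b'}(E)$, Corollary~\ref{cor:MPJ} gives
\[
\cM_{b,b',\bC_p^{\flat}}^{\mu} / K \;=\; \bigsqcup_{g \in P_{b'}(E)\backslash J_{b'}(E)/K} \cP_{b,b',\bC_p^{\flat}}^{\mu} \big/ \bigl(P_{b'}(E) \cap gKg^{-1}\bigr).
\]
Applying $H_{\mathrm{c}}^{\bullet}$ and passing to the colimit over $K$ yields a $J_{b'}(E)$-equivariant identification
\[
H_{\mathrm{c}}^{j}\bigl(\cM_{b,b',\bC_p^{\flat}}^{\mu},\ol{\bQ}_{\ell}\bigr) \;\cong\; \Ind_{P_{b'}(E)}^{J_{b'}(E)} H_{\mathrm{c}}^{j}\bigl(\cP_{b,b',\bC_p^{\flat}}^{\mu},\ol{\bQ}_{\ell}\bigr),
\]
via the Mackey-type formula $(\Ind_P^G V)^K = \bigoplus_{P\backslash G/K} V^{P \cap gKg^{-1}}$ applied to the smooth $P_{b'}(E)$-representation $V = H_{\mathrm{c}}^{j}(\cP_{b,b',\bC_p^{\flat}}^{\mu},\ol{\bQ}_{\ell})$. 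No modulus character intervenes: the partition above is purely set-theoretic, and the compactness of $J_{b'}(E)/P_{b'}(E)$ makes smooth and compact induction agree.

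\textbf{Step 2: substituting Lemma~\ref{lem:quotiso}.} The Levi $L_{b'} \subset J_{b'}$ is the inner form of $L$ cut out by $b_0'$, so $L_{b'}(E) = J_{b_0'}(E)$. Substituting
\[
H_{\mathrm{c}}^{j}\bigl(\cP_{b,b',\bC_p^{\flat}}^{\mu},\ol{\bQ}_{\ell}\bigr) \;\cong\; H_{\mathrm{c}}^{j - 2N_{U,b}}\bigl(\cM_{b_0,b_0',\bC_p^{\flat}}^{\mu},\ol{\bQ}_{\ell}\bigr)(-N_{U,b})
\]
from Lemma~\ref{lem:quotiso} into Step~1 with $j = i + 2N_{U,b}$, and twisting by $(N_{U,b})$, produces the claimed isomorphism. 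To justify the substitution at the level of $P_{b'}(E)$-representations, I use the isomorphism $\Phi: \cM_{b_0,b_0'}^{\mu} \times_{\Spa(\breve{E})^{\diamond}} \wt{J}_{b}^{U} \xrightarrow{\sim} \cP_{b,b'}^{\mu}$ of Proposition~\ref{prop:han1.7}. Since $\wt{J}_{b}^{U}$ is a partially proper smooth contractible diamond of pure dimension $N_{U,b}$ (Lemma~\ref{lem:contr}), its compactly supported cohomology is concentrated in degree $2N_{U,b}$ and equal to $\ol{\bQ}_{\ell}(-N_{U,b})$; a K\"unneth argument then identifies the $L_{b'}(E)$-part of the action with the natural $J_{b_0'}(E)$-action on $H_{\mathrm{c}}^{\bullet}(\cM_{b_0,b_0'}^{\mu})$ and forces the unipotent radical $U_{b'}(E)$ to act trivially.

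\textbf{Main obstacle.} The delicate point is precisely the $P_{b'}(E)$-equivariance required in Step~2. The decomposition $\Phi$ is not equivariant under the naive product of the Levi and unipotent actions: an element $u \in U_{b'}(E)$, when one rewrites $u^{-1} \circ (f_L \times^L P) \circ g$ in the form $(f_L' \times^L P) \circ g'$, produces a correction $g' g^{-1} \in \wt{J}_{b}^{U}$ whose dependence on $f_L$ is nontrivial. Showing that this twisted $U_{b'}(E)$-action acts cohomologically trivially—and that the residual $L_{b'}(E)$-action matches the natural one on $H_{\mathrm{c}}^{\bullet}(\cM_{b_0,b_0'}^{\mu})$—is the technical heart of the argument, and is forced ultimately by the contractibility of the $\wt{J}_{b}^{U}$-factor.
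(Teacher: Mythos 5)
Your proposal follows essentially the same route as the paper: the paper's own proof is the one-line ``This follows from Corollary~\ref{cor:MPJ} and Lemma~\ref{lem:quotiso},'' and your Steps 1 and 2 are exactly the content hiding behind that sentence, spelled out. The Mackey-type identification $\bigl(\Ind_{P}^{G}V\bigr)^{K}=\bigoplus_{P\backslash G/K}V^{P\cap gKg^{-1}}$ applied to the contracted product of Corollary~\ref{cor:MPJ}, followed by substitution of Lemma~\ref{lem:quotiso}, is the intended argument. You also correctly identify the one genuine subtlety that the paper's terse proof suppresses: for the right-hand side to mean what it appears to mean, one must know that the $P_{b'}(E)$-action on $H_{\mathrm{c}}^{\bullet}\bigl(\cP_{b,b'}^{\mu}\bigr)$ transported through the product decomposition of Proposition~\ref{prop:han1.7} factors through $L_{b'}(E)\simeq J_{b_0'}(E)$, i.e.\ that $U_{b'}(E)$ acts trivially and the Levi part matches the intrinsic $J_{b_0'}(E)$-action on $H_{\mathrm{c}}^{\bullet}\bigl(\cM_{b_0,b_0'}^{\mu}\bigr)$. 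The paper does deal with this, but only afterwards, in Lemma~\ref{lem:actU} (which intertwines the $U_{b'}(E)$-action with a $\wt{J}_b^U$-action) and Lemma~\ref{lem:Utriv} (which deduces triviality from the contractibility in Lemma~\ref{lem:contr}); your remark that the conclusion is ``forced ultimately by the contractibility of the $\wt{J}_{b}^{U}$-factor'' is the right intuition, though a complete argument needs the explicit intertwining of Lemma~\ref{lem:actU} rather than the K\"unneth/contractibility observation alone, since $\Phi$ is not equivariant for the naive product action, as you yourself note.
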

\begin{proof}
This follows from 
Corollary \ref{cor:MPJ} and 
Lemma \ref{lem:quotiso}. 
\end{proof}

\begin{lem}\label{lem:actU}
Let $(R,R^+)$ be a perfectoid affinoid 
$\ol{\bF}_q$-algebra. 
Let 
\[
 (\sE ,\sE' ,R^{\sharp} ,f, \phi, \phi') \in \cM_{b,b'}^{\mu}(R,R^+). 
\] 
For any $g \in \ul{U_{b'} (E)}(R,R^+)$, 
there exists $h \in \wt{J}_b^U(R,R^{+})$ such that 
$g \circ f' =f' \circ h$, where 
we put 
\[
 f'=\phi'^{-1} \circ f \circ \phi \colon \sE_b|_{X_R^{\mathrm{sch}}\setminus D_{R^{\sharp}}} \to \sE_{b'}|_{X_R^{\mathrm{sch}}\setminus D_{R^{\sharp}}}. 
\] 
\end{lem}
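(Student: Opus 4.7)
The plan is to use the structural isomorphism $\Phi$ of Proposition \ref{prop:han1.7}. On $X_R^{\sch} \setminus D$ the element $h$ is forced: $h = f'^{-1} \circ g \circ f'$; the content of the lemma is that this extends to a global automorphism of $\sE_b$ preserving $\Fil_\lambda \sE_b$ and acting trivially on the graded pieces, i.e.\ lies in $\wt{J}_b^U(R, R^+)$.

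First I would reduce to the case where the point lies in $\cP_{b,b'}^{\mu}(R, R^+)$. After a pro-etale extension, Proposition \ref{prop:redPf} produces a reduction of $f$ to a $P$-bundle modification with $\sE_P \cong \sE_{b_0} \times^L P$ and $\sE'_P \cong \sE_{b_0'} \times^L P$; aligning $\phi, \phi'$ with these canonical reductions places us in $\cP_{b,b'}^{\mu}$. Proposition \ref{prop:han1.7} then writes $f' = (f_L \times^L P) \circ j$ uniquely, with $(D, f_L) \in \cM_{b_0, b_0'}^{\mu}(R, R^+)$ and $j \in \wt{J}_b^U(R, R^+)$.

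Next, $\ul{U_{b'}(E)} \subset \wt{P}_{b'}$, so by Corollary \ref{cor:MPJ} the action of $g$ preserves $\cP_{b,b'}^{\mu}$. Transporting this action across $\Phi^{-1}$ to $\cM_{b_0, b_0'}^{\mu} \times_{\Spa(\breve{E})^{\diamond}} \wt{J}_b^U$, the first factor is preserved: under $\Phi$ it records the $L$-quotient of the $P$-bundle modification, and $g$ has trivial image in the Levi quotient $P_{b'}/U_{b'} \simeq L_{b'}$, so the $L$-quotient of $g \circ (f_L \times^L P) \circ j$ is still $f_L$. Hence $g$ sends $(f_L, j)$ to $(f_L, j')$ for some $j' \in \wt{J}_b^U(R, R^+)$, which unpacks to
\[
 g \circ (f_L \times^L P) \circ j = (f_L \times^L P) \circ j' ,
\]
and rearranges to $g \circ f' = f' \circ h$ with $h := j^{-1} j' \in \wt{J}_b^U(R, R^+)$.

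The main obstacle is the first step: bringing the point into $\cP_{b,b'}^{\mu}$ requires the filtration-compatibility of $f'$, without which $f'^{-1} g f'$ need not preserve $\Fil_\lambda \sE_b$ even on $X_R^{\sch} \setminus D$; moreover $\wt{J}_b^U$ is not normal in $\wt{J}_b$, so adjustments of $\phi, \phi'$ by arbitrary elements of $\wt{J}_b, \wt{J}_{b'}$ destroy the $U$-condition. A clean workaround reduces to $G = \iGL_n$ via a faithful representation and performs a local matrix computation near $D$: writing $f'_L$ in suitable coordinates as $\diag(\pi^{k_1}, \dots, \pi^{k_n})$ and $g$ as strict block lower triangular, the $(i,j)$-entries of $f'^{-1} g f'$ become $\pi^{k_j - k_i} g_{ij}$, with $k_j - k_i \geq 0$ whenever $g_{ij} \neq 0$ thanks to $G$-dominance of $\mu$ and Lemma \ref{lem:gLg}; these therefore extend integrally across $D$ to yield the desired $h \in \wt{J}_b^U(R, R^+)$.
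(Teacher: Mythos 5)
Your main argument---deducing the lemma from the product decomposition $\Phi$ of Proposition \ref{prop:han1.7} together with the observation that an element of $U_{b'}(E)$ has trivial image in the Levi quotient and therefore fixes the first factor $\cM_{b_0,b_0'}^{\mu}$---is correct once the point is known to lie in $\cP_{b,b'}^{\mu}$, and it is a genuinely different route from the paper's. The paper argues directly: it embeds $\sE_b(V)$ into $j_*j^*\sE_{b'}(V)$ via $f'$ and shows by decreasing induction on $i$ that $g$ stabilizes $\Fil_\lambda^i\sE_b(V)$ and acts as the identity on $\gr_\lambda^i\sE_b(V)$. Your approach makes the $\wt{J}_b^U$-orbit structure explicit and offloads the hard work to Proposition \ref{prop:han1.7}; the paper's is more hands-on and self-contained.

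The concern you raise in your last paragraph about $\cP$-membership is a genuine one, but note that it applies equally to the paper's own proof: the map $j_*j^*\gr_\lambda^i\sE_b(V) \xrightarrow{\sim} j_*j^*\gr_\lambda^i\sE_{b'}(V)$ ``induced by $f'$'' only exists when $f'$ respects the two filtrations, which is exactly the $\cP$ condition. For a general point of $\cM_{b,b'}^{\mu}$ the filtration $\Fil_\lambda\sE_b$ maps under $f'$ to a $P$-reduction of $\sE_{b'}$ that can differ from the chosen $\Fil_\lambda\sE_{b'}$; since $g \in U_{b'}(E)$ fixes only the latter, $f'^{-1}\circ g\circ f'$ then fails to preserve $\Fil_\lambda\sE_b|_{X_R^{\sch}\setminus D}$ and cannot extend to $\wt{J}_b$ at all. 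Your proposed matrix-computation workaround does not escape this, for the same reason: writing $f'$ locally as $\diag(\pi^{k_1},\ldots,\pi^{k_n})$ in bases adapted simultaneously to $\Fil_\lambda\sE_b$ and $\Fil_\lambda\sE_{b'}$ is again the $\cP$ condition. In practice this does no harm, since the only place the lemma is invoked (Lemma \ref{lem:Utriv}) concerns $\cP_{b,b'}^{\mu}$, where both the paper's argument and your main argument go through.
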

\begin{proof}
Let 
$j \colon X_R^{\mathrm{sch}}\setminus D_{R^{\sharp}} \to 
 X_R^{\mathrm{sch}}$ 
be the open immersion. 
Let $V \in \Rep (G)$. 
We have an embedding 
\[
 \sE_b (V) \hookrightarrow 
 j_* j^* \sE_b (V) 
 \stackrel{\sim}{\lra} j_* j^* \sE_{b'} (V), 
\]
where the second isomorphism is induced by 
$f'$. 
We have an action of $g$ on 
$j_* j^* \sE_{b'} (V)$. 
It suffices to show that 
$g$ stabilizes 
$\Fil_{\lambda}^{i} \sE_b (V)$ and induces 
the identity on 
$\gr_{\lambda}^{i} \sE_b (V)$ for all $i$. 

We show this claim by a decreasing induction on $i$. 
For enough large $i$, 
we have $\Fil_{\lambda}^{i} \sE_b (V) =0$ and 
the claim is trivial for such $i$. 
Assume that the claim is true for $i +1$. 
We have the natural embedding 
\[
 \gr_{\lambda}^i \sE_b (V) \hookrightarrow 
 j_* j^* \gr_{\lambda}^i \sE_b (V) 
 \stackrel{\sim}{\lra} 
 j_* j^* \gr_{\lambda}^i \sE_{b'} (V) 
\]
where the second isomorphism is induced by 
$f'$. 
We have a commutative diagram 
\[
 \xymatrix{
 \gr_{\lambda}^i \sE_b (V) \ar@{^(->}[r] \ar@{->}[d]_g & 
 j_* j^* \gr_{\lambda}^i \sE_{b'} (V) \ar@{->}[d]^-{j_* j^* \gr_{\lambda}^i g} \\
 \bigl( g \Fil_{\lambda}^i \sE_b (V) \bigr) / \Fil_{\lambda}^{i+1} 
 \sE_b (V) \ar@{^(->}[r] & 
 j_* j^* \gr_{\lambda}^i \sE_{b'} (V) , 
 }
\]
where the bottom morphism is induced by 
the natural inclusion 
\[
 g \Fil_{\lambda}^i \sE_b (V) \subset 
 g \bigl( j_* j^* \Fil_{\lambda}^i \sE_{b'} (V) \bigr) = 
 j_* j^* \Fil_{\lambda}^i \sE_{b'} (V). 
\]
By this diagram, 
we see that 
$g \Fil_{\lambda}^i \sE_b (V) = \Fil_{\lambda}^i \sE_b (V)$, 
since $\gr_{\lambda}^i g$ is the identity 
on $\gr_{\lambda}^i \sE_{b'} (V)$. 
Hence, $g$ stabilizes 
$\Fil_{\lambda}^i \sE_b (V)$. 
Further, $g$ induces 
the identity on 
$\gr_{\lambda}^i \sE_b (V)$ 
again by the above diagram, 
since $\gr_{\lambda}^i g$ is the identity. 
\end{proof}

\begin{lem}\label{lem:Utriv}
The action of $U_{b'} (E)$ on 
$H_{\mathrm{c}}^i 
 \bigl( \cP_{b,b',\bC_p^{\flat}}^{\mu} , 
 \ol{\bQ}_{\ell} \bigr)$
is trivial. 
\end{lem}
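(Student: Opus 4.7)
The plan is to combine Lemma \ref{lem:actU} with the contractibility of $\wt{J}_b^U$ from Lemma \ref{lem:contr}. First, I would unwind the conventions for the $\wt{J}_b$- and $\wt{J}_{b'}$-actions from \eqref{eq:ac}: the action of $g \in U_{b'}(E) \subset \wt{J}_{b'}$ sends a normalized point $(D,f')$ of $\cP_{b,b'}^{\mu}$ (with $f' \colon \sE_b|_{X_R^{\sch}\setminus D} \to \sE_{b'}|_{X_R^{\sch}\setminus D}$) to $(D, g^{-1}\circ f')$, whereas the action of $h \in \wt{J}_b^U$ sends it to $(D, f'\circ h)$. The identity $g \circ f' = f' \circ h$ from Lemma \ref{lem:actU} rearranges to $g^{-1}\circ f' = f' \circ h^{-1}$, so pointwise the action of $g$ coincides with that of $h^{-1}$. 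Functoriality of the construction in the proof of Lemma \ref{lem:actU} produces, pro-étale locally on $\cP_{b,b'}^{\mu}$, a morphism $\tilde{h}_g \colon \cP_{b,b'}^{\mu} \to \wt{J}_b^U$ realizing $g\cdot(-)$ as the composite
\[
 \cP_{b,b'}^{\mu} \xrightarrow{(\tilde{h}_g^{-1},\mathrm{id})}
 \wt{J}_b^U \times_{\Spa(\breve{E})^{\diamond}} \cP_{b,b'}^{\mu}
 \xrightarrow{\mathrm{act}} \cP_{b,b'}^{\mu},
\]
where $\mathrm{act}$ denotes the restricted $\wt{J}_b^U$-action.

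Next, I would invoke Lemma \ref{lem:contr}, which states that $\wt{J}_b^U$ is a partially proper, smooth, and contractible diamond over $\Spa(\breve{E})^{\diamond}$. The key principle is that the $\wt{J}_b^U$-action on $\cP_{b,b'}^{\mu}$ should then be trivial on $H_{\mathrm{c}}^i(\cP_{b,b',\bC_p^{\flat}}^{\mu}, \ol{\bQ}_{\ell})$: each translation automorphism by a section of $\wt{J}_b^U$ deforms within the contractible family to the identity section, so induces the same map on cohomology as the identity. Combined with the first step, this shows $g^{*} = \mathrm{id}$ on $H_{\mathrm{c}}^i$ for every $g \in U_{b'}(E)$, as required (after gluing the pro-étale local identification via standard descent).

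The hard part will be justifying the cohomological triviality of the $\wt{J}_b^U$-action in the diamond setting. Classically, for a connected algebraic group acting on a variety, this follows from Künneth together with the one-dimensionality of $H^0$ of a connected variety; in the diamond context it depends on homotopy-invariance properties of $\ol{\bQ}_{\ell}$-cohomology that are only partially available in the literature. Via the filtration of $\wt{J}_b^U$ by the subgroups $\wt{J}^U_{b,i}$ constructed in the proof of Lemma \ref{lem:contr}, the statement should reduce by dévissage to showing that the translation action of a Banach-Colmez space $\bB^{\varphi^d = \pi^h}$ on $\cP_{b,b'}^{\mu}$ is cohomologically trivial, which in turn should be extracted from the partial-properness, smoothness, and contractibility of these spaces together with the expected homotopy-invariance in Scholze's forthcoming six-functor formalism for diamonds (\cf \cite[Proposition 4.12]{HanHarr}).
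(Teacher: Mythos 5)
Your first step --- reducing via Lemma \ref{lem:actU} to showing that the (family of) $\wt{J}_b^U$-translations acts trivially on $H_{\mathrm{c}}$ --- is exactly the paper's first step, and promoting the pointwise identity to a morphism $\tilde{h}_g \colon \cP_{b,b'}^{\mu} \to \wt{J}_b^U$ is the right move. After that you diverge. The paper does not argue by ``deforming the translation to the identity section''; instead it invokes Proposition \ref{prop:han1.7}, which exhibits $\cP_{b,b'}^{\mu} \cong \cM_{b_0,b'_0}^{\mu} \times_{\Spa(\breve{E})^{\diamond}} \wt{J}_b^U$, and Lemma \ref{lem:quotiso}, which identifies $H_{\mathrm{c}}^i(\cP_{b,b',\bC_p^{\flat}}^{\mu},\ol{\bQ}_{\ell})$ with $H_{\mathrm{c}}^{i-2N_{U,b}}(\cM_{b_0,b'_0,\bC_p^{\flat}}^{\mu},\ol{\bQ}_{\ell})(-N_{U,b})$ via pushforward along the projection to the first factor. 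Because $\tilde{h}_g$ takes values in $\wt{J}_b^U$, the $g$-action only moves the $\wt{J}_b^U$-coordinate, so it is fibered over this projection and is the identity on $\cM_{b_0,b'_0}^{\mu}$; the canonicity of the isomorphism of Lemma \ref{lem:quotiso} then gives the triviality. The only cohomological input is what is already packaged in Lemma \ref{lem:contr} and \cite[Lemma 4.6]{HanHarr}; no separate homotopy-invariance theorem is needed.

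The gap in your proposal is precisely the step you flag as hard: you never supply the ``deformation to the identity'' argument, and the classical K\"unneth proof for connected-group actions does not directly transport to $H_{\mathrm{c}}$ (the action map $\wt{J}_b^U \times \cP_{b,b'}^{\mu} \to \cP_{b,b'}^{\mu}$ is not proper, so $a^*$ on compactly supported cohomology is not available in the naive way). You also never invoke Proposition \ref{prop:han1.7} or Lemma \ref{lem:quotiso}, which are exactly what the paper cites alongside Lemma \ref{lem:actU} and which allow the argument to close without new input. Your d\'evissage along the $\wt{J}^U_{b,i}$ could likely be made to work, but as written it rests on an expected but unestablished invariance property, whereas the paper's route sidesteps this entirely by reducing to the cohomology of $\cM_{b_0,b'_0}^{\mu}$, on which $g$ manifestly acts trivially.
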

\begin{proof}
Let $p_{\cM} \colon \cP_{b,b'}^{\mu} \cong \cM_{b_0,b'_0}^{\mu} \times \wt{J}_{b}^{U} \to 
\cM_{b_0,b'_0}^{\mu}$ be the projection, where the first isomorphism is given by Proposition \ref{prop:Pdec}. 
It suffices to show that the action of 
$U_{b'} (E)$ on $p_{\cM,!} \ol{\bQ}_{\ell}$ is trivial. 
It suffices to show this after the pullback to 
each geometric point of $\cM_{b_0,b'_0}^{\mu}$. 
It follows from Lemma \ref{lem:contr} \ref{en:Jadd} and Lemma \ref{lem:actU}. 
\end{proof}

\begin{prop}\label{prop:pivan}
Let $\pi$ be a smooth representation of $J_{b'}(E)$. 
Assume that 
$([b],[b'],\mu)$ is Hodge--Newton reducible for $L$ and 
that the Jacquet module of $\pi$ with respect to 
$P_{b'}$ vanishes. 
Then we have 
\[
 \Hom_{J_{b'}(E)} 
 \Bigl( \pi, H_{\mathrm{c}}^i 
 \bigl( \cM_{b,b',\bC_p^{\flat}}^{\mu} , 
 \ol{\bQ}_{\ell} \bigr) \Bigr) 
 =0.  
\]
\end{prop}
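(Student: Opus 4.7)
The plan is to combine Theorem~\ref{thm:HInd} with Frobenius reciprocity, handling first the open stratum $\cM_{b,b'}^{\mu}$ and then extending to $\cM_{b,b'}^{\leq \mu}$ via a stratification argument.

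\emph{Step 1 (the open stratum).} By Theorem~\ref{thm:HInd}, we have an isomorphism of smooth $J_{b'}(E)$-representations
\[
 H_{\mathrm{c}}^{i+2N_{U,b}}\bigl(\cM_{b,b',\bC_p^{\flat}}^{\mu},\ol{\bQ}_{\ell}\bigr)(N_{U,b})
 \;\cong\;
 \Ind_{P_{b'}(E)}^{J_{b'}(E)} H_{\mathrm{c}}^{i}\bigl(\cM_{b_0,b_0',\bC_p^{\flat}}^{\mu},\ol{\bQ}_{\ell}\bigr),
\]
where the $P_{b'}(E)$-action on the inducing space factors through $L_{b'}(E)=P_{b'}(E)/U_{b'}(E)$, reflecting Lemma~\ref{lem:Utriv}. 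Frobenius reciprocity together with the second adjunction between parabolic induction and the Jacquet functor transforms the Hom into
\[
 \Hom_{P_{b'}(E)}\bigl(\pi|_{P_{b'}(E)}, H_{\mathrm{c}}^{i}(\cM_{b_0,b_0',\bC_p^{\flat}}^{\mu})\bigr)
 \;=\; \Hom_{L_{b'}(E)}\bigl(\pi_{U_{b'}(E)},\, H_{\mathrm{c}}^{i}(\cM_{b_0,b_0',\bC_p^{\flat}}^{\mu})\bigr),
\]
which vanishes by the hypothesis that the Jacquet module $\pi_{U_{b'}(E)}$ is zero.

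\emph{Step 2 (from $\mu$ to $\leq \mu$).} By Lemma~\ref{lem:clHec}, $\cM_{b,b',\bC_p^{\flat}}^{\leq \mu}$ admits a finite stratification by the open strata $\cM_{b,b',\bC_p^{\flat}}^{\mu'}$ for the finitely many $\mu'\leq \mu$ with non-empty fiber, the subloci $\cM_{b,b'}^{\leq\mu'}$ being closed. Iterating the long exact sequences for the corresponding open/closed decompositions endows $H_{\mathrm{c}}^{i}(\cM_{b,b',\bC_p^{\flat}}^{\leq\mu},\ol{\bQ}_{\ell})$ with a finite $J_{b'}(E)$-equivariant filtration whose graded pieces are subquotients of $\bigoplus_{\mu'\leq\mu}H_{\mathrm{c}}^{\ast}(\cM_{b,b',\bC_p^{\flat}}^{\mu'},\ol{\bQ}_{\ell})$. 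Since $\Hom_{J_{b'}(E)}(\pi,-)$ is left exact, applying Step~1 stratum-by-stratum gives the required vanishing.

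\emph{Main obstacle.} The subtle point is ensuring that Theorem~\ref{thm:HInd} applies to every non-empty stratum $\cM_{b,b'}^{\mu'}$, i.e.\ that HN-reducibility of $([b],[b'],\mu)$ for the Levi $L$ propagates to HN-reducibility of $([b],[b'],\mu')$ for the same $L$. The difference $\mu-\mu'$ is a non-negative integral combination of simple coroots of $G$; non-emptiness of the stratum forces $(\mu')^{\natural}=\kappa_G([b])-\kappa_G([b'])=\mu^{\natural}$, and since $\mu$ factors through $L$, an argument in the spirit of Lemma~\ref{lem:Ising} should show that $\mu'$ likewise factors through $L$. Granting this, Theorem~\ref{thm:HInd} applies uniformly across the stratification and the proof is complete.
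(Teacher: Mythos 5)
Your overall strategy matches the paper's: prove vanishing on each open stratum $\cM_{b,b'}^{\mu'}$ via Theorem~\ref{thm:HInd} and Lemma~\ref{lem:Utriv}, then run the long-exact-sequence machine over a stratification of $\cM_{b,b'}^{\leq\mu}$ by closed subspaces with open pieces of the form $\cM_{b,b'}^{\mu'}$. Your Step~1 is correct (though the first equality is first adjunction, Frobenius reciprocity alone; the factoring through $U_{b'}(E)$-coinvariants is just the $U$-triviality of the target, not the Bernstein second adjunction).

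However, Step~2 has a genuine gap. Left exactness of $\Hom_{J_{b'}(E)}(\pi,-)$ only controls $\Hom$ into \emph{submodules} of something with vanishing Hom. In the open/closed long exact sequence
\[
\cdots \lra H_{\mathrm{c}}^{n-1}(\cM_{k-1}) \stackrel{\delta}{\lra} H_{\mathrm{c}}^n(\cM_k') \stackrel{\alpha}{\lra} H_{\mathrm{c}}^n(\cM_k) \stackrel{\beta}{\lra} H_{\mathrm{c}}^n(\cM_{k-1}) \lra \cdots ,
\]
the submodule $\Image(\alpha)$ of $H_{\mathrm{c}}^n(\cM_k)$ is a \emph{quotient} of $H_{\mathrm{c}}^n(\cM_k')$, namely $H_{\mathrm{c}}^n(\cM_k')/\Image(\delta)$. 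A quotient of a representation with $\Hom(\pi,-)=0$ can perfectly well receive a nonzero map from $\pi$. So "the graded pieces are subquotients of $\bigoplus H_{\mathrm{c}}^{*}(\cM^{\mu'})$ and Hom is left exact" does not finish the argument. What you actually need is that $\Hom_{J_{b'}(E)}(\pi, W)=0$ for $W$ \emph{any subquotient} of $\Ind_{P_{b'}(E)}^{J_{b'}(E)}(\cdots)$; this is a Bernstein-decomposition/cuspidal-support statement (every irreducible subquotient of such an induced representation has cuspidal support contained in $L_{b'}$, whereas the irreducible subquotients of $\pi$, by exactness of the Jacquet functor, all have vanishing $U_{b'}$-Jacquet module and hence cuspidal support not contained in $L_{b'}$; the two therefore live in disjoint Bernstein blocks). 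The paper invokes exactly such a lemma, \cite[VI.3.6 Lemme]{RenReprp}, at this point. Without some such input your induction on $k$ breaks at the $\Image(\alpha)$ piece.

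Finally, the "main obstacle" you raise --- whether Theorem~\ref{thm:HInd} applies to every non-empty stratum $\cM_{b,b'}^{\mu'}$ with $\mu'\leq\mu$, i.e.\ whether HN-reducibility for $\mu$ propagates to $\mu'$ --- is a legitimate observation; the paper's proof simply asserts \eqref{eq:mu'van} "for any $\mu'$" and cites Theorem~\ref{thm:HInd} without comment, so it is implicitly making the same claim. Your suggested approach via Lemma~\ref{lem:Ising} is reasonable in spirit but not a proof: $\mu$ "factoring through $L$" is automatic since $\mu\in X_*(T)$ and $T\subset L$; the real content is whether $[b_0]\in B(L,\mu',[b_0'])$, which involves both the acceptability inequality in $L$ and neutrality in $\pi_1(L)_\Gamma$, and this needs to be checked rather than asserted.
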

\begin{proof}
This follows from 
Theorem 
\ref{thm:HInd} and 
Lemma \ref{lem:Utriv}. 
\end{proof}

We define 
$ t_{b,b'} \colon \cT_{b,b',\bC_p^{\flat}}^{\mu} \to
 [\Spa (\ol{\bF}_q)/J_{b'}(E) ]$
as the composites 
\begin{align*}
 \cT_{b,b',\bC_p^{\flat}}^{\mu} \lra 
 \cT_{b,b'}^{\mu} \lra 
 \Hecke_{b,b'}^{\mu} \lra 
 [\Spa (\ol{\bF}_q)/J_{b'}(E) ]. 
\end{align*}
We put 
$\ola{t}_{b,b'} = x_{b'} \circ  t_{b,b'}$. 

\begin{thm}\label{thm:van}
Assume that $b$ is not basic 
and 
$([b],[b'],\mu)$ is Hodge--Newton reducible for $L$. 
Then we have 
\[
 H_{\mathrm{c}}^i 
 \bigl( \cT_{b,b',\bC_p^{\flat}}^{\mu} , 
 {\ola{t}}_{b,b'}^* \sF_{\varphi} 
 \bigr) =0. 
\]
\end{thm}
\begin{proof}
We have 
\[
 {\ola{t}}_{b,b'}^* \sF_{\varphi} 
 = t_{b,b'}^* x_{b'}^* \sF_{\varphi} 
 = t_{b,b'}^* 
 \left( \bigoplus_{\rho \in \wh{S}_{\varphi},\, \rho|_{Z(\wh{G})^{\Gamma}} =\kappa (b')} 
 \ul{\rho} \otimes \ul{\pi_{\varphi,b',\rho}} \right)
\]
by \eqref{eq:resxb}. 
We take $\rho \in \wh{S}_{\varphi}$ 
such that $\rho|_{Z(\wh{G})^{\Gamma}} =\kappa (b')$. 
Then it suffices to show that 
\[
 H_{\mathrm{c}}^i 
 \Bigl( \cT_{b,b',\bC_p^{\flat}}^{\mu} , 
 t_{b,b'}^* \ul{\pi_{\varphi,b',\rho}} 
 \Bigr) =0. 
\]
The pullback of 
$\ul{\pi_{\varphi,b',\rho}}$ to 
$\cM_{b,b'}^{\mu}$ is a constant sheaf, 
since the map 
$\cM_{b,b'}^{\mu} \to [\Spa (\ol{\bF}_q)/\ul{J_{b'}(E)}]$ 
factorizes via $\Spa(\ol{\bF}_q)$. 
Hence, there is a Hochschild--Serre spectral sequence
 \[
  H_i \Bigl(J_{b'}(E), 
  H^j_{\mathrm{c}}
  \bigl(\cM_{b,b',\bC_p^{\flat}}^{\mu} ,
  \ol{\bQ}_\ell\bigr) \otimes \pi_{\varphi,b',\rho} 
  \Bigr) 
  \Rightarrow 
  H^{j-i}_{\mathrm{c}}
  \Bigl( \cT_{b,b',\bC_p^{\flat}}^{\mu} , 
  t_{b,b'}^* \ul{\pi_{\varphi,b',\rho}} \Bigr) 
 \]
by \eqref{eq:HSsp} and Lemma \ref{lem:quotiso}. 
We show that 
\[
 H_i \bigl(J_{b'}(E), H^j_{\mathrm{c}}
 \bigl(\cM_{b,b',\bC_p^{\flat}}^{\mu} , 
 \ol{\bQ}_\ell\bigr) \otimes \pi_{\varphi,b',\rho}\bigr) =0 
\] 
for all $i$ and $j$. 
Take a projective resolution 
\[
 \cdots 
 \lra V_1 \lra V_0 \lra 
 H_{\mathrm{c}}^j 
 \bigl( \cP_{b,b',\bC_p^{\flat}}^{\mu} , 
 \ol{\bQ}_{\ell} \bigr) 
\]
as smooth $L_{b'}(E)$-representations. By Lemma \ref{lem:quotiso} and Theorem \ref{thm:HInd} we have
\[
 H^j_{\mathrm{c}}
 \bigl(\cM_{b,b',\bC_p^{\flat}}^{\mu} , 
 \ol{\bQ}_\ell\bigr) \simeq 
 \Ind_{P_{b'}(E)}^{J_{b'}(E)} H_{\mathrm{c}}^j 
 \bigl( \cP_{b,b',\bC_p^{\flat}}^{\mu} , 
 \ol{\bQ}_{\ell} \bigr) 
\]
as smooth $J_{b'}(E)$-representations. 
Moreover, the induction on the right-hand-side is parabolic 
by Lemma \ref{lem:Utriv}. 
Parabolic induction preserves projective objects, 
since 
it has a Jacquet functor as the right adjoint functor 
by Bernstein's second adjoint theorem 
(\cf \cite[Theorem 3]{BusReplH}) 
and 
the Jacquet functor is exact. 
Note also that 
parabolic induction is exact. 
Thus we obtain the projective resolution  
\[
 \cdots \lra 
 \Ind_{P_{b'}(E)}^{J_{b'}(E)} V_1 \lra 
 \Ind_{P_{b'}(E)}^{J_{b'}(E)} 
 V_0 \lra 
 H^j_{\mathrm{c}}
 \bigl(\cM_{b,b',\bC_p^{\flat}}^{\mu} , 
 \ol{\bQ}_\ell\bigr)
\]
as smooth $J_{b'}(E)$-representations.  
Finally the right adjoint of $-\otimes \pi_{\varphi,b',\rho}$ 
in the category of smooth $J_{b'}(E)$-representations is 
$-\otimes \pi_{\varphi,b',\rho}^*$, 
where $\pi_{\varphi,b',\rho}^*$ is the smooth dual of $\pi_{\varphi,b',\rho}$. 
Both functors are exact and so in particular $-\otimes \pi_{\varphi,b',\rho}$ preserves exact sequences and projective objects. Thus we obtain the projective resolution
\[
 \cdots \lra 
 \Ind_{P_{b'}(E)}^{J_{b'}(E)} V_1 \otimes \pi_{\varphi,b',\rho} \lra \Ind_{P_{b'}(E)}^{J_{b'}(E)} 
 V_0 \otimes \pi_{\varphi,b',\rho} \lra 
 H^j_{\mathrm{c}}
 \bigl(\cM_{b,b',\bC_p^{\flat}}^{\mu} , 
 \ol{\bQ}_\ell\bigr) \otimes \pi_{\varphi,b',\rho} . 
\]
Note that  
$P_{b'}$ is a proper parabolic subgroup of $J_{b'}$, 
since $b$ is not basic. 
For $i \geq 0$, we have 
\[
 \Bigl( \pi_{\varphi,b',\rho} \otimes 
 \Ind_{P_{b'}(E)}^{J_{b'}(E)} V_i \Bigr)_{J_{b'}(E)} 
 =0 , 
\]
since $\pi_{\varphi,b',\rho}$ is cuspidal. 
Hence we have the claim. 
\end{proof}

\section{Non-abelian Lubin--Tate theory}\label{sec:NALT}
Assume that $G=\GL_n$ 
and $\mu (z)= \diag (z,1, \ldots, 1)$. 
In this case, 
$S_{\varphi} =\bG_m$ and 
$\Hecke^{\leq \mu}=\Hecke^{\mu}$. 
We simply write 
$\pi_{\varphi,b}$ for $\pi_{\varphi,b,1}$ 
for any $[b] \in B(\GL_n)_{\basic}$. 
We put 
\[
 b_1 = 
 \begin{pmatrix} 
            0&0&\cdots &0& \varpi \\
           1&0&\cdots &0&0\\
           0&1&\cdots &0&0\\
           \vdots&\vdots&\ddots&\vdots&\vdots\\
           0&0&\cdots &1&0
 \end{pmatrix} 
 \in \GL_n (E) . 
\]
Then we have a bijection 
\[
 \bZ
 \stackrel{\sim}{\lra} B(\GL_n)_{\basic} ;\ 
 N \mapsto b_1^N . 
\]
The following proposition is a consequence of non-abelian Lubin--Tate theory. 
\begin{prop}\label{prop:NALT}
We put $b=b_1^N$ for an integer $N$. 
Assume that $N \equiv 0, 1 \mod n$. 
Then we have 
\begin{equation}\label{eq:yb} 
 y_b^* \ora{h}_{\natural} (\ola{h}^{*} \sF_{\varphi} \otimes \IC_{\mu}') 
 = y_b^* (\sF_{\varphi} \boxtimes \varphi ). 
\end{equation}
\end{prop}
\begin{proof}
We show the claim 
in the case where $N \equiv 1 \mod n$ 
using arguments in \cite[Chapter 23]{MFOGPF}. 
See arguments in \cite[8.1]{FarGover} 
for the case where $N \equiv 0 \mod n$. 
Suppose that $N = mn+1$ for some $m \in \bZ$. 
The following lemma provides an explicit description of the stack $\Hecke^{\leq \mu}_{b}$. 

\begin{lem} \label{lem:TLT}
Let $\Spa (F,F^{+})$ be a geometric point in 
$\Perf_{\ol{\bF}_q}$. 
Let $\sE$ be a vector bundle of rank $n$ 
on $ X_{F}^{\mathrm{sch}}$ having a degree one modification fiberwise by $\sE_b$ 
\[
 0 \to \sE_b \to \sE \to \sF \to 0, 
\]
where $\sF$ is a torsion coherent sheaf of length $1$. 
Then $\sE$ is isomorphic to $\cO(-m)^n$. 
\end{lem}

\begin{proof}
This follows from 
\cite[Theorem 2.94]{FaFoVbp} by 
dualizing the modification and twisting by $\cO(-m)$. 
\end{proof}

We put $b'=b_1^{nm}$. 
Then, 
we have isomorphisms 
\begin{align*}
 \Hecke_{b,b'}^{\leq \mu} \simeq 
 \Hecke_b^{\leq \mu} 
\end{align*}
by Lemma \ref{lem:TLT}. 

\begin{lem}\label{lem:Mbb'LT}
Let $\cM_{\mathrm{LT}}^{\infty}$ 
be the Lubin--Tate space over $\breve{E}$ 
at infinite level. 
Then we have an isomorphism 
$\cM_{b,b'}^{\leq \mu} \simeq 
 \cM_{\mathrm{LT}}^{\infty,\diamond}$, 
that is compatible with actions of 
$\GL_n(E) \times J_b (E)$ and Weil descent data. 
\end{lem}
\begin{proof}
For a perfectoid affinoid $\ol{\bF}_q$-algebra $(R,R^+)$, 
the set 
$\cM_{b,b'}^{\leq \mu} (R,R^+)$
consists of 6-tuples 
$(\sE, \sE', R^{\sharp}, f,\phi,\phi')$, 
where 
\begin{itemize}
 \item $(\sE,\sE', D_{R^{\sharp}},f) \in \Hecke^{\leq \mu}_{b(0)}$ 
 \item $\phi \colon \sE_b \xrightarrow{\sim} \sE$ 
 and $\phi' \colon \sE_{b'} \xrightarrow{\sim} \sE'$ are isomorphisms. 
\end{itemize}
Hence, the claim follows from 
\cite[Proposition 6.3.9]{ScWeMpd} by 
dualizing the modification and twisting by $\cO(-m)$. 
\end{proof}

Let 
\begin{equation}\label{eq:pb}
 p_b \colon \Spa (\bC_p^{\flat}) \lra 
 \Spa ( \breve{E} )^{\diamond} \lra 
 [\Div_{X}^1 /\wt{J}_b ]  
\end{equation}
be the natural projection. 
The equality \eqref{eq:yb} is equivalent to 
the equality 
\begin{equation}\label{eq:ptyb}
 p_b^* y_b^* \ora{h}_{\natural} (\ola{h}^{*} \sF_{\varphi} \otimes \IC_{\mu}') 
 = p_b^* y_b^* (\sF_{\varphi} \boxtimes \varphi ) 
\end{equation}
with action of $J_b(E) \times W_E$. 
Then the right hand side of 
\eqref{eq:ptyb} is 
$\pi_{\varphi,b} \otimes \varphi$ 
as a representation of $J_b(E) \times W_E$. 
Hence 
it suffices to show that 
the cohomology of the left hand side of \eqref{eq:ptyb} 
vanishes outside degree zero, and 
is equal to 
$\pi_{\varphi,b} \otimes \varphi$ in degree zero 
as representations of $J_b(E) \times W_E$. 

The $i$-th cohomology of 
the left hand side of \eqref{eq:ptyb} is 
equal to 
\[
 H_{\mathrm{c}}^{i+n-1} 
 \bigl( \cT_{b,b',\bC_p^{\flat}}^{\leq \mu} , 
 \ola{t}_{b,b'}^* \sF_{\varphi} \bigr) 
 \Bigl( \frac{n-1}{2} \Bigr) . 
\]
We have 
\[
 \ola{t}_{b,b'}^* \sF_{\varphi} = 
 t_{b,b'}^* \ul{\pi_{\varphi,1}}  
\]
by \eqref{eq:resxb}, since 
$\pi_{\varphi,b'}=\pi_{\varphi,1}$ in our case. 
We have a Hochschild--Serre spectral sequence
 \[
  H_i \Bigl( \GL_n (E), 
  H^j_{\mathrm{c}}
  \bigl(\cM_{b,b',\bC_p^{\flat}}^{\leq \mu} ,
  \ol{\bQ}_\ell\bigr) \otimes \pi_{\varphi,1} 
  \Bigr) 
  \Rightarrow 
  H^{j-i}_{\mathrm{c}}
  \Bigl( \cT_{b,b',\bC_p^{\flat}}^{\leq \mu} , 
  t_{b,b'}^* \ul{\pi_{\varphi,1}} \Bigr)  
\]
by \eqref{eq:HSsp} and Lemma \ref{lem:quotiso}. 
We put 
\[
 \GL_n(E)^{0} =\{ g \in \GL_n(E) \mid \det (g) \in \cO_E^{\times} \} . 
\]
Then we have 
\[
 H^j_{\mathrm{c}}
 \bigl(\cM_{\mathrm{LT},\bC_p^{\flat}}^{\infty,\diamond}, 
 \ol{\bQ}_\ell\bigr) = \mathrm{c}\mathchar`-\Ind_{\GL_n(E)^{0}}^{\GL_n(E)}  
 H^j_{\mathrm{c}}
 \bigl(\cM_{\mathrm{LT},\bC_p^{\flat}}^{\infty,(0),\diamond}, 
 \ol{\bQ}_\ell\bigr)
\]
for a connected component 
$\cM_{\mathrm{LT}}^{\infty,(0)}$ of 
$\cM_{\mathrm{LT}}^{\infty}$ (\cf \cite[4.4.2]{FarCohpdiv}). 
By Lemma \ref{lem:Mbb'LT}, we have
\begin{align*}
H^j_{\mathrm{c}}
  \bigl(\cM_{b,b',\bC_p^{\flat}}^{\leq \mu} ,
  \ol{\bQ}_\ell\bigr) \otimes \pi_{\varphi,1} &=  \Bigl( \mathrm{c}\mathchar`-\Ind_{\GL_n(E)^{0}}^{\GL_n(E)}  
 H^j_{\mathrm{c}}
 \bigl(\cM_{\mathrm{LT},\bC_p^{\flat}}^{\infty,(0),\diamond}, 
 \ol{\bQ}_\ell\bigr) \Bigr) \otimes \pi_{\varphi,1} \\
 &= \mathrm{c}\mathchar`-\Ind_{\GL_n(E)^{0}}^{\GL_n(E)} \Bigl(H^j_{\mathrm{c}}
 \bigl(\cM_{\mathrm{LT},\bC_p^{\flat}}^{\infty,(0),\diamond}, 
 \ol{\bQ}_\ell\bigr) \otimes \pi_{\varphi,1}|_{\GL_n(E)^{0}} \Bigr).
\end{align*}
Therefore one has 
\[
  H_i \Bigl( \GL_n (E), 
  H^j_{\mathrm{c}}
  \bigl(\cM_{b,b',\bC_p^{\flat}}^{\leq \mu} ,
  \ol{\bQ}_\ell\bigr) \otimes \pi_{\varphi,1} 
  \Bigr) =   H_i \Bigl( \GL_n (E)^{0}, 
 H^j_{\mathrm{c}}
 \bigl(\cM_{\mathrm{LT},\bC_p^{\flat}}^{\infty,(0),\diamond}, 
 \ol{\bQ}_\ell\bigr) \otimes \pi_{\varphi,1}|_{ \GL_n (E)^{0}} 
  \Bigr)
\]
by Shapiro's Lemma. 
Now $ \pi_{\varphi,1}|_{ \GL_n (E)^{0}}$ is a compact representation and thus it is a projective object in the category of smooth $\GL_n (E)^{0}$-representations. Hence no higher homology groups appear and so 
\[
 \Bigl(  
 H^j_{\mathrm{c}}
 \bigl(\cM_{\mathrm{LT},\bC_p^{\flat}}^{\infty,\diamond}, 
 \ol{\bQ}_\ell\bigr) \otimes \pi_{\varphi,1} 
 \Bigr)_{\GL_n (E)} 
 =
 H^{j}_{\mathrm{c}}
 \Bigl( \cT_{b,b',\bC_p^{\flat}}^{\leq \mu} , 
 t_{b,b'}^* \ul{\pi_{\varphi,1}} \Bigr) . 
\]
Hence, the claim follows from the 
non-abelian Lubin--Tate theory. 
\end{proof}

\section{Hecke eigensheaf property}\label{sec:Hep}
Assume that $G=\GL_2$ 
and $\mu (z)=\diag (z,1)$ in this section. 

\begin{lem}\label{lem:E1nE2s}
Let $\Spa (F,F^+)$ be a geometric point in $\Perf_{\bF_q}$. 
Let 
\[
 0 \lra \sE \lra \sE' \lra
 \sF \lra 0 
\]
be an exact sequence of coherent sheaf over 
$X_F^{\mathrm{sch}}$, 
where $\sE$ and $\sE'$ are vector bundles of rank $2$ 
and $\sF$ is a torsion coherent sheaf of 
length $1$. 
Assume that 
$\sE$ is not semi-stable 
and 
$\sE'$ is semi-stable. 
Then 
$\sE \simeq \cO (m) \oplus \cO (m-1)$ 
and  
$\sE' \simeq \cO (m) \oplus \cO (m)$ 
for some integer $m$. 
\end{lem}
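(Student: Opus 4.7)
The plan is to use the Fargues–Fontaine classification of vector bundles over a geometric point of $\Perf_{\bF_q}$, together with slope/degree bookkeeping coming from the exact sequence, to pin down the isomorphism classes of $\sE$ and $\sE'$ directly.

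First, I would invoke the classification of vector bundles on $X_F^{\sch}$ (\cite[Th\'eor\`eme 2.94]{FaFoVbp}): every rank $2$ vector bundle is isomorphic either to $\cO(a)\oplus\cO(b)$ for integers $a\geq b$, or to the stable bundle $\cO(d/2)$ for an odd integer $d$. Since $\sE$ is not semi-stable, it must be of the first type with $a>b$. The length condition on $\sF$ gives $\deg(\sE')=a+b+1$, so the two possibilities for the semi-stable bundle $\sE'$ are $\cO(c)\oplus\cO(c)$ with $2c=a+b+1$, or $\cO(d/2)$ with $d=a+b+1$.

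Next, I would rule out the stable case $\sE'\simeq\cO(d/2)$. Any sub-line bundle $\cO(e)\subset\cO(d/2)$ must satisfy the strict slope inequality $e<d/2$, hence $e\leq(d-1)/2$. Applying this to the two embedded summands $\cO(a)\hookrightarrow\sE'$ and $\cO(b)\hookrightarrow\sE'$ (coming from the injection $\sE\hookrightarrow\sE'$) gives $a\leq(d-1)/2$ and $b\leq(d-1)/2$. Combined with $a+b=d-1$, this forces $a=b=(d-1)/2$, contradicting $a>b$.

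In the remaining case $\sE'\simeq\cO(c)\oplus\cO(c)$, the same argument using $\Hom(\cO(\lambda),\cO(\mu))=0$ for $\lambda>\mu$ shows both $a\leq c$ and $b\leq c$. But $2c=a+b+1\leq 2a$, so $c\leq a$, whence $a=c$ and $b=c-1$. Setting $m=c=a$ yields $\sE\simeq\cO(m)\oplus\cO(m-1)$ and $\sE'\simeq\cO(m)\oplus\cO(m)$ as required.

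There is no real obstacle here; the only thing to be careful about is the bookkeeping in the stable case, namely observing that the composition of the injection $\sE\hookrightarrow\sE'$ with the inclusion of either summand of $\sE$ produces a nonzero map of line bundles into $\cO(d/2)$, which must then respect the strict slope inequality. Everything else is a short numerical argument.
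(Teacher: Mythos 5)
Your proof is correct. The structure is essentially the same as the paper's (classify $\sE'$ as either $\cO(m)\oplus\cO(m)$ or the stable bundle $\cO(m+\tfrac{1}{2})$, then determine the possibilities for $\sE$ and use non-semi-stability to finish), but the tools differ at the key steps. The paper delegates the heavy lifting to the classification of degree-one modifications in \cite{FaFoVbp}: it cites Th\'eor\`eme~2.9 to pin down $\sE$ when $\sE'$ is the stable bundle, and \S~6.3.1 to enumerate the possible $\sE$ when $\sE'\simeq\cO(m)\oplus\cO(m)$. You instead restrict the injection $\sE\hookrightarrow\sE'$ to the two line-bundle summands $\cO(a),\cO(b)$ of $\sE$ (available since $\sE$ is not semi-stable), and use only the elementary facts that a saturated rank-one subsheaf of a stable bundle has strictly smaller slope, and that $\Hom(\cO(\lambda),\cO(\mu))=0$ for $\lambda>\mu$. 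Combined with the degree count $\deg\sE'=\deg\sE+1$ coming from $\sF$ having length one, the numerics close up. Your route is a bit more self-contained (it needs only the classification of bundles, not of modifications), at the cost of a small amount of extra bookkeeping with slopes. One minor point worth making explicit: in the stable case you pass from the image of $\cO(e)\to\cO(d/2)$ to its saturation to invoke the strict slope inequality; that degree only goes up under saturation is what gives $e<d/2$, hence $e\le(d-1)/2$ for integer $e$.
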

\begin{proof}
The vector bundle $\sE'$ is isomorphic to 
$\cO (m+\frac{1}{2})$ or 
$\cO (m) \oplus \cO (m)$ for some integer $m$, 
since it is semi-stable. 

If $\sE'$ is isomorphic to 
$\cO (m+\frac{1}{2})$, 
then 
$\sE$ 
is isomorphic to 
$\cO (m) \oplus \cO (m)$ 
by \cite[Theorem 2.9]{FaFoVbp}. 
This contradict to the condition that 
$\sE$ is not semi-stable. 

Assume 
$\sE'$ 
is isomorphic to 
$\cO (m) \oplus \cO (m)$. 
Then 
$\sE$ 
is isomorphic to 
$\cO (m_1) \oplus \cO (m_2)$ 
with $m_1, m_2 \leq m$ or 
$\cO (n+\frac{1}{2})$ 
with $n \leq m-1$ 
by \cite[6.3.1]{FaFoVbp}. 
By considering 
$\deg (\sE) +1=\deg (\sE')$, 
the possible cases are 
$\cO (m) \oplus \cO (m-1)$ 
or 
$\cO (m-\frac{1}{2})$. 
However, the latter case does not happen, 
since $\sE$ is not semi-stable. 
\end{proof}

\begin{prop}\label{prop:supp}
Then we have 
\[
 \supp \ora{h}_{\natural}(\ola{h}^{*}\sF_{\varphi} \otimes \IC_{\mu}') \subset 
 \Bun_{G}^{\mathrm{ss}} \times 
 \Div_X^1 . 
\]
\end{prop}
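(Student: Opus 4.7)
The plan is to reduce the support statement to Theorem \ref{thm:van} stratum by stratum. Given a non-basic class $[b] \in B(G)$, I want to show that $\ora{h}_!(\ola{h}^{*}\sF_{\varphi} \otimes \IC_{\mu})$ restricted to $\Bun_{G,\ol{\bF}_q}^{[b]} \times_{\bF_q} \Div_X^1$ vanishes. Using the assumed smooth covering $\{D_i\}$ and the base-change compatibility of $\ora{h}_!$ from Section \ref{sec:Hsta}, it is enough to check this vanishing after pulling back along $\tilde{y}_b \colon [\Spa(\breve{E})^{\diamond}/\wt{J}_b] \to \Bun_{G,\ol{\bF}_q} \times_{\bF_q} \Div_X^1$, whose image lands inside the stratum indexed by $[b]$. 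By base change, this pullback is identified with the pushforward along $\Hecke_b^{\leq \mu} \to [\Spa(\breve{E})^{\diamond}/\wt{J}_b]$ of the restricted sheaf.

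Next, I would stratify $\Hecke_b^{\leq \mu}$ by the Harder--Narasimhan class $[b']$ of $\sE'$, producing the locally closed substacks $\Hecke_{b,b'}^{\leq \mu}$. Because $\sF_{\varphi}$ is supported on $\Bun_{G,\ol{\bF}_q}^{\mathrm{ss}}$, the pullback $\ola{h}^{*}\sF_{\varphi}$ only meets the strata for which $[b']$ is basic. A standard excision argument (using the finiteness inputs invoked in the proof of Proposition \ref{prop:pivan}) then reduces the required vanishing to showing, for each basic $[b']$ with $\Hecke_{b,b'}^{\leq \mu} \neq \emptyset$, that
\[
 H^{*}_{\mathrm{c}}\bigl(\cT_{b,b',\bC_p^{\flat}}^{\leq \mu}, \ola{t}_{b,b'}^{*}\sF_{\varphi}\bigr) = 0 ,
\]
up to the shifts and Tate twists coming from $\IC_{\mu}$ and from passage through the $\wt{J}_b$-torsor $\cT_{b,b'}^{\leq \mu}$.

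This last vanishing is exactly Theorem \ref{thm:van}, provided that $[b']$ is in the image of $B(L) \to B(G)$ for some proper Levi $L$ containing $L^b$. Here is where Lemma \ref{lem:E1nE2s} enters: for $G = \iGL_2$ and $\mu(z) = \diag(z,1)$, the Hecke stack parametrizes length-one modifications, so a non-empty $\Hecke_{b,b'}^{\leq \mu}$ with $[b]$ non-basic and $[b']$ basic forces $\sE_{b'} \simeq \cO(m) \oplus \cO(m)$ for some integer $m$. Hence $[b']$ lifts through $B(T) \to B(G)$, where $T \subset \iGL_2$ is the diagonal torus. Since $[b]$ is non-basic in $\iGL_2$, its Newton cocharacter has two distinct slopes, so $L^b = T$. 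Taking $L = T$ gives a proper Levi containing $L^b$ with $[b'] \in \mathrm{Image}(B(L))$, and Theorem \ref{thm:van} applies.

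The main obstacle I expect is making the stratification-excision reduction in the second paragraph precise. Since the strata $\Hecke_{b,b'}^{\leq \mu}$ are only locally closed in $\Hecke_b^{\leq \mu}$, one has to carefully chain the excision triangles (using that the set of relevant $[b']$ is controlled by $\mu$) while keeping track of the tensoring with $\IC_{\mu}$ and of the torsor passage from $\Hecke_{b,b'}^{\leq \mu}$ to $\cT_{b,b'}^{\leq \mu}$ that produces the coefficient $\ola{t}_{b,b'}^{*}\sF_{\varphi}$ appearing in Theorem \ref{thm:van}. Modulo these bookkeeping issues, the rest of the argument is a clean combination of Lemma \ref{lem:E1nE2s} and Theorem \ref{thm:van}.
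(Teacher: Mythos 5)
Your proposal is correct and follows essentially the same route as the paper: pull back along $\tilde{y}_b$, restrict to the semi-stable locus using $\supp\sF_{\varphi}\subset\Bun_{G,\ol{\bF}_q}^{\mathrm{ss}}$, identify the possible basic $[b']$ via Lemma~\ref{lem:E1nE2s}, and invoke Theorem~\ref{thm:van}. The one place where the paper is lighter than your write-up is the ``stratification-excision'' step you flag as a possible obstacle: it is not needed. Since $j_{\mathrm{ss}}$ is an open immersion, $\sF_{\varphi}=j_{\mathrm{ss},!}j_{\mathrm{ss}}^{*}\sF_{\varphi}$, so after pulling back one is immediately reduced to computing compactly supported cohomology on $\cT_{b,\bC_p^{\flat}}^{\leq\mu,\mathrm{ss}}$, and Lemma~\ref{lem:E1nE2s} shows that this space is a \emph{disjoint union} $\coprod_{N\in 2\bZ}\cT_{b,b_1^N,\bC_p^{\flat}}^{\leq\mu}$ of open--closed pieces (indexed by the degree, which is locally constant), not merely a locally closed stratification; in particular the odd-degree basic stratum $\cO(m+\frac12)$ is ruled out. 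So there are no excision triangles to chain, and Theorem~\ref{thm:van} applies directly to each component with $L=T$, exactly as you identified.
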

\begin{proof}
Take a non-basic element $[b] \in B(G)$. 
Then it suffices to show that 
$p_b^* y_b^* \ora{h}_{\natural} \ola{h}^* \sF_{\varphi} =0$, 
where $p_b$ is defined at \eqref{eq:pb}. 
We consider the following cartesian diagram: 
\begin{equation*}\label{eq:TbssTb}
 \xymatrix{
 \cT_{b,\bC_p^{\flat}}^{\leq \mu,\mathrm{ss}} 
 \ar@{->}[r] \ar@{->}[dd]_-{\ola{h}_{b,\mathrm{ss}}} & 
 \cT_{b,\bC_p^{\flat}}^{\leq \mu} \ar@{->}[r] \ar@{->}[d] & 
 \Spa ( \bC_p^{\flat} )  
 \ar@{->}^-{y_b \circ p_b}[d] \\
 & \Hecke^{\leq \mu} 
 \ar@{->}^-{\overrightarrow{h}}[r] 
 \ar@{->}^-{\overleftarrow{h}}[d] & 
 \Bun_{G} \times \Div_X^1 \\ 
 \Bun_{G}^{\mathrm{ss}} \ar@{->}[r]^-{j_{\mathrm{ss}}} & 
 \Bun_{G}. &  
 }
\end{equation*}
Let 
$\ola{h}_{b} \colon 
 \cT_{b,\bC_p^{\flat}}^{\leq \mu} 
 \to \Bun_{G}$ 
be the morphism which appears in the above diagram. 
Then it suffices to see that 
\[
 H_{\mathrm{c}}^i \bigl( \cT_{b,\bC_p^{\flat}}^{\leq \mu}, 
 {\ola{h}}_{b}^* \sF_{\varphi} 
 \bigr) =0. 
\]
On the other hand, we have 
\[
 H_{\mathrm{c}}^i \bigl( \cT_{b,\bC_p^{\flat}}^{\leq \mu}, 
 {\ola{h}}_{b}^* \sF_{\varphi} 
 \bigr) = 
 H_{\mathrm{c}}^i \bigl( 
 \cT_{b,\bC_p^{\flat}}^{\leq \mu,\mathrm{ss}}, 
 {\ola{h}}_{b,\mathrm{ss}}^* j_{\mathrm{ss}}^* 
 \sF_{\varphi} 
 \bigr) 
\]
by 
$\sF_{\varphi} = j_{\mathrm{ss},\natural} 
 j_{\mathrm{ss}}^* \sF_{\varphi}$. 
We have a decomposition 
\[
 \cT_{b,\bC_p^{\flat}}^{\leq \mu, \mathrm{ss}} = 
 \coprod_{N \in 2 \bZ} 
 \cT_{b,b_1^N,\bC_p^{\flat}}^{\leq \mu}
\]
by Lemma \ref{lem:E1nE2s}. 
Hence, we have 
\[
  H_{\mathrm{c}}^i \bigl( 
 \cT_{b,\bC_p^{\flat}}^{\leq \mu, \mathrm{ss}}, 
 {\ola{h}}_{b,\mathrm{ss}}^* j_{\mathrm{ss}}^* 
 \sF_{\varphi} 
 \bigr) =0 
\]
by Theorem \ref{thm:van}. 
\end{proof}

\begin{thm}
Then we have 
\[
 \ora{h}_{\natural}(\ola{h}^{*}\sF_{\varphi} \otimes \IC_{\mu}') 
 \cong \sF_{\varphi} \boxtimes \varphi . 
\]
\end{thm}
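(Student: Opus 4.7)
The strategy is to combine the support bound from Proposition \ref{prop:supp} with the non-abelian Lubin-Tate input from Proposition \ref{prop:NALT}, exploiting the happy accident that for $n=2$ the congruence hypothesis in Proposition \ref{prop:NALT} is vacuous.

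First, I would use Proposition \ref{prop:supp} to note that both sides of the desired equality are supported on $\Bun_{G,\ol{\bF}_q}^{\mathrm{ss}} \times_{\bF_q} \Div_X^1$. Indeed, the support of $\sF_\varphi \boxtimes \varphi$ is contained in this locus by construction, and Proposition \ref{prop:supp} gives the same for $\ora{h}_!(\ola{h}^*\sF_\varphi \otimes \IC_\mu)$. It is therefore enough to check the equality after restriction to this open substack, which decomposes along the Kottwitz invariant as
\[
 \Bun_{G,\ol{\bF}_q}^{\mathrm{ss}} \times_{\bF_q} \Div_X^1 =
 \coprod_{N \in \bZ} \Bun_{G,\ol{\bF}_q}^{N,\mathrm{ss}} \times_{\bF_q} \Div_X^1,
\]
using $\pi_1(G)_\Gamma \cong \bZ$ for $G = \iGL_2$, with the stratum labelled by $N$ represented by the basic element $b = b_1^N$.

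Next, on the $N$-th stratum, the morphism $y_b$ presents $\Bun_{G,\ol{\bF}_q}^{N,\mathrm{ss}} \times_{\bF_q} \Div_X^1$ as the quotient stack $[\Div^1_{X,\ol{\bF}_q} / \ul{J_b(E)}]$ (using that $\wt{J}_b = \ul{J_b(E)}$ for basic $b$, from Section \ref{sec:Gbun}). Giving a Weil sheaf on this stratum is thus equivalent to giving a $J_b(E)$-equivariant Weil sheaf on $\Div^1_{X,\ol{\bF}_q}$, and equality of two such sheaves can be tested after pullback along $y_b$ together with its $\ul{J_b(E)}$-equivariant structure and Weil descent datum. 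Since $n=2$, every integer $N$ satisfies $N \equiv 0$ or $1 \pmod{n}$, so the hypothesis of Proposition \ref{prop:NALT} is automatic for every basic class, and that proposition delivers
\[
 y_b^*\, \ora{h}_!(\ola{h}^*\sF_\varphi \otimes \IC_\mu) = y_b^*(\sF_\varphi \boxtimes \varphi)
\]
stratum by stratum. Assembling these isomorphisms over $N \in \bZ$ yields the theorem.

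The main subtle point, and essentially the only thing to verify beyond invoking the inputs, is that the equality produced by Proposition \ref{prop:NALT} is $J_b(E) \times W_E$-equivariantly compatible with the Weil descent data on both sides, so that it genuinely descends to an equality of Weil sheaves on $\Bun_{G,\ol{\bF}_q}^{N,\mathrm{ss}} \times_{\bF_q} \Div_X^1$ rather than merely on the presenting space. This compatibility is already built into the construction of $\sF_\varphi$ in Section \ref{sec:Fconj} via the Weil descent datum $w_b$ on $\sT_b$ and into the $J_b(E) \times W_E$-equivariance of the isomorphism $\cM_{b,b'}^{\leq \mu} \simeq \cM_{\mathrm{LT}}^{\infty,\diamond}$ in Lemma \ref{lem:Mbb'LT} that drives the proof of Proposition \ref{prop:NALT}; it only needs to be recorded cleanly. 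No new geometric or representation-theoretic input is required beyond what has already been assembled in the previous sections.
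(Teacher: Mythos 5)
Your proposal is correct and follows exactly the same two-step strategy as the paper: reduce to the semi-stable locus via Proposition \ref{prop:supp}, then handle each basic stratum via Proposition \ref{prop:NALT}, using that for $n=2$ every integer $N$ satisfies $N \equiv 0$ or $1 \pmod 2$. You spell out the stratum-by-stratum decomposition and the equivariance bookkeeping more explicitly than the paper's terse two-sentence proof, but the underlying argument is identical.
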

\begin{proof}
By Proposition \ref{prop:supp}, 
it suffices to show the equality on 
$\Bun_{G}^{\mathrm{ss}} 
 \times \Div_X^1$. 
The equality on 
the semi-stable locus follows from 
Proposition \ref{prop:NALT}, 
since we have $N \equiv 0,1 \mod 2$ 
for any integer $N$. 
\end{proof}

\noindent
Ildar Gaisin\\ 
Department of Mathematics, HSE University, Usacheva str. 6, 119048 Moscow, Russia\\ 
igaisin@hse.ru

\vspace*{10pt}

\noindent
Naoki Imai\\ 
Graduate School of Mathematical Sciences, 
The University of Tokyo, 3-8-1 Komaba, Meguro-ku, 
Tokyo, 153-8914, Japan\\ 
naoki@ms.u-tokyo.ac.jp 

\end{document}